\tikzset{node distance=2cm, auto}
\author{Amir Akbar Tabatabai}
\theoremstyle{plain} 
\newtheorem{thm}{Theorem}[section]
\newtheorem{lem}[thm]{Lemma}
\newtheorem{cor}[thm]{Corollary}
\theoremstyle{definition}
\newtheorem{dfn}[thm]{Definition}
\newtheorem{exam}[thm]{Example}
\newtheorem{rem}[thm]{Remark}
\def\int{\mathrm{int}}
\def\exp{\mathrm{exp}}
\def\CHaus{\mathrm{CHaus}}
\def\Top{\mathrm{Top}}
\def\S4{\mathrm{S4}}
\def\Hom{\mathrm{Hom}}
\def\Set{\mathrm{Set}}
\begin{document}
\title{Geometric Modality and Weak Exponentials} 

\author{Amirhossein Akbar Tabatabai \footnote{The author is supported by the ERC Advanced Grant 339691 (FEALORA)}\\
Institute of Mathematics\\
Academy of Sciences of the Czech Republic\\
tabatabai@math.cas.cz}

\date{}

\maketitle

\begin{abstract}
The intuitionistic implication and hence the notion of function space in constructive disciplines is both non-geometric and impredicative. In this paper we try to solve both of these problems by first introducing weak exponential objects as a formalization for predicative function spaces and then by proposing modal spaces as a way to introduce a natural family of geometric predicative implications based on the interplay between the concepts of time and space. This combination then leads to a brand new family of modal propositional logics with predicative implications and then to topological semantics for these logics and some weak modal and sub-intuitionistic logics, as well. Finally, we will lift these notions and the corresponding relations to a higher and more structured level of modal topoi and modal type theory.
\end{abstract}

\section{Introduction}
Intuitionistic logic appears in different many branches of mathematics with many different and interesting incarnations. In geometrical world it plays the role of the language of a topological space via topological semantics and in a higher and more structured level, it becomes the internal logic of any elementary topoi. On the other hand and in the theory of computations, the intuitionistic logic shows its computational aspects as a method to describe the behavior of computations using realizability interpretations and in category theory it becomes the syntax of the very central class of Cartesian closed categories. In all of these incarnations though, we can name some unexpected behaviors that relates to the definition of the intuitionistic implication and as we believe its impredicative definition as a part of the well-known BHK interpretation. In the rest of this introduction we will explain what these behaviors are, why they are unexpected and how we can resolve the situation.\\

Let us begin our journey by the interesting case of geometry. Since Tarski's seminal work \cite{Mc}, it has been well-known that the intuitionistic logic can be interpreted, in a sound and complete way, in topological spaces. In this sense, it is meaningful to assume that in contrast to the fact that the classical logic is the logic of sets, the intuitionistic logic should be considered as \textit{the} logic of the notion of space. This assumption becomes more evident when we see the appearance of intuitionistic logic in the generalized geometrical discourses such as the topos theory, or more recently, the homotopy theory. However, the situation is not as clear as one expects. A geometrical concept is not just anything that appears in all the geometrical situations but a concept which is preserved by the morphisms of the discourse and this is exactly where some problems come to the scene. Let us explain the phenomenon more precisely using the simpler case of topological spaces as the topological models of the propositional intuitionistic logic. The definition of the topological interpretation is the following: Let $(X, \tau)$ be a topological space and $V$ an interpretation which assigns open subsets of $X$ to the atomic formulas in the language. Next, extend $V$ to the class of all formulas in the following natural way: Interpret conjunction, disjunction, $\bot$ and $\top$ as the intersection, union, the empty subset and the whole subset $X$, respectively. And interpret the implication $A \rightarrow B$ as $int(V(A)^c \cup V(B))$. From the intuition that we have explained already, it is natural to interpret $V(C)$ as the geometrical content of the sentence $C$ in the topological space $(X, \tau)$ and then expect that all continuous functions preserve this geometrical content. More precisely, we expect that if $f:(X, \tau) \to (Y, \eta)$ is continuous, then $f^{-1}$ sends the geometrical content of any formula $C$ in $(Y, \eta)$ to the geometrical content of the same formula in $(X, \tau)$. If we apply this idea just to a single logical constant, it implies that the inverse image function should preserve all the logical constants in the language. And using the definition of the interpretation, it is pretty clear that for all the logical constants, except for the implication, this is the case. However, the case of the implication is totally different and it is in fact the source of the problems here. Let us explain this unexpected behavior by an example: let $f: \mathbb{R} \to \mathbb{R}$ as the following continuous function:
\[
f(x)=\begin{cases}
-x-1 & x \in (-\infty, -1)\\
0 & x \in [-1, -1]\\
x+1 & x \in (1, +\infty)\\
\end{cases} 
\]
and consider $U=(0, + \infty)$. Then, it is easy to compute that $f^{-1}(U \rightarrow \bot)=\emptyset$ and $f^{-1}(U) \rightarrow f^{-1}(\bot)=(-1, 1)$ which means 
\[
f^{-1}(U \rightarrow \bot) \neq f^{-1}(U) \rightarrow f^{-1}(\bot).
\]
Therefore, we can conclude that the geometric morphisms do not preserve the intuitionistic implication which implies that this constant is not genuinely geometric and we have to refine its definition if we want to have any meaningful geometric logic. There are different approaches to deal with this problem. The first one is the approach of geometric logic and it is based on the total elimination of the implication constant. From the logical point of view this approach seems extremely limiting and it leaves the logic without its usual power to formalize mathematical theories. However, in the real situations arising in topos theory it actually leads to deep understanding of the geometrical content of mathematical theories. The other approach is to keep the implication but weaken its power in a way that the geometric maps or at least some natural classes of them preserve this new implication. To implement this idea, we will extend the coherent logic with an implication which is weaker than the usual intuitionistic implication and we will show that it gets preserved under a natural class of continuous functions.\\

So far, we have investigated the geometrical case of implication. Let us now focus on the the second aspect of intuitionistic logic as a meta-theory for the notion of computability or more generally as a calculus of constructions. To do so, consider the BHK interpretation as the informal canonical interpretation of the intuitionistic constants and let us explain how it works. For simplicity, interpret any formula $A$ as the set of all of its proofs and write $a \in A$ as the proposition ``$a$ is a proof for $A$". Then:\\

For conjunction we have:
\[
A \wedge B=\{(b, c)| b \in A \; \text{and} \; c \in B\}
\]
which means that a construction $a$ is a proof for $A \wedge B$ iff $a$ equals to a pair $(b, c)$ where $b$ is a proof for $A$ and $c$ is a proof for $B$.\\
Similarly, for the disjunction we have
\[
A \vee B=\{(i, c)| \text{if} \; i=0 \; \text{then} \; b \in A \; \text{and} \; \text{if} \; i=1 \; \text{then} \; c \in B\}
\]
which means that $a$ is a proof for $A \vee B$ if $a$ is a pair that the first component says which of $A$ or $B$ is proved and the second component provides the corresponding proof.\\
And for the implication we have
\[
A \rightarrow B=\{f| \forall x \in A \rightarrow f(x) \in B \}
\]
which means that $f$ is a proof for $A \rightarrow B$ if $f$ provides a method to transform \textit{any} proof of $A$ to a proof of $B$.\\

The first and the second parts are simple and acceptable. The definition is recursive and to define the notion of proof, it is enough to just refer locally to the structure of the possible proof and the recursive notion of ``being a proof" that we are defining. But in the case of implication, the definition uses a universal quantifier on all possible constructions, including the proofs of $A \rightarrow B$ themselves, that we are defining at the moment. This definition in this present form is clearly impredicative. It is possible to see this impredicativity in a more technical way and in the concrete case of computability theory. To do so, consider the sister notion of realizabality and the case of realization of the elimination rule of the implication. To realize this rule, we clearly need the universal Turing machine to apply any arbitrary Turing machine, including itself, to all the possible inputs. There is no need to explain how and why this should be considered self-referential and hence impredicative. (For a historical and philosophical discussion on impredicativity of intuitionistic implication, See \cite{Van}.)\\

To deal with this impredicative definition, one reasonable approach is using the notion of time to control the order of constructing proofs to keep everything predicative. Roughly speaking, after adding time to the game we can revise the BHK interpretation in the following way: $f$ is a proof of $A \rightarrow B$ at the time instance $n$, if for any proof $x$ of $A$ appearing in time $m$ \textit{after} $n$, $f(x)$ is a proof for $B$ at the same time $m$. As it may be clear even from this rough explanation, formalizing this concept needs considering presheaves over some poset to capture the growing constructions over the structure of time. Moreover, and more importantly, we also need a suitable functor on our constructions to take care of the modality of time or more precisely the preposition ``after" in the definition. We will call these topoi, modal topoi and we will explain them in the last section of this paper.\\

And finally and as the third example, consider the Cartesian closed categories and their relationship with simply typed lambda calculus which can be considered as the higher and more structured propositional intuitionistic logic. It seems that these categories formalize the intuitive idea of being closed in category theory in the sense that they have an internal hom functor to simulate the behavior of their external Hom functor. The problem here is also the elimination rule which is just the evaluation map in this discourse. Consider the definition of the exponential objects. Intuitively, $B^A$ plays the role of  $\Hom(A, B)$ internally. Then the question is that what does it mean to have an arrow $ev: A \times B^A \to B$. Intuitively, it means that it applies all morphisms in $B^A$ to the elements of their source $A$. But in the categorical discipline we can not and should not refer to the elements of an object, first because it contradicts with the conceptual foundation of category theory and second, because it simply is meaningless in the case that the category is not a category of structured sets. In the best case, we can at most use generalized elements for this morphism and talk about something like $A^1 \times B^A \to B^1$ or more generally $A^C \times B^A \to B^C$. One can try to justify the evaluation map using these generalized elements and another assumption that says $A \simeq A^1$. What the assumption says is that the object $A$ has the same role in the category as $A^1$. It can happen in some cases that the category satisfies this condition as an extra feature but it is definitely not related to the notion of closedness. To have another intuition, think about it in the following way: $B^A$ internalizes the Hom structure of the category but this internalization is not necessarily related to the lower level of the real objects. Therefore, we can claim that the way that Cartesian closed categories capture the notion of closedness is not faithful and it has more hidden assumptions than what it reveals at the first glance. The solution for this problem is formalizing a new weaker type of exponential objects which mimic the behavior of the external Hom functor in a more faithful manner. This is what we call weak exponentials and we will define in the first following section.
\section{Weak Exponentials}
Let us first define the notion of a weak exponential. As we explained in the Introduction, it is supposed to formalize the concept of closedness without conceptual referring to any assumed internal structure of objects. The definition is a variant of Eilenberg-Kelly's definition of closed categories \cite{Eil}.
\begin{dfn}\label{t1-1}
Let $\mathcal{C}$ be a category with finite products and terminal object. A functor $[-, -] : \mathcal{C}^{op} \times \mathcal{C} \to \mathcal{C}$ together with the following data 
\begin{itemize}
\item[$(i)$]
a transformation $j_X : 1 \to [X, X]$, extranatural in $X$,
\item[$(ii)$]
a transformation $L_{YZ}^X : [X,Y] \times [Y,Z] \to [X,Z]$ natural in $X$ and $Z$ and extranatural in $Y$
\end{itemize}
is called weak exponential or weak internal hom if the following diagrams commute:

\begin{itemize}
\item[$(i)$]
For every $X$, and $Y$, $j_X$ plays the right unit role for the composition $L$:\\

\begin{center}
\begin{tikzpicture}
\node (C) {$1 \times [X, Y]$};
  \node (P) [below of=C] {$[X, X] \times [X, Y]$};
  \node (Ai) [right of=P, node distance=4cm] {$[X, Y]$};
  \draw[->] (C) to node [swap] {$\langle j_X, id_{[X, Y]} \rangle$} (P);
  \draw[->] (P) to node [swap] {$L^X_{XY}$} (Ai);
  \draw[->] (C) to node {$p_1 \circ id_{[X, Y]}$} (Ai);
\end{tikzpicture}
\end{center}

\item[$(ii)$]
For every $X$, and $Y$, $j_Y$ plays the left unit role for the composition $L$:\\

\begin{center}
\begin{tikzpicture}
\node (C) {$[X, Y] \times 1$};
  \node (P) [below of=C] {$[X, Y] \times [Y, Y]$};
  \node (Ai) [right of=P, node distance=4cm] {$[X, Y]$};
  \draw[->] (C) to node [swap] {$\langle id_{[X, Y]}, j_Y \rangle$} (P);
  \draw[->] (P) to node [swap] {$L^Y_{XY}$} (Ai);
  \draw[->] (C) to node {$p_0 \circ id_{[X, Y]}$} (Ai);
\end{tikzpicture}

\end{center}

\item[$(iii)$]
$L$ is associative, i.e. for every $X$, $Y$, $Z$ and $W$:\\

\begin{center}
\begin{tikzpicture}
  \node (P) {$[X, Y] \times [Y, Z] \times [Z, W]$};
  \node (B) [right of=P, node distance=6cm] {$[X,Z] \times [Z, W]$};
  \node (A) [below of=P] {$[X, Y] \times [Y, W]$};
  \node (C) [below of=B] {$[X, W]$};
  \draw[->] (P) to node {\tiny{$\langle  L^Y_{XZ} \langle p_0, p_1 \rangle, p_2 \rangle$}} (B);
  \draw[->] (P) to node [swap] {\tiny{$\langle p_0, L^Z_{YW} \langle p_1, p_2 \rangle \rangle$}} (A);
  \draw[->] (A) to node [swap] {\tiny{$L^Y_{XW}$}} (C);
  \draw[->] (B) to node {\tiny{$L^Z_{XW}$}} (C);
\end{tikzpicture}
\end{center}

\end{itemize} 
The category $\mathcal{C}$ equipped with a weak exponential is called strong. Moreover, if the map $\gamma :\mathcal{C}(X,Y) \to \mathcal{C}(1,[X,Y])$ defined by $f \mapsto [1,f](j_X)$ is a bijection, the strong category $\mathcal{C}$ is called weakly-closed. If there exists also a natural transformation $i_X: X \to [1, X] $, the category is called a Curry category and if $i$ is a natural isomorphism, it is called a closed category.
\end{dfn}
It is clear that being strong in the sense of Definition \ref{t1-1} means that the category is strong enough to internalize its Hom structure while weakly closedness is just the condition that the category is actually internalizes exactly the Hom structure and it does not add anything else to it.\\

There are some natural concrete examples of weak exponentials.
\begin{exam}\label{t1-2}
Let $\mathcal{C}$ be a Cartesian closed category and $F : \mathcal{C} \to \mathcal{C}$ be an arbitrary functor. Define $[X, Y]=\exp(FX, FY)$ and $[f, g]=\exp(Ff, Fg)$ and use $j$ and $L$ as the original natural transformations available in $\mathcal{C}$. 
\end{exam}
\begin{exam}\label{t1-3}
Consider the category $\Top$ of topological spaces and continuous functions. We know that $\Top$ is not Cartesian closed. However, there exists a natural canonical weak exponential object inside the category which internalizes some part of the structure of the category itself. Suppose $\Top_{\CHaus}$ is the subcategory of all compact Hausdorff spaces, $\beta : \Top \to \Top_{\CHaus}$ is a Stone-Cech compactification functor and $U:   \Top_{\CHaus} \to \Top$ is the forgetful functor. We know that $\beta \dashv U$. Since all compact Hausdorff spaces are exponentiable we can define $[X, Y]=U[(\beta Y)^{\beta X}]$ and consider $i_X: X \to U(\beta X)^1=[1, X]$ as the composition of the unit of the compactification adjunction and the isomorphism between $\beta X$ and $(\beta X)^1$. Hence $\Top$ with this structure is a Curry category.
\end{exam}
\begin{exam}\label{t1-4}
Let $\mathbb{C}$ be a small category and $I: \mathbb{C} \to \mathbb{C}$ be an arbitrary functor. Then consider the category of presheaves $\mathcal{E}=\Set^{\mathbb{C}^{op}}$ and $[-, -]: \mathcal{E}^{op} \times \mathcal{E} \to \mathcal{E}$ as 
\[
[E, F](c)=\Hom(\mathbf{y}(I(c)) \times E, F)
\]
and $[\alpha, \beta](c)(f)=\beta f (id \times \alpha)$. It is not hard to observe that this functor is a weak exponential using the most natural $j$ and $L$. For some concrete useful instance of this presheaf construction, put $\mathbb{C}=(\mathbb{N}, \leq)$ as the usual poset category of natural numbers and $I: \mathbb{N} \to \mathbb{N}$ as the functor $i(n)=n+1$. 
\end{exam}
\section{Modal Spaces}
In this section we will define the notion of modal space as an abstract setting which provides a natural candidate for predicative implication. For this purpose, let us begin with a concrete model to explain how the notion of time can be effective in defining predicative implications and then we will generalize the setting to the more abstract situations.\\

Assume that $(\mathbb{N}, \leq)$ is the usual set of natural numbers with its usual order as the formalization of the notion of time and also assume that we are interested in the class of all increasing functions $f: \mathbb{N} \to \{0, 1\}$ as the class of variable statements which their truth values vary in time. First of all, note that this situation is a loose version of what we explained in the Introduction in which we replaced constructions by truth values. The reason is that this truth value type of investigation is more appropriate in the case of propositional statements that just have truth value. To speak about constructions we have to go one level up to work with simply typed lambda calculus which is actually what we intend to do in the last section of this paper. Secondly, it is pretty clear that there exist natural interpretations for conjunction and disjunction in this discourse. It is enough to interpret conjunction as the the pointwise minimum of functions and disjunction as the pointwise maximum. For implication, if we follow the intuition that we explained before, we will have the following definition: $[f \rightarrow g](n)=1$ iff for all $x>n$, $f(x) \leq g(x)$ which means that the statement $f \rightarrow g$ is true at $n$ if for every instance $x$ \textit{after} $n$, $f(x)$ implies $g(x)$. It seems that we have had a satisfactory formalization of all propositional constants and specifically the predicative implication. But there is a problem with this formalization. If we investigate our construction carefully, we will observe that what we used was just the preposition \textit{after} and not the whole power of the concept of time which more or less determines every details of the whole setting and then leaves no room for generalization. Therefore, the natural question now is that is there any generalized setting in which we can implement the construction of implication? In other words, is it possible to find an abstract version of the preposition \textit{after} without referring to the notion of time? The answer to this question is positive and we will explain how.\\

First let us import some topological language to simplify the situation. Put the order topology on the ordered structure $(\mathbb{N}, \leq)$ in which opens are just the upward-closed subsets. Then do the same thing for the set of truth values $\{0, 1\}$ to have the Sierpinski space. Then an increasing function $f: \mathbb{N} \to \{0, 1\}$ is nothing but a continuous function and since in this situation a continuous function is uniquely determined with its support $f^{-1}(1)$, we can change our perspective to consider open subsets instead of increasing functions and interpret any open $U$ as the points in time that a sentence is true. Now the conjunction and the disjunction become intersection and union which are obviously simpler than their original form. For implication though, we will have the following: $n \in U \rightarrow V$ iff for all $x> n$, if $x \in U$ then $x \in V$. If we represent any element $m$ by its canonical open $U_m=\{y | y \geq m \}$ then we have
$U_n \subseteq U \rightarrow V$ iff for all $x>n$, $U_x \cap U \subseteq V$. As we guessed before, the only thing to do now is just finding a formalization for $x>n$. For this purpose, define the operator $J$ on open subsets in the following way: 
\[
JW=\{z| \exists y \leq z \wedge y \in W \}
\]
if we come back to the original definition with functions it is equivalent to define $Jf$ as $Jf(n)=max\{f(m)| m \leq n\dot{-}1\}=f(n\dot{-}1)$. Intuitively, $Jf$ is just the pulling back version of $f$ over the line of time. Now with this operator at hand, we know that $x>n$ iff $U_x \subseteq JU_n$. Therefore, it is easy to see that we can define $U \rightarrow V$ as 
\[
\bigcup\{W | JW \cap U \subseteq V\}.
\]
This new representation of the predicative implication can be easily lifted to any topological space and since we did not mention the internal structure of opens it is possible to generalize it even to locales. But what about the operator $J$ which is responsible for the effect of time? It is clear that we have to put some conditions on this operator to have a meaningful interpretation. First note that $J$ is increasing. Second and more importantly, $J$ preserves unions. The reason is that computing the pointwise maximum of functions and also computing the pulling back operator $J$, which is based on computing maximums, commute. It is also possible to prove that $J$ preserves intersections in our setting with linearly ordered time or even in the case that we use trees instead of lines. However, in the more general case this is not necessarily true and as the following investigations show it is not even needed. Finally, our $J$ has the property $JU \subseteq U$ which means that the notion of truth is cumulative and preserved by time. However, we also relax this condition in our general definition to keep the commitments of our topological semantics as minimal as possible. But we will investigate spaces with this condition which we will call temporal spaces later. Now we have:
\begin{dfn}\label{t2-1}
A pair $(X, J)$ is called a modal space if $X$ is a locale and $J: X \to X$ is a monotone function which preserves all joins.
\end{dfn}
\begin{rem}
Note that if we interpret the notion of topology as a covering property, as it is usual in formal topology, then the condition of being join preserving means that $J$ respects the topology because if $\bigcup_i U_i=U$ then $\bigcup_i J(U_i)=J(U)$.
\end{rem}
Let us illuminate the above definition by some examples:
\begin{exam}\label{t2-2}
Assume that $(X, \tau)$ is a topological space and $f: X \to X$ is a continuous function, then $(X, \tau, f^{-1})$ is a modal space. The reason simply is that $f^{-1}$ is monotone and union preserving.
\end{exam}
The following two are the generalizations of the setting that we began with. They will play an important role in the rest of the paper.
\begin{exam}\label{t2-3}
Assume that $(W, R)$ is a relational frame such that $R \subseteq W \times W$ and $J: P(W) \to P(W)$ as $J(U)=\{x | \exists y\; R(y, x) \wedge y \in U \}$. $J$ is trivially monotone and join preserving. Therefore, $(P(W), J)$ is a modal space. 
\end{exam}

\begin{exam}\label{t2-4}
Assume that $(W, R)$ is a relational frame such that $R \subseteq W \times W$ is a transitive relation. Then define $\tau$ as the set of all upward closed subsets of $W$. $\tau$ is trivially a topology. Define $J: \tau \to \tau$ as $J(U)=\{x | \exists y\; R(y, x) \wedge y \in U \}$. First of all, $J$ sends any upward-closed $U$ to an upward closed set since $R$ is transitive. Secondly, $J$ is trivially monotone and join preserving. Therefore, $((W, \tau), J)$ is a modal space. 
\end{exam}
The following shows that any modal space is equipped with its natural weak implication:
\begin{thm}\label{t2-5}
Let $(X, J)$ be a modal space. It is possible to define $\rightarrow : X \times X \to X$ such that for any $a, b, c \in X$, $Jc \wedge a \leq b$ iff $c \leq a \rightarrow b$. In categorical terms, $J(-) \times a$ is a left adjoint to $a \rightarrow (-)$ for any $a \in X$.
\end{thm}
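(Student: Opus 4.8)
The plan is to define the weak implication by the formula suggested in the motivating discussion, namely
\[
a \rightarrow b := \bigvee \{ c \in X \mid Jc \wedge a \leq b \},
\]
and then verify the adjunction directly. Since $X$ is a locale, arbitrary joins exist, so this is well-defined, and functoriality in the two arguments will follow once the adjunction is established (or can be checked by hand: $a \rightarrow (-)$ is monotone because enlarging $b$ enlarges the indexing set, and $(-) \rightarrow b$ is antitone because enlarging $a$ shrinks it).

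The core of the argument is the biconditional $Jc \wedge a \leq b \iff c \leq a \rightarrow b$. For the direction ($\Rightarrow$): if $Jc \wedge a \leq b$ then $c$ is one of the elements in the set being joined, so trivially $c \leq a \rightarrow b$. For the direction ($\Leftarrow$): suppose $c \leq a \rightarrow b = \bigvee_{i} c_i$ where each $c_i$ satisfies $Jc_i \wedge a \leq b$. Then, using that $J$ preserves joins, $Jc \leq J(\bigvee_i c_i) = \bigvee_i Jc_i$, and by the frame distributivity law of the locale $X$ we get $Jc \wedge a \leq (\bigvee_i Jc_i) \wedge a = \bigvee_i (Jc_i \wedge a) \leq b$, since each disjunct is $\leq b$. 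Here I would first establish the intermediate fact that $J(a \rightarrow b) \wedge a \leq b$, i.e. that $a \rightarrow b$ itself lies in (the downward closure of) the indexing family, which is exactly the $c = a \rightarrow b$ instance of this computation; then the general case follows by monotonicity of $J$ and $\wedge$.

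The two ingredients doing the real work are precisely the two defining properties of a modal space together with the locale axiom: monotonicity of $J$ is needed to pass from $c \leq a \rightarrow b$ to $Jc \leq J(a\rightarrow b)$, join-preservation of $J$ is needed to distribute $J$ over the join defining $a \rightarrow b$, and infinite distributivity of $\wedge$ over $\bigvee$ in the locale is needed to pull the meet with $a$ inside the join. I expect no serious obstacle: the only subtlety worth stating carefully is that the indexing family $\{c \mid Jc \wedge a \leq b\}$ need not be closed under the join operation a priori, which is why one must invoke join-preservation of $J$ rather than simply asserting that $a \rightarrow b$ is itself a member — it is a member, but that is a (small) consequence of the calculation rather than an immediate observation. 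The categorical reformulation then is just the statement that the displayed biconditional, natural in $c$, is exactly the adjunction $J(-) \wedge a \dashv (a \rightarrow -)$.
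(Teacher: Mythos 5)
Your proposal is correct and matches the paper's own proof essentially verbatim: the same definition $a \rightarrow b = \bigvee\{d \mid Jd \wedge a \leq b\}$, the trivial forward direction, and the converse via monotonicity of $J$, join-preservation, and the locale distributivity law. The minor reorganization through the intermediate fact $J(a\rightarrow b)\wedge a \leq b$ does not change the argument.
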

\begin{proof}
Define $a \rightarrow b=\bigvee \{d | Jd \wedge a \leq b \}$. It is clear that if $Jc \wedge a \leq b$ then by definition $c \leq a \rightarrow b$. For the converse, if $c \leq a \rightarrow b$, then $c \leq \bigvee \{d | Jd \wedge a \leq b \}$ hence $Jc \leq J\bigvee \{d | Jd \wedge a \leq b \}$ since $J$ preserve all joins, $Jc \leq \bigvee \{Jd | Jd \wedge a \leq b \}$. Therefore, $Jc \wedge a \leq \bigvee \{Jd | Jd \wedge a \leq b \} \wedge a$ by distributivity, $Jc \wedge a \leq \bigvee \{Jd \wedge a | Jd \wedge a \leq b \} \leq b$. Hence $Jc \wedge a \leq b$. 
\end{proof}
This weak exponential is well-behaved under the application of continuous functions, i.e.,
\begin{thm}\label{t2-6}
Assume $X$ is a locale, $(Y, J)$ is a modal space and $f:X \to Y$ is a continuous surjection such that $f^{-1}$ has a left adjoint. Then there exists $I: X \to X$ such that $(X, I)$ is a modal space and $f^{-1}(a \rightarrow_J b)=f^{-1}(a) \rightarrow_I f^{-1}(b)$.
\end{thm}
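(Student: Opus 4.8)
The plan is to build $I$ as a composite $f^{-1}\circ J\circ g$, where $g$ is the left adjoint of $f^{-1}$, and then to establish the displayed identity without ever unfolding the joins that define $\rightarrow_J$ and $\rightarrow_I$: instead I would show that $f^{-1}(a)\rightarrow_I f^{-1}(b)$ and $f^{-1}(a\rightarrow_J b)$ obey the \emph{same} universal property, namely the adjunction of Theorem \ref{t2-5}, and conclude that they must be equal.

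First I would record what the hypotheses buy us for the frame map $f^{-1}\colon Y\to X$. Continuity of $f$ makes $f^{-1}$ preserve finite meets and arbitrary joins. Surjectivity of $f$ makes $f^{-1}$ injective, and an injective frame homomorphism is order-reflecting: from $f^{-1}(a)\le f^{-1}(b)$ we get $f^{-1}(a\wedge b)=f^{-1}(a)\wedge f^{-1}(b)=f^{-1}(a)$, hence $a\wedge b=a$, i.e.\ $a\le b$. Finally, the hypothesis that $f^{-1}$ has a left adjoint gives a monotone, join-preserving $g\colon X\to Y$ with $g(c)\le a\iff c\le f^{-1}(a)$. Now put $I:=f^{-1}\circ J\circ g$. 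Then $I$ is monotone and preserves all joins, since $g$ (a left adjoint), $J$ (Definition \ref{t2-1}) and $f^{-1}$ (continuity) all do; hence $(X,I)$ is a modal space, and by Theorem \ref{t2-5} it carries a weak implication $\rightarrow_I$.

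For the identity itself I would fix $a,b\in Y$ and prove that for every $c\in X$
\[
Ic\wedge f^{-1}(a)\le f^{-1}(b)\quad\Longleftrightarrow\quad c\le f^{-1}(a\rightarrow_J b).
\]
For this, note that since $f^{-1}$ preserves binary meets, $Ic\wedge f^{-1}(a)=f^{-1}(Jg(c))\wedge f^{-1}(a)=f^{-1}\big(Jg(c)\wedge a\big)$, so order-reflection of $f^{-1}$ rewrites the left inequality as $Jg(c)\wedge a\le b$; the adjunction of Theorem \ref{t2-5} in $(Y,J)$ turns this into $g(c)\le a\rightarrow_J b$, and $g\dashv f^{-1}$ turns that into $c\le f^{-1}(a\rightarrow_J b)$. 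On the other side, Theorem \ref{t2-5} applied in the modal space $(X,I)$ says $Ic\wedge f^{-1}(a)\le f^{-1}(b)\iff c\le f^{-1}(a)\rightarrow_I f^{-1}(b)$. Combining the two equivalences gives $c\le f^{-1}(a)\rightarrow_I f^{-1}(b)\iff c\le f^{-1}(a\rightarrow_J b)$ for all $c\in X$, and substituting each side in turn for $c$ yields $f^{-1}(a\rightarrow_J b)=f^{-1}(a)\rightarrow_I f^{-1}(b)$, as required.

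Essentially every step is a formal chase through adjunctions; the one spot where the geometric content really enters is the passage from surjectivity of $f$ to order-reflection of $f^{-1}$, which is what lets us strip $f^{-1}$ off both sides of the inequality. The left-adjoint hypothesis plays a double role — it is what makes $I$ preserve joins in the first place, and it is the bridge carrying $g(c)\le a\rightarrow_J b$ back down to $c\le f^{-1}(a\rightarrow_J b)$ — so I expect the only real decisions to be the choice of the composite $f^{-1}\circ J\circ g$ and the realization that comparing universal properties is cleaner than comparing the defining joins directly.
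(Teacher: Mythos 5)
Your proof is correct, and it reaches the identity by a different mechanism than the paper, even though the construction of $I$ is the same. The paper defines $I=f^{-1}Jf_!$ exactly as you do, but then unfolds both implications into their defining joins, $f^{-1}(a\rightarrow_J b)=\bigvee\{f^{-1}(c)\mid Jc\wedge a\le b\}$ and $f^{-1}(a)\rightarrow_I f^{-1}(b)=\bigvee\{d\mid Id\wedge f^{-1}(a)\le f^{-1}(b)\}$, and proves mutual domination by picking explicit witnesses ($d=f^{-1}(c)$ in one direction, $c=f_!(d)$ in the other), using continuity, injectivity of $f^{-1}$, and the unit/counit inequalities of $f_!\dashv f^{-1}$ along the way. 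You instead never open the joins: you show that both $f^{-1}(a\rightarrow_J b)$ and $f^{-1}(a)\rightarrow_I f^{-1}(b)$ classify the same condition $Ic\wedge f^{-1}(a)\le f^{-1}(b)$ via the adjunction of Theorem \ref{t2-5} on each side together with $f_!\dashv f^{-1}$ and order-reflection of $f^{-1}$ (which you correctly derive from injectivity plus meet preservation, the same surjectivity input the paper uses), and then conclude by antisymmetry. The ingredients are identical, but your route is a pure adjunction chase and buys conceptual economy: it makes visible that the statement is really the uniqueness of right adjoints, and it is exactly the argument the paper itself deploys one level up in Theorem \ref{t6-9} for modal topoi, where the poset equivalences become natural bijections of Hom-sets. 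What the paper's witness-level computation buys in return is a completely elementary verification inside the locale, with no appeal to the uniqueness-of-adjoints principle beyond what is recomputed by hand.
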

\begin{proof}
Define $I(x)=f^{-1}Jf_!(x)$ where $f_!$ is the left adjoint for $f^{-1}$. Since $f_!$ is a left adjoint, it is monotone and preserves all joins hence $I$ is also monotone and join preserving. Moreover, we know that $f^{-1}(a \rightarrow_J b)$ is equal to $f^{-1}(\bigvee \{c | Jc \wedge a \leq b \})$. Since $f$ is continuous, $f^{-1}$ preserves all joins and hence $f^{-1}(a \rightarrow_J b)$ is equal to $\bigvee \{f^{-1}(c) | Jc \wedge a \leq b \}$. Since $f$ is surjective, $f^{-1}$ is one to one, hence $Jc \wedge a \leq b$ iff $f^{-1}(Jc \wedge a) \leq f^{-1}(b)$. Since $f$ is continuous, this is equivalent to $f^{-1}(Jc) \wedge f^{-1}(a) \leq f^{-1}(b)$. On the other hand $f^{-1}(a) \rightarrow_I f^{-1}(b)$ is equivalent to $\bigvee \{d | Id \wedge f^{-1}(a) \leq f^{-1}(b) \}$. Hence we have to show that 
\[
\bigvee \{f^{-1}(c) | f^{-1}(Jc) \wedge f^{-1}(a) \leq f^{-1}(b) \}=\bigvee \{d | Id \wedge f^{-1}(a) \leq f^{-1}(b) \}.
\]
For any $c$, pick $d=f^{-1}(c)$. Since $d \leq f^{-1}(c)$, $f_!(d) \leq c$, hence $Jf_!(d) \leq Jc$ and then $f^{-1}Jf_!(d) \leq f^{-1}(Jc)$ therefore $Id \leq f^{-1}(Jc)$ which means that $f^{-1}(Jc) \wedge f^{-1}(a) \leq f^{-1}(b)$ implies $Id \wedge f^{-1}(a) \leq f^{-1}(b)$, therefore 
\[
\bigvee \{f^{-1}(c) | f^{-1}(Jc) \wedge f^{-1}(a) \leq f^{-1}(b) \} \leq \bigvee \{d | Id \wedge f^{-1}(a) \leq f^{-1}(b) \}.
\]
For the converse, if we have $d$ such that $Id \wedge f^{-1}(a) \leq f^{-1}(b)$, define $c=f_!(d)$. First of all $f^{-1}(Jc)=f^{-1}Jf_!(d)$. Secondly, since $f_!d \leq c$ we have $d \leq f^{-1}(c)$ which implies
\[
\bigvee \{f^{-1}(c) | f^{-1}(Jc) \wedge f^{-1}(a) \leq f^{-1}(b) \} \geq \bigvee \{d | Id \wedge f^{-1}(a) \leq f^{-1}(b) \}
\]
which completes the proof.
\end{proof}
The useful corollary of the Theorem \ref{t2-6} is:
\begin{cor}\label{t2-7}
Assume $(Y, \sigma)$ is an Alexandrov space, $J: \sigma \to \sigma$ is a monotone function and $f:(X, \tau) \to (Y, \sigma)$ is a continuous surjection. Then there exists $I: \tau \to \tau$ such that $(X, \tau, I)$ is a modal space and $f^{-1}(a \rightarrow_J b)=f^{-1}(a) \rightarrow_I f^{-1}(b)$.
\end{cor}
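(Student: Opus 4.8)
The plan is to deduce this from Theorem~\ref{t2-6} in two moves: first replace the merely monotone $J$ by a genuine modal-space operator on $(Y,\sigma)$ that induces the very same weak implication, and then observe that continuity together with the Alexandrov hypothesis already forces $f^{-1}$ to possess a left adjoint, so that Theorem~\ref{t2-6} applies directly. Throughout, for $x\in Y$ write $U_x$ for the least open set containing $x$; this exists precisely because $\sigma$ is Alexandrov, and it has the familiar properties that $x'\in U_x$ implies $U_{x'}\subseteq U_x$ and that every open set is the union of the $U_x$ it contains. Here $a\rightarrow_J b$ is to be read, as in Theorem~\ref{t2-5}, as $\bigcup\{d\in\sigma\mid Jd\cap a\subseteq b\}$.

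The first step is to prove, using only monotonicity of $J$, that
\[
a\rightarrow_J b=\bigcup\{U_x\mid J(U_x)\cap a\subseteq b\}.
\]
The inclusion $\supseteq$ is immediate from the definition; for $\subseteq$, if $Jd\cap a\subseteq b$ then every $x\in d$ satisfies $U_x\subseteq d$, hence $J(U_x)\cap a\subseteq Jd\cap a\subseteq b$, so $d$ is contained in the right-hand side. Next I would set $\hat J(U)=\bigcup_{x\in U}J(U_x)$. This is a union of open sets, so it lands in $\sigma$; it is plainly monotone; and it preserves arbitrary unions, since $\hat J(\bigcup_i U_i)=\bigcup_i\bigcup_{x\in U_i}J(U_x)=\bigcup_i\hat J(U_i)$. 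Thus $(Y,\hat J)$ is a modal space. Moreover $\hat J(U_x)=J(U_x)$ for every $x$: from $x\in U_x$ we get $J(U_x)\subseteq\hat J(U_x)$, and from $U_{x'}\subseteq U_x$ for every $x'\in U_x$ together with monotonicity of $J$ we get $\hat J(U_x)=\bigcup_{x'\in U_x}J(U_{x'})\subseteq J(U_x)$. Feeding this into the displayed formula, applied to $\hat J$ as well as to $J$, gives $a\rightarrow_{\hat J}b=a\rightarrow_J b$ for all $a,b\in\sigma$.

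It remains to produce a left adjoint for $f^{-1}:\sigma\to\tau$. Since $\sigma$ is Alexandrov, arbitrary intersections of open sets of $Y$ are open, so meets in the complete lattice $\sigma$ are just intersections; as $f$ is continuous, $f^{-1}$ sends such an intersection to the intersection of the images, which is again open in $X$, so $f^{-1}$ preserves all meets of $\sigma$ (the empty meet included, since $f^{-1}(Y)=X$). By the adjoint functor theorem for complete lattices, $f^{-1}$ then has a left adjoint $f_!$, explicitly $f_!(V)=\bigcap\{U\in\sigma\mid V\subseteq f^{-1}(U)\}$, which is open by Alexandrov-ness. Now $(Y,\hat J)$ is a modal space and $f:(X,\tau)\to(Y,\sigma)$ is a continuous surjection whose inverse image has a left adjoint, so Theorem~\ref{t2-6} yields a monotone join-preserving $I=f^{-1}\hat J f_!:\tau\to\tau$ making $(X,\tau,I)$ a modal space with $f^{-1}(a\rightarrow_{\hat J}b)=f^{-1}(a)\rightarrow_I f^{-1}(b)$; rewriting $\rightarrow_{\hat J}$ as $\rightarrow_J$ by the previous paragraph finishes the argument.

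The step I expect to be the crux is the passage from the arbitrary monotone $J$ to the join-preserving $\hat J$ without changing the induced implication, i.e.\ the identity $\hat J(U_x)=J(U_x)$ and its consequence $\rightarrow_{\hat J}=\rightarrow_J$, since this is exactly where the Alexandrov hypothesis does real work, through the least open neighbourhoods. The construction of $f_!$ also genuinely uses Alexandrov-ness (intersections of opens being open), while everything else is routine once Theorem~\ref{t2-6} is in hand.
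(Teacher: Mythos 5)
Your proposal is correct, and it is in fact more careful than the paper's own proof, which consists essentially of only your final step: the paper defines $f_!(U)=\bigcap_{f(U)\subseteq W}W$ (the same map as your adjoint-functor-theorem formula, since $f(U)\subseteq W$ iff $U\subseteq f^{-1}(W)$), verifies $f_!\dashv f^{-1}$ directly using Alexandrov-ness, and then simply invokes Theorem \ref{t2-6} with the given $J$. What the paper does not address is exactly the mismatch you isolated: the corollary assumes only that $J$ is monotone, whereas Theorem \ref{t2-6} is stated for a modal space $(Y,J)$, i.e.\ for join-preserving $J$. Inspecting the proof of Theorem \ref{t2-6}, the identity $f^{-1}(a\rightarrow_J b)=f^{-1}(a)\rightarrow_I f^{-1}(b)$ indeed uses only monotonicity of $J$, but the assertion that $(X,\tau,I)$ is a modal space requires $I=f^{-1}Jf_!$ to preserve joins, which can fail for merely monotone $J$ (take $X=Y$ with $f=\mathrm{id}$, $Y$ a two-point discrete space, $J$ sending both singletons to $\emptyset$ and $Y$ to $Y$; then $I=J$ is not join-preserving). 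Your interposition of $\hat J(U)=\bigcup_{x\in U}J(U_x)$, together with the least-open-neighbourhood argument that $\hat J$ is join-preserving and induces the same implication $\rightarrow_{\hat J}=\rightarrow_J$, repairs precisely this point, so your argument establishes the corollary as literally stated, while the paper's own proof implicitly needs the stronger hypothesis that $J$ preserves joins (or your normalization supplied silently). The remaining difference is cosmetic: you obtain $f_!$ from meet-preservation of $f^{-1}$ plus the adjoint functor theorem for complete lattices, the paper by direct verification of the adjunction; both produce the same left adjoint, and your observation that $\bigcap_i f^{-1}(U_i)$ is open (hence is the meet in $\tau$) is the point that makes the meet-preservation claim legitimate even though $\tau$ itself need not be Alexandrov.
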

\begin{proof}
Define $f_!(U)=\bigcap_{f(U) \subseteq W} W$ where $W$ is an open subset of $Y$. $f_!$ is well-defined because the intersection of open subsets is also open. Moreover we have $f_! \dashv f^{-1}$ because if $U \subseteq f^{-1}(V)$ then $f(U) \subseteq V$ and hence $V \in \bigcap_{f(U) \subseteq W} W$ and therefore $f_!(U) \subseteq V$. And conversely, if $f_!(U) \subseteq V$ then $\bigcap_{f(U) \subseteq W} W \subseteq V$ hence $\bigcap_{f(U) \subseteq W} f^{-1}(W) \subseteq f^{-1}(V)$. Since for any $f(U) \subseteq W$, $U \subseteq f^{-1}(W)$, hence $U \subseteq \bigcap_{f(U) \subseteq W} f^{-1}(W)$ which implies $U \subseteq f^{-1}(V)$. Hence $f_! \dashv f^{-1}$ and finally by the Theorem \ref{t2-6} we can prove the claim.
\end{proof}
As a conclusion for this section, let us specify some important classes of modal spaces.
\begin{dfn}\label{t2-8}
\begin{itemize}
\item[$(i)$]
The class $\mathbf{MS}$ consists of all modal spaces.
\item[$(ii)$]
A modal space is called semi-cotemporal if $Ja=0$ implies $a=0$. Denote the set of these spaces by $\mathbf{sCoTS}$.
\item[$(iii)$]
A modal space is called semi-temporal if $JJ(a) \leq J(a)$. Denote the set of these spaces by $\mathbf{sTS}$.
\item[$(iv)$]
A modal space is called temporal if $J(a) \leq a$. Denote the set of these spaces by $\mathbf{TS}$.
\item[$(v)$]
A modal space is called cotemporal if $ a \leq J(a)$. Denote the set of these spaces by $\mathbf{CoTS}$.
\end{itemize}

Moreover, by $\mathbf{sS}$ we mean $\mathbf{sCoTS} \cap \mathbf{TS}$ and by $\mathbf{S}$ we mean $\mathbf{TS} \cap \mathbf{T}$ and when we put $b$ in the left hand-side of the name of the class, we mean the subclass of all boolean modal spaces.
\end{dfn}
The following theorem shows how modal spaces are related to the categories with weak exponentials:
\begin{thm}\label{t2-9}
Let $(X, J)$ be a modal space, then
\begin{itemize}
\item[$(i)$]
$X$ equipped with $\rightarrow_J$ is a strong category.
\item[$(ii)$]
$X$ equipped with $\rightarrow_J$ is weakly closed iff $J(1)=1$.
\item[$(iii)$]
$X$ equipped with $\rightarrow_J$ is Curry category iff $(X, J)$ is temporal.
\end{itemize}
\end{thm}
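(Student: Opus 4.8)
The plan is to exploit the fact that a locale $X$, viewed as a category, is a poset: its hom-sets have at most one element, so every diagram in Definition~\ref{t1-1} commutes automatically and every naturality and extranaturality clause holds vacuously. The whole theorem therefore collapses to checking the \emph{existence} of finitely many morphisms, i.e.\ finitely many inequalities in $X$, each of which follows from Theorem~\ref{t2-5} together with the monotonicity of $J$. Observe first that $X$ already carries finite products (binary meets) and a terminal object (the top $1$), so Definition~\ref{t1-1} does apply.

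For $(i)$ I would first check that $[a,b]:=a\rightarrow_J b$ is functorial, i.e.\ that $a'\leq a$ and $b\leq b'$ imply $a\rightarrow_J b\leq a'\rightarrow_J b'$: applying Theorem~\ref{t2-5} to $c=a\rightarrow_J b$ gives $J(a\rightarrow_J b)\wedge a\leq b$, hence $J(a\rightarrow_J b)\wedge a'\leq J(a\rightarrow_J b)\wedge a\leq b\leq b'$, and the adjunction returns the claim. Next, $j_X:1\to[X,X]$ exists because $J(1)\wedge X\leq X$ yields $1\leq X\rightarrow_J X$. For the composition $L^X_{YZ}:[X,Y]\times[Y,Z]\to[X,Z]$ I must show $(X\rightarrow_J Y)\wedge(Y\rightarrow_J Z)\leq X\rightarrow_J Z$; by Theorem~\ref{t2-5} it suffices to prove $J\!\big((X\rightarrow_J Y)\wedge(Y\rightarrow_J Z)\big)\wedge X\leq Z$, and monotonicity of $J$ bounds the left-hand side by $J(X\rightarrow_J Y)\wedge J(Y\rightarrow_J Z)\wedge X\leq Y\wedge J(Y\rightarrow_J Z)\leq Z$, using Theorem~\ref{t2-5} twice. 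The two unit laws, associativity of $L$, and the extranaturality clauses then hold automatically. This computation for $L$ is the only place any work happens; the one mild point is that $J$ need not preserve binary meets, but $J(a\wedge b)\leq Ja\wedge Jb$ is all that is used.

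For $(ii)$, recall $\mathcal{C}(a,b)$ is a singleton if $a\leq b$ and empty otherwise, so $\mathcal{C}(X,Y)\neq\varnothing$ iff $X\leq Y$, while $\mathcal{C}(1,[X,Y])\neq\varnothing$ iff $1\leq X\rightarrow_J Y$ iff $J(1)\wedge X\leq Y$. Since $\gamma$ is a map between sets of cardinality at most one, and an inhabited domain forces an inhabited codomain, $\gamma$ is a bijection for all $X,Y$ exactly when, conversely, $J(1)\wedge X\leq Y$ implies $X\leq Y$ for all $X,Y$. If $J(1)=1$ this is trivial, since then $J(1)\wedge X=X$; and if it holds for all $X,Y$, instantiate $X=1$, $Y=J(1)$ to get $J(1)=J(1)\wedge 1\leq J(1)$, hence $1\leq J(1)$, i.e.\ $J(1)=1$.

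For $(iii)$, by $(i)$ the category is already strong, so it is a Curry category precisely when a natural transformation $i_X:X\to[1,X]$ exists, i.e.\ $X\leq 1\rightarrow_J X$ for every $X$. By Theorem~\ref{t2-5} this is equivalent to $J(X)\wedge 1=J(X)\leq X$ for every $X$, which is exactly the condition that $(X,J)$ be temporal; naturality of $i$ is again automatic in a poset. (Note that ``Curry'' here does not subsume ``weakly closed'': a temporal space need not satisfy $J(1)=1$, which is why the criterion in $(iii)$ is genuinely weaker than the one in $(ii)$.)
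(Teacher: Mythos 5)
Your proof is correct and follows essentially the same route as the paper: everything is reduced to inequalities via the poset structure, $j$, $L$ and functoriality are verified from Theorem \ref{t2-5} and monotonicity of $J$, weak closedness is characterized as ``$1 \leq a \rightarrow_J b$ implies $a \leq b$'' and tested at $1 \rightarrow_J J(1)$ (your instantiation $X=1$, $Y=J(1)$), and the Curry condition is read as $a \leq 1 \rightarrow_J a$, i.e.\ $Ja \leq a$. Only your closing aside is imprecise --- temporality and $J(1)=1$ are incomparable conditions, neither being weaker than the other --- but this does not affect the argument.
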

\begin{proof}
For $(i)$, we have to show firstly that $a \to_J b $ is decreasing in $a$ and increasing in $b$, secondly $1 \leq a \to_J a$ and thirdly the following:
\[
(a \to_J b) \wedge (b \to_J c) \leq (a \to_J c) .
\]
The first condition is clear by the definition of $a \rightarrow_J b$ and the fact that $J$ is monotone. The second is clear by the Theorem \ref{t2-5}, because $1 \leq a \rightarrow_J a$ iff $J1 \wedge a \leq a$ which is the case. For the third one, again by using the Theorem \ref{t2-5}, it is enough to check $J[(a \to_J b) \wedge (b \to_J c)] \wedge a \leq c $. But since $J$ is increasing, 
\[
J[(a \to_J b) \wedge (b \to_J c)] \leq J(a \to_J b) \wedge J(b \to_J c).
\]
Now taking meet with $a$ and using the facts that $a \wedge J(a \to_J b) \leq b$ and $b \wedge J(b \rightarrow_J c) \leq c$, we will have
\[
J[(a \to_J b) \wedge (b \to_J c)] \wedge a \leq c.
\]

For $(ii)$, notice that $X$ is weakly closed if $a \to_J b=1$ implies $a \leq b$. Hence, if $J(1)=1$, then by Theorem \ref{t2-5}, $1 \leq a \to_J b$ implies $J(1) \wedge a \leq b$ which implies $a \leq b$. Conversely, assume that the category is weakly closed. We have $1 \to_J J(1)=1$ because 
\[
1 \to_J J(1)=\bigvee_{Jc \leq J(1)} c=1.
\]
Hence by weakly closedness $1 \leq J(1)$ which means that $J(1)=1$.\\

For $(iii)$, note that the category is Curry if for any $a$, $a \leq 1 \to_J a$ which is equivalent to $Ja \leq a$, by Theorem \ref{t2-5}.
\end{proof}

\section{Modal Logics and Topological Semantics}
In this section we will show how modal spaces make a sound and complete topological interpretation for some basic modal logics. But first we need to define the boolean modal spaces to interpret the classical logic in the base of the classical modal systems.
\begin{dfn}\label{t3-1}
A modal space $(X, J)$ is called boolean if the locale $X$ is boolean, i.e. for any $a \in X$ there exists an element $b \in X$ (a complement for $a$) such that $a \wedge b=0$ and $a \vee b=1$.
\end{dfn}
\begin{rem}\label{t3-2}
Note that the complement is trivially unique because if $b$ and $c$ are both complements of $a$ then by definition $(a \vee b) \wedge c=1 \wedge c=c$ but by distributivity $(a \vee b) \wedge c=(a \wedge c) \vee (b \wedge c)=b \wedge c$. Hence $b \wedge c=c$. By symmetry we have $b \wedge c=b$ and hence $b=c$. Therefore the complement is unique and we can denote it by $\neg a$.
\end{rem}
Now we are ready to expand the well-known relationship between topological spaces and the extensions of the modal logic $\mathbf{S4}$ to the extensions of $\mathbf{K}$ and the boolean modal spaces. This shows how the language of modal logic can serve as the language for boolean modal spaces. Later, we will show the same kind of relationship between the sub-intuitionistic logics and the modal spaces without the boolean condition.
\begin{dfn}\label{t3-3}
A topological model for modal logics is a tuple $(X, J, V)$ such that $X$ is a boolean locale, $(X, J)$ is a modal space and $V:\mathcal{L}_{\Box} \to X$ is a valuation function such that: 
\begin{itemize}
\item[$(i)$]
$V(\top)=1$ and $V(\bot)=0$.
\item[$(ii)$]
$V(A \wedge B)=V(A) \wedge V(B)$.
\item[$(iii)$]
$V(A \vee B)=V(A) \vee V(B)$.
\item[$(iv)$]
$V(A \rightarrow B)=\neg V(A) \vee V(B)$.
\item[$(v)$]
$V(\neg A)=\neg V(A)$.
\item[$(vi)$]
$V(\Box A)=\bigvee \{c | Jc \leq V(A) \}=1 \rightarrow_J V(A)$.
\end{itemize}
We say $(X, J, V) \vDash \Gamma \Rightarrow A$ when $\bigwedge_{\gamma \in \Gamma} V(\gamma) \leq V(A)$ and $(X, J) \vDash \Gamma \Rightarrow A$ when for all $V$, $(X, J, V) \vDash \Gamma \Rightarrow A$. Moreover, if for some fix $X$ and for all $(X, J)$ in some class $\mathcal{C}$ we have $(X, J) \vDash \Gamma \Rightarrow A$, then we write $X \vDash_{\mathcal{C}} \Gamma \Rightarrow A$. Also when $\Gamma$ is empty, we omit the symbol $\Rightarrow$ in all of the situations.
\end{dfn}
The following examples show how the modal spaces can be used to refute some modal statements.
\begin{exam}\label{t3-4}
Let $X$ be a set and $f$ a function on $X$. Define $X_{f}=(P(X), J_f)$ where $J_f : P(X) \to P(X)$ is the inverse image function, i.e. $J_f(U)=f^{-1}(U)$. It is clear that $P(X)$ is boolean and $J$ is monotone and join preserving. It is also easy to see that for any modal formula $A$, we have $V(\Box A)= \bigcup\{U | f^{-1}(U) \subseteq V(A)\}=f(V(A))$. Now assume that there exists $U \subseteq X$ such that $f(U) \nsubseteq f(f((U))$ then if we define $V(p)=U$ we have $V(\Box p) \nsubseteq V(\Box \Box p)$ which implies $X_f \nvDash \Box p \rightarrow \Box \Box p$.
\end{exam}

\begin{exam}\label{t3-5}
Let $X$ be a set and consider the model $X_{A}=(P(X), J_A)$ where $J_A : P(X) \to P(X)$ is the following function: $J_A(U)=U \cap A$. It is clear that $P(X)$ is boolean and $J$ is monotone and join preserving. Now, we want to show that $X_A \nvDash \Box p \rightarrow p$. Pick $V(p)=B$ such that $A^c \nsubseteq B$. Then we have $V(\Box p)= \bigcup\{U | U \cap A \subseteq B\}=A^c \cup B$. Hence, $V(\Box p)=A^c \cup B \nsubseteq B=V(p)$.
\end{exam}

In the following we will show the soundness of the topological semantics:
\begin{thm}\label{t3-6}(Soundness Theorem)
\begin{itemize}
\item[$(i)$]
If $\Gamma \vdash_{\mathbf{K}} A$, then $ \Gamma \vDash_{\mathbf{bMS}} A$.
\item[$(ii)$]
If $\Gamma \vdash_{\mathbf{D}} A$, then $ \Gamma \vDash_{\mathbf{bsCoTS}} A$.
\item[$(iii)$]
If $\Gamma \vdash_{\mathbf{T}} A$, then $ \Gamma \vDash_{\mathbf{bCoT}} A$.
\item[$(iv)$]
If $\Gamma \vdash_{\mathbf{K4}} A$, then $ \Gamma \vDash_{\mathbf{bsTS}} A$.
\item[$(v)$]
If $\Gamma \vdash_{\mathbf{KD4}} A$, then $ \Gamma \vDash_{\mathbf{bsS}} A$.
\item[$(vi)$]
If $\Gamma \vdash_{\mathbf{S4}} A$, then $ \Gamma \vDash_{\mathbf{bS}} A$.
\end{itemize}
\end{thm}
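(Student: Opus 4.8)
The plan is to run a single induction on the length of Hilbert-style derivations that serves all six items at once. For every logic in the list, the classical propositional axioms and the rule of modus ponens will be validated with nothing more than the Boolean-algebra structure of $X$ together with clauses $(i)$–$(v)$ of Definition \ref{t3-3}: since $V$ commutes with $\wedge,\vee,\neg,\bot,\top$ and $V(A\to B)=\neg V(A)\vee V(B)$, every propositional tautology takes value $1$ in the complete Boolean algebra $X$, and from $\bigwedge_{\gamma\in\Gamma}V(\gamma)\le V(A)$ and $\bigwedge_{\gamma\in\Gamma}V(\gamma)\le \neg V(A)\vee V(B)$ one gets $\bigwedge_{\gamma\in\Gamma}V(\gamma)\le V(B)$. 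So the real work is to check the modal axioms and the necessitation rule in each class. The engine here is clause $(vi)$, $V(\Box A)=1\rightarrow_J V(A)$: instantiating Theorem \ref{t2-5} at $a=1$ (so that $J(-)\wedge 1=J(-)$) shows that $J$ is left adjoint to the monotone map $\Box:=(1\rightarrow_J -)$ on $X$. From this I would extract three facts to reuse throughout: $Jc\le a$ iff $c\le \Box a$; $\Box$ preserves all meets; and (taking $c=\Box a$) $J(\Box a)\le a$ for every $a$, which I will call $(\star)$. Since also $\Box 1=\bigvee\{c\mid Jc\le 1\}=1$, necessitation is sound: if the induction hypothesis gives $V(A)=1$ in every model, then $V(\Box A)=\Box 1=1$.

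With that in place, item $(i)$ reduces to validating the $\mathbf{K}$-axiom $\Box(A\to B)\to(\Box A\to\Box B)$ over $\mathbf{bMS}$, which I would get from meet-preservation of $\Box$ and the Boolean identity $(\neg x\vee y)\wedge x=x\wedge y$:
\[
V(\Box(A\to B))\wedge V(\Box A)=\Box\big((\neg V(A)\vee V(B))\wedge V(A)\big)=\Box(V(A)\wedge V(B))\le\Box V(B).
\]
For the remaining items only the extra axioms need attention, everything else being inherited from the $\mathbf{K}$-case. For $\mathbf{D}$, take the seriality axiom in the form $\Box\bot\to\bot$ (equivalently $\neg(\Box A\wedge\Box\neg A)$); since $V(\Box A)\wedge V(\Box\neg A)=\Box 0$ by meet-preservation and $J(\Box 0)\le 0$ by $(\star)$, semi-cotemporality forces $\Box 0=0$, giving $(ii)$ over $\mathbf{bsCoTS}$. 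For $\mathbf{T}$, the axiom $\Box A\to A$ follows by combining $(\star)$, $J(\Box V(A))\le V(A)$, with cotemporality $\Box V(A)\le J(\Box V(A))$, giving $(iii)$. For the $\mathbf{4}$-axiom $\Box A\to\Box\Box A$, the adjunction turns it into $JJ(\Box V(A))\le V(A)$, and in a semi-temporal space the chain $JJ(\Box V(A))\le J(\Box V(A))\le V(A)$ (last step by $(\star)$) closes it, giving $(iv)$. Items $(v)$ and $(vi)$ are then immediate, since $\mathbf{KD4}=\mathbf{K}+\mathbf{D}+\mathbf{4}$ and $\mathbf{S4}=\mathbf{K}+\mathbf{T}+\mathbf{4}$, while $\mathbf{bsS}$ and $\mathbf{bS}$ are precisely the intersections of the classes over which those axioms were just shown valid, so the induction of the first paragraph applies verbatim.

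I do not anticipate a genuine obstacle: once the adjunction $J\dashv\Box$ is recorded, each axiom check is a two- or three-line order calculation. The two places to be careful are bookkeeping — matching each axiom to the exact condition of Definition \ref{t2-8} that makes it go through — and the necessitation step of the induction, which must be invoked only on theorems (empty antecedent) so that $V(A)=1$ is actually available rather than merely $\bigwedge_{\gamma\in\Gamma}V(\gamma)\le V(A)$; the standard shape of the Hilbert systems for $\mathbf{K}$ through $\mathbf{S4}$ makes this unproblematic.
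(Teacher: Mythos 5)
Your proposal is correct and follows essentially the same route as the paper: induction on derivations, with the classical part absorbed by the Boolean structure, necessitation handled via $\Box 1=1$, and each modal axiom checked against the matching condition of Definition \ref{t2-8}. The only cosmetic difference is that you package Theorem \ref{t2-5} at $a=1$ as the adjunction $J\dashv\Box$ and argue via its counit and meet-preservation, whereas the paper unfolds the same facts by manipulating the generating elements of the join $\bigvee\{c\mid Jc\le V(A)\}$ directly; the underlying computations coincide.
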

\begin{proof}
First of all it is clear that a proof for the case $\Gamma=\emptyset$ suffices. Now, by induction on the length of the proof of $A$ in the corresponding system, we will prove that for any corresponding modal space $(X, J)$ and any $V$, $(X, J, V) \vDash A$. If $A$ is a classical tautology then the claim is clear since $X$ is a boolean locale.\\
For $(i)$, if $A$ is an instance of the axiom $\mathbf{K}$ then we have to show 
\[
V(\Box (B \rightarrow C) \rightarrow V(\Box B \rightarrow \Box C) )=1.
\]
To this end, it is enough to show 
\[
V(\Box (B \rightarrow C)) \wedge V(\Box B) \leq V(\Box C).
\]
Assume $J(a) \leq \neg V(B) \vee V(C)$ and $J(b) \leq V(B)$, then $J(a) \wedge J(b) \leq V(C)$. Since $J$ is monotone, $J(a \wedge b) \leq J(a) \wedge J(b)$, hence, $J(a \wedge b) \leq C$, therefore for any generating $a$ for $V(\Box (B \rightarrow C))$ and any generating $b$ for $V(B)$, $a \wedge b$ is a generating element for $C$, hence $V(\Box (B \rightarrow C)) \wedge V(\Box B) \leq V(\Box C)$.\\
For $(ii)$, if $A$ is the axiom $\mathbf{D}=\neg \Box \bot$, then $V(\Box \bot)=\bigvee \{a | Ja \leq 0 \}=0$ since the modal space is semi-cotemporal.\\
For $(iii)$, if $A$ is an instance of the axiom $\mathbf{T}$, then we have to show $V(\Box B) \leq V(B)$. Assume that $a$ is a generating element for $V(\Box B)$, then $J(a) \leq V(B)$. Since the modal space is cotemporal, we have $a \leq J(a)$. Hence we conclude $a \leq V(B)$ which completes the proof.\\
For $(iv)$, if $A$ is an instance of the axiom $\mathbf{4}$, then we have to show $V(\Box B) \leq V(\Box \Box B)$. Assume that $a$ is a generating element for $V(\Box B)$, then $J(a) \leq V(B)$. First we show $J(a) \leq V(\Box B)$. Since the modal space is semi-temporal, we have $JJ(a) \leq J(a)$ and thus $JJ(a) \leq V(B)$. Hence, $JJ(a)$ is a generating element for $V(\Box B)$, therefore $J(a) \leq V(\Box B)$. Now, we know that $a$ is a generating element for $V(\Box \Box B)$, hence $V(\Box B) \leq V(\Box \Box B)$.\\ 

For the rules, if $A$ is proved by the modus ponens rule then the claim is easy to prove. And if $A$ is proved by the necessitation rule, we have $V(B)=1$ then $V(\Box B)=\bigcup \{a | J(a) \leq V(B)=1 \}=1$ which is what we wanted.
\end{proof}
Moreover, we can prove the completeness of these topological models:
\begin{thm}\label{t3-7}(Completeness Theorem)
\begin{itemize}
\item[$(i)$]
If $\Gamma \vDash_{\mathbf{bMS}} A$, then $ \Gamma \vdash_{\mathbf{K}} A$.
\item[$(ii)$]
If $\Gamma \vDash_{\mathbf{bsCoTS}} A$, then $ \Gamma \vdash_{\mathbf{D}} A$.
\item[$(iii)$]
If $\Gamma \vDash_{\mathbf{bCoT}} A$, then $ \Gamma \vdash_{\mathbf{T}} A$.
\item[$(iv)$]
If $\Gamma \vDash_{\mathbf{bsTS}} A$, then $ \Gamma \vdash_{\mathbf{K4}} A$.
\item[$(v)$]
If $\Gamma \vDash_{\mathbf{bsS}} A$, then $ \Gamma \vdash_{\mathbf{KD4}} A$.
\item[$(vi)$]
If $\Gamma \vDash_{\mathbf{bS}} A$, then $ \Gamma \vdash_{\mathbf{S4}} A$.
\end{itemize}
\end{thm}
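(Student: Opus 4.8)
\section*{Proof proposal}

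The plan is to prove every clause by contraposition: assuming $\Gamma \nvdash_{\mathbf{L}} A$, I will exhibit a boolean modal space in the class $\mathcal{C}$ named in the corresponding clause of the soundness theorem (Theorem~\ref{t3-6}) together with a valuation $V$ with $\bigwedge_{\gamma\in\Gamma}V(\gamma)\not\leq V(A)$. The engine is the well-known \emph{strong} Kripke completeness of the six logics $\mathbf{K},\mathbf{D},\mathbf{T},\mathbf{K4},\mathbf{KD4},\mathbf{S4}$: if $\Gamma\nvdash_{\mathbf{L}}A$ then there is a Kripke model $\mathfrak{M}=(W,R,V_0)$ and a point $w\in W$ with $\mathfrak{M},w\Vdash\gamma$ for every $\gamma\in\Gamma$ and $\mathfrak{M},w\nVdash A$, where $R$ may be taken arbitrary, serial, reflexive, transitive, serial-and-transitive, or a preorder, respectively. (Concretely this is the canonical model construction, which for these canonical logics lands in the stated frame class; one could equally run the argument algebraically, passing from the Lindenbaum--Tarski algebra of $\mathbf{L}$ to its Jónsson--Tarski/canonical extension, but the relational version is the most economical here.)

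Given such a model I would turn $(W,R)$ into a modal space exactly as in Example~\ref{t2-3}: take $X=P(W)$, which is a boolean locale since it is a powerset algebra, and $J=J_R$ with $J_R(U)=\{x\mid\exists y\,(R(y,x)\wedge y\in U)\}$, which is monotone and preserves all joins, so $(P(W),J_R)$ is a boolean modal space. Two things must then be checked. First, that $(P(W),J_R)$ lies in the class appropriate to $\mathbf{L}$, i.e. that the first-order property of $R$ yields the defining inequality of the matching class in Definition~\ref{t2-8}: directly from the formula for $J_R$ one sees that seriality gives $J_Ra=0\Rightarrow a=0$, reflexivity gives $a\leq J_Ra$, and transitivity gives $J_RJ_Ra\leq J_Ra$, and the remaining two cases are the conjunctions of these. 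Second, the bridge lemma: defining $V(p):=V_0(p)=\{x\mid\mathfrak{M},x\Vdash p\}$ and extending $V$ to all formulas by the clauses of Definition~\ref{t3-3}, one shows by induction on the formula that $V(B)=\{x\mid\mathfrak{M},x\Vdash B\}$. All clauses are routine (classical propositional connectives are interpreted by the Boolean operations, just as $\Vdash$ treats them pointwise) except $\Box$, which is precisely the identity already observed before Definition~\ref{t2-1} and in Example~\ref{t3-4}: $1\rightarrow_{J_R}U=\bigcup\{W'\mid J_RW'\subseteq U\}=\{x\mid\forall y\,(R(x,y)\to y\in U)\}$, the usual Kripke clause for $\Box$.

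With the bridge lemma in hand the refutation transfers: $w\in\bigcap_{\gamma\in\Gamma}V(\gamma)=\bigwedge_{\gamma\in\Gamma}V(\gamma)$ while $w\notin V(A)$, hence $\bigwedge_{\gamma\in\Gamma}V(\gamma)\not\leq V(A)$, so $(P(W),J_R,V)\nvDash\Gamma\Rightarrow A$, and since $(P(W),J_R)$ belongs to the appropriate class this gives $\Gamma\nvDash_{\mathcal{C}}A$, as required. I expect the only delicate point to be bookkeeping rather than mathematics: one must make sure the syntactic consequence relation $\vdash_{\mathbf{L}}$ is the \emph{local} one, with necessitation applied only to theorems (exactly the convention already used in the soundness proof of Theorem~\ref{t3-6}), so that ``$\Gamma\nvdash_{\mathbf{L}}A$'' truly furnishes a single pointed Kripke model verifying all of $\Gamma$ and refuting $A$; and, relatedly, that one invokes strong (not merely weak) Kripke completeness so that an infinite $\Gamma$ is handled. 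All six logics are strongly Kripke complete for their frame classes, so neither issue causes trouble.
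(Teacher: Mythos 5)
Your proposal is correct and follows essentially the same route as the paper: reduce to Kripke completeness, convert the countermodel $(W,R)$ into the boolean modal space $(P(W),J_R)$ of Example \ref{t2-3}, prove the truth-preservation ("bridge") lemma by induction with the $\Box$ clause computed from $1\rightarrow_{J_R}(-)$, and verify that seriality, transitivity and reflexivity of $R$ yield the corresponding conditions of Definition \ref{t2-8}. Your added care about strong (local-consequence) Kripke completeness is a reasonable tightening of what the paper leaves implicit, but it is not a different argument.
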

\begin{proof}
To prove the completeness we will use the Kripke completeness for the modal systems. It is enough to consider the relational frame $(W, R, V)$ as a modal space $(P(W), \tau, V')$ as in the Example \ref{t2-3} where $V'=V$. By induction on the complexity of the formula $A$, we want to prove $w \in V'(A)$ iff $w \vDash A$. The atomic case is trivial by definition. The cases for conjucntion, disjunction and implication are also trivial. For the modal case, assume $w \in V'(\Box A)=\bigcup_{JU \subseteq V'(A)} U$. Therefore, there exists $U$ such that $w \in U$ and $JU \subseteq V'(A)$. Hence if $(w, u) \in R$, then $u \in JU$ and thus $u \in V'(A)$. By IH, $u \vDash A$. Hence $w \vDash \Box A$.
Conversely, if $w \vDash \Box A$ then for all $u$ such that $(w, u) \in R$ we have $u \vDash A$ which by using IH implies that $u \in V'(A)$. Therefore if we define $U=\{w\}$ then by the definition of $J$ we have $JU=\{u| (w, u) \in R\} \subseteq V'(A)$. Hence $w \in V'(\Box A)$.\\

The only thing that remains to prove is the compatibility conditions:\\

If $(W, R)$ is serial, then if $J(U)=\emptyset$ then $U=\emptyset$. Because if $x \in U$, then since $R$ is serial, there exists $y$ such that $(x, y) \in R$. Hence by definition $y \in J(U)=\emptyset$ which is a contradiction.\\
If $(W, R)$ is transitive, then $JJ(U) \subseteq J(U)$ because if $x \in JJ(U)$, then there exists $y \in J(U)$ such that $(y, x) \in R$. By the same line of reasoning, there exists $z \in U$ such that $(z, y) \in R$. Since $R$ is transitive, $(z, x) \in R$, therefore $x \in J(U)$.\\ 
If $(W, R)$ is reflexive, then $U \subseteq J(U)$ because if $x \in U$, then $(x, x) \in R$ hence $x \in J(U)$. 
\end{proof}
In the rest of this section we will try to generalize the completeness theorem to a more powerful version. In that version we fix the topological space and we will show how a modal logic can be considered as the modal logic of just one space.
\begin{thm}\label{t3-8}
Assume $X$ and $Y$ are boolean spaces and $f: X \to Y$ is a continuous surjection. Then for any modal formula $A$, if $X \vDash_{\mathcal{C}} A$ then $Y \vDash_{\mathcal{C}} A$ for any class $\mathcal{C}$ as in the Definition \ref{t2-8}.
\end{thm}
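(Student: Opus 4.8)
The plan is to establish the contrapositive: given a model on $Y$ inside the class $\mathcal{C}$ that refutes $A$, transport it back along $f$ to a model on $X$ inside $\mathcal{C}$ that still refutes $A$. The heart of the argument is Theorem \ref{t2-6}, which already says how the weak implication pulls back along a continuous surjection whose inverse image map has a left adjoint; the work is in checking that $f^{-1}$ does have a left adjoint here and that the pulled-back modal operator stays inside $\mathcal{C}$.

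First I would observe that $f^{-1} : \mathcal{O}(Y) \to \mathcal{O}(X)$ automatically has a left adjoint. As the frame map of a continuous surjection it is an injective frame homomorphism, so it preserves all joins, finite meets, $0$ and $1$; and since $X$ and $Y$ are boolean it also preserves complements, because $f^{-1}(a) \wedge f^{-1}(\neg a) = 0$ and $f^{-1}(a) \vee f^{-1}(\neg a) = 1$ force $f^{-1}(\neg a) = \neg f^{-1}(a)$ by uniqueness of complements (Remark \ref{t3-2}). Hence $f^{-1}$ preserves arbitrary meets via $\bigwedge_i a_i = \neg \bigvee_i \neg a_i$, and a monotone map between complete lattices preserving all meets has a left adjoint $f_!$. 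So Theorem \ref{t2-6} is available.

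Now fix a class $\mathcal{C}$ from Definition \ref{t2-8} and suppose $Y \nvDash_{\mathcal{C}} A$; choose $J$ with $(Y,J) \in \mathcal{C}$ and a valuation $V$ on $Y$ with $V(A) \neq 1$. Put $I = f^{-1} J f_!$ and let $V'$ be the valuation on $X$ determined by $V'(p) = f^{-1}(V(p))$ on atoms. By Theorem \ref{t2-6}, $(X,I)$ is a modal space and $f^{-1}(a \rightarrow_J b) = f^{-1}(a) \rightarrow_I f^{-1}(b)$; in particular, taking $a = 1$ and using $V(\Box B) = 1 \rightarrow_J V(B)$, we get $f^{-1}(V(\Box B)) = 1 \rightarrow_I f^{-1}(V(B))$. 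Since $f^{-1}$ also commutes with $\wedge$, $\vee$, $\neg$ and $\to$, a straightforward induction on formulas gives $V'(C) = f^{-1}(V(C))$ for all $C$. As $f$ is a surjection, $f^{-1}$ is injective with $f^{-1}(1) = 1$, so $V'(A) = f^{-1}(V(A)) \neq 1$; hence $(X,I,V') \nvDash A$, and $X \nvDash_{\mathcal{C}} A$ as soon as $(X,I) \in \mathcal{C}$.

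This last point is the substantive step, and since $X$ is boolean the pair $(X,I)$ is automatically a boolean modal space, so only the defining inequality of $\mathcal{C}$ has to be checked. Writing $a \leq f^{-1} f_!(a)$ and $f_! f^{-1}(a) \leq a$ for the unit and counit of $f_! \dashv f^{-1}$, and using monotonicity of $J$ and $f^{-1}$ together with injectivity of $f^{-1}$: for $\mathbf{sCoTS}$, $Ia = 0$ forces $J f_!(a) = 0$, hence $f_!(a) = 0$, hence $a \leq f^{-1}(0) = 0$; for $\mathbf{CoTS}$, $a \leq f^{-1} f_!(a) \leq f^{-1} J f_!(a) = Ia$; for $\mathbf{sTS}$, $IIa = f^{-1} J f_! f^{-1} J f_!(a) \leq f^{-1} J J f_!(a) \leq f^{-1} J f_!(a) = Ia$; the classes $\mathbf{sS}$ and $\mathbf{S}$ then follow by intersecting, and $\mathbf{MS}$ is immediate. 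I expect the one genuine obstacle to be the bare temporal condition $J(a) \leq a$ of Definition \ref{t2-8}(iv): there the naive estimate gives only $Ia = f^{-1} J f_!(a) \leq f^{-1} f_!(a)$, which is useless for $Ia \leq a$ because $f^{-1} f_!(a) \geq a$ — indeed for $J = \mathrm{id}$ the operator $f^{-1} f_!$ is in general strictly above the identity. That case must be handled by a separate argument, or simply set aside, since the logics $\mathbf{K}, \mathbf{D}, \mathbf{T}, \mathbf{K4}, \mathbf{KD4}, \mathbf{S4}$ of this section only call on the classes $\mathbf{MS}, \mathbf{sCoTS}, \mathbf{CoTS}, \mathbf{sTS}, \mathbf{sS}, \mathbf{S}$. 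With $(X,I) \in \mathcal{C}$ in hand the contrapositive is complete.
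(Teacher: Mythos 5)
Your proposal is correct and follows essentially the same route as the paper: pull the valuation back along $f^{-1}$, use Theorem \ref{t2-6} with the operator $f^{-1}Jf_!$ to get $V'(C)=f^{-1}(V(C))$ by induction, conclude via injectivity of $f^{-1}$, and verify the class conditions by the same three computations (your direct construction of $f_!$ from complement- and meet-preservation simply replaces the paper's appeal, via Corollary \ref{t2-7}, to the claim that a boolean locale is Alexandrov). Your remark that the bare temporal class $\mathbf{TS}$ resists this transfer is well taken: the paper's own proof likewise verifies only $\mathbf{sCoTS}$, $\mathbf{sTS}$ and $\mathbf{CoTS}$, which suffices for the modal systems of Corollary \ref{t3-9} under the intended reading of $\mathbf{sS}$ and $\mathbf{S}$.
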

\begin{proof}
Let $V$ be a valuation for $(Y, I)$. Define $V'(p)=f^{-1}(V(p))$. Since any boolean locale is Alexandrov, by Theorem \ref{t2-7} we can prove that there exist $J$ such that $(X, J)$ is a modal space and $f^{-1}(a \rightarrow_I b)=f^{-1}(a) \rightarrow_J f^{-1}(b)$. It is easy to prove by induction on the complexity of $A$ that $V'(A)=f^{-1}(V(A))$.\\
For the atomic $A=p$ it is clear by the definition of $V'$. For $A=\bot, \top$, we have to show that $f^{-1}(0)=0$ and $f^{-1}(1)=1$. Both are correct since $f^{-1}$ is a right and also left adjoint and hence preserves limits and colimits.\\
The conjunction and disjunction parts are proved by the fact that $f^{-1}$ preserves meet and join because it is continuous. The proof of the modal part is implied by the fact that $f^{-1}(a \rightarrow_I b)=f^{-1}(a) \rightarrow_J f^{-1}(b)$ and $f^{-1}(1)=1$.\\

Therefore for any $A$ we have, $V'(A)=f^{-1}(V(A))$. Since $X \vDash A$, we have $(X, J, V') \vDash A$ hence $V'(A)=1$, therefore we have $f^{-1}(V(A))=1$. On the other hand we know that if $f^{-1}(c)=1$ then $c=1$ because if $f^{-1}(c)=1$ since $f^{-1}(1)=1$ we know $f^{-1}(c)=f^{-1}(1)$ but $f$ is a surjection which means that $f^{-1}$ is one to one, hence $c=1$. By this consideration and the fact that $f^{-1}(V(A))=1$ we can deduce $V(A)=1$ which completes the proof.\\
The last thing to prove is that if $I$ has one of the properties in the Definition \ref{t2-8} the corresponding $J$ has it as well. If $\mathcal{C}=\mathbf{sCoTS}$, then if $J(a)=0$ we have $f^{-1}If_{!}(a)=0$ hence $If_{!}(a)=0$ therefore $f_{!}(a)=0$. Since $f_{!}(a) \leq 0$, we have $a \leq f^{-1}(0)=0$ hence $a=0$.\\
If $\mathcal{C}=\mathbf{sTS}$, we want to show that $JJ(a) \leq J(a)$. Therefore we have to show $f^{-1}If_{!}f^{-1}If_{!}(a) \leq f^{-1}If_{!}(a)$. Since $f_{!}f^{-1}(c) \leq c$ for any $c$, $f_{!}f^{-1}If_{!}(a) \leq If_{!}(a)$ therefore $If_{!}f^{-1}If_{!}(a) \leq IIf_{!}(a)$. Since $IIf_{!}(a) \leq If_{!}(a)$, we have $If_{!}f^{-1}If_{!}(a) \leq If_{!}(a)$ which implies $f^{-1}If_{!}f^{-1}If_{!}(a) \leq f^{-1}If_{!}(a)$.\\
If $\mathcal{C}=\mathbf{CoTS}$, we want to show that $a \leq J(a)$. It is equivalent to $a \leq f^{-1}If_{!}(a)$ which is also equivalent to $f_{!}(a) \leq If_{!}(a)$ which what we have.
\end{proof}

\begin{cor}\label{t3-9}(Completeness Theorem, Strong version) For any topological space with infinitely many connected components we have:
\begin{itemize}
\item[$(i)$]
If $X \vDash_{\mathbf{bMS}} \Gamma \Rightarrow A$ then $\Gamma \vdash_{\mathbf{K}} A$.
\item[$(ii)$]
If $ X \vDash_{\mathbf{bsCoTS}} \Gamma \Rightarrow A$ then $\Gamma \vdash_{\mathbf{D}} A$.
\item[$(iii)$]
If $ X \vDash_{\mathbf{bCoTS}} \Gamma \Rightarrow A$ then $\Gamma \vdash_{\mathbf{T}} A$.
\item[$(iv)$]
If $ X \vDash_{\mathbf{bsTS}} \Gamma \Rightarrow A$ then $\Gamma \vdash_{\mathbf{K4}} A$.
\item[$(v)$]
If $ X \vDash_{\mathbf{bsS}} \Gamma \Rightarrow A$ then $\Gamma \vdash_{\mathbf{KD4}} A$.
\item[$(vi)$]
If $ X \vDash_{\mathbf{bS}} \Gamma \Rightarrow A$ then $\Gamma \vdash_{\mathbf{S4}} A$.
\end{itemize}
\end{cor}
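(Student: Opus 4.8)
The plan is to derive the strong completeness from the ordinary Kripke completeness of these six logics (Theorem \ref{t3-7}), the finite model property of each of them, and Theorem \ref{t3-8}, via the single topological observation that a space with infinitely many connected components admits a continuous surjection onto every finite discrete space. I would argue by contraposition: assuming $\Gamma \nvdash_L A$ for the logic $L$ attached to the class $\mathcal{C}$ in each item, I produce, on the fixed locale $X$, a modal space in $\mathcal{C}$ together with a valuation refuting $\Gamma \Rightarrow A$, contradicting $X \vDash_{\mathcal{C}} \Gamma \Rightarrow A$.

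The first step is a purely topological lemma: if $X$ has infinitely many connected components, then for every $n \geq 1$ it can be written as a disjoint union of $n$ nonempty clopen subsets. Since $X$ is disconnected we have $X = U_1 \sqcup U_2$ with $U_1,U_2$ clopen and nonempty; the connected components of $X$ are distributed among the $U_i$, and as there are infinitely many of them at least one $U_i$ still has infinitely many components, so that piece can be split again. Iterating $n-1$ times gives the partition, and such a partition is exactly the data of a continuous surjection $f : X \to W$ onto the $n$-element discrete space $W$, whose locale of opens is the finite boolean (hence Alexandrov) algebra $P(W)$. It is precisely here that the finite model property is needed: $X$ need not admit any countably infinite clopen partition — the one-point compactification of a countably infinite discrete set has infinitely many components but its only clopen sets are the finite subsets of the discrete part and their complements, so it has no infinite clopen partition — hence one cannot in general map $X$ onto an infinite frame.

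Now assemble the pieces. Each of $\mathbf{K},\mathbf{D},\mathbf{T},\mathbf{K4},\mathbf{KD4},\mathbf{S4}$ is complete with respect to its \emph{finite} frames, the defining condition on $R$ (no condition, serial, reflexive, transitive, serial and transitive, preorder) matching the class $\mathcal{C}$ exactly as in the proofs of Theorems \ref{t3-6} and \ref{t3-7}. So from $\Gamma \nvdash_L A$ we obtain a finite frame $(W,R)$ in the appropriate class and a valuation $V$ with, in the guise of Example \ref{t2-3} and the proof of Theorem \ref{t3-7}, $(P(W), J_R) \in \mathcal{C}$ and $(P(W), J_R, V) \nvDash \Gamma \Rightarrow A$. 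Choose a continuous surjection $f : X \to W$ from the lemma. The argument of Theorem \ref{t3-8} applies verbatim to sequents rather than single formulas: it transfers $V$ to $V' = f^{-1}V$ on $X$ with $V'(B) = f^{-1}(V(B))$ for all $B$, produces $I$ on $X$ with $(X,I) \in \mathcal{C}$, and since $f^{-1}$ is an injective frame homomorphism it both preserves and reflects the order; so $X \vDash_{\mathcal{C}} \Gamma \Rightarrow A$ forces $\bigwedge_{\gamma} V(\gamma) \leq V(A)$, contradicting the refutation. Hence $\Gamma \vdash_L A$, and items (i)--(vi) differ only in bookkeeping about which class and which frame condition is in play.

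The main obstacle is the clopen-partition lemma together with the realization that it is the finite model property that makes finite partitions sufficient; the rest is a routine transfer along Theorems \ref{t3-7} and \ref{t3-8}. A secondary point to check carefully is the mild upgrade of Theorem \ref{t3-8} from formulas to sequents, which amounts to noting that $f^{-1}$, being injective and meet-preserving, reflects $\leq$, so that $f^{-1}(\bigwedge_{\gamma} V(\gamma)) \leq f^{-1}(V(A))$ already yields $\bigwedge_{\gamma} V(\gamma) \leq V(A)$.
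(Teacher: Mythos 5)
Your proposal is correct and takes essentially the same route as the paper: its own (very brief) proof likewise combines the finite model property of these six logics with the existence of a continuous surjection from $X$ onto each finite frame, and then transfers along Theorem \ref{t3-8}. You simply make explicit what the paper leaves implicit, namely the clopen-partition lemma producing the surjection onto a finite discrete space and the order-reflection needed to handle sequents rather than single formulas.
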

\begin{proof}
The reason is that the modal spaces constructed by the finite relational frames are enough for completeness and for any of them like $(P(W), J_R)$, it is possible to find a surjective continuous function $f: X \to W$.
\end{proof}

\section{$J$-Logics and Sub-intuitionistic Logics}
So far, we have seen how modal logics can be used to describe the geometrical nature of modal spaces. However, modal systems do not provide a good syntactical reflection of the geometrical situation that we face in modal spaces. Therefore, it will be convenient to develop some propositional systems to have a more faithful language to formalize modal spaces. Moreover and after introducing these systems, we will show how some sub-intuitionistic logics can be embedded in these systems and how consequently the modal spaces can be considered as the natural topological interpretation of these weak logics.\\

Let $\mathcal{L}_J$ be the usual language of propositional logic with a unary modal operator $J$. To introduce some formal systems in this language, consider the following set of natural deduction rules:

\begin{flushleft}
 		\textbf{Structural Rules:}
\end{flushleft}
\begin{center}
  	\begin{tabular}{c c}
		\AxiomC{$ \Gamma \vdash A$}
		\LeftLabel{\tiny{$F$}}
		\UnaryInfC{$J\Gamma \vdash JA$}
		\DisplayProof
		 &\;
		\AxiomC{$ \Gamma_0 \vdash A$}
		\AxiomC{$ \Gamma_1, A \vdash B$}
		\LeftLabel{\tiny{$cut$}}
		\BinaryInfC{$\Gamma_0, \Gamma_1 \vdash B$}
		\DisplayProof
		\end{tabular}
\end{center}

\begin{flushleft}
  		\textbf{Propositional Rules:}
\end{flushleft}
\begin{center}
  	\begin{tabular}{c c}
  	
  	    \AxiomC{$ $}
		\LeftLabel{\tiny{$\top$}}
		\UnaryInfC{$\Gamma \vdash \top$}
		\DisplayProof
  	    &\;\;
		\AxiomC{$ \Gamma \vdash \bot$}
		\LeftLabel{\tiny{$\bot$}}
		\UnaryInfC{$\Gamma \vdash A$}
		\DisplayProof
  	    \\[3 ex]
  	
  		\AxiomC{$\Gamma_0, A  \vdash C $}
  		\AxiomC{$\Gamma_1, B  \vdash C$}
  		\LeftLabel{{\tiny $\vee E$}} 
  		\BinaryInfC{$ \Gamma_0, \Gamma_1, A \vee B \vdash C$}
  		\DisplayProof
	  		&
	   	\AxiomC{$ \Gamma \vdash A_i$}
   		\RightLabel{{\tiny $ (i=0, 1) $}}
   		\LeftLabel{{\tiny $\vee I$}} 
   		\UnaryInfC{$ \Gamma \vdash A_0 \lor A_1$}
   		\DisplayProof
	   		\\[3 ex]
   		\AxiomC{$ \Gamma \vdash A_0 \wedge A_1  $}
   		\RightLabel{{\tiny $ (i=0, 1) $}} 
   		\LeftLabel{{\tiny $\wedge E$}}  		
   		\UnaryInfC{$ \Gamma \vdash A_i $}
   		\DisplayProof
	   		&
   		\AxiomC{$\Gamma_0  \vdash A$}
   		\AxiomC{$\Gamma_1  \vdash B$}
   		\LeftLabel{{\tiny $\wedge I$}} 
   		\BinaryInfC{$ \Gamma_0, \Gamma_1 \vdash A \wedge B $}
   		\DisplayProof
   			\\[3 ex]
   		\AxiomC{$ \Gamma_0 \vdash A $}
  		\AxiomC{$ \Gamma_1 \vdash J(A \rightarrow B)$}
  		\LeftLabel{{\tiny $\rightarrow E$}} 
   		\BinaryInfC{$ \Gamma_0, \Gamma_1 \vdash B$}
   		\DisplayProof
   			&
   		\AxiomC{$ J\Gamma, A \vdash B $}
   		\LeftLabel{{\tiny $\rightarrow I$}} 
   		\UnaryInfC{$ \Gamma \vdash A \rightarrow B$}
   		\DisplayProof
   		\\[3 ex]
   	
	\end{tabular}
\end{center}

\begin{flushleft}
 		\textbf{Additional Rules:}
\end{flushleft}
\begin{center}
  	\begin{tabular}{c c c}
		
		\AxiomC{$ JA \vdash \bot $}
		\LeftLabel{\tiny{$sCoJ$}}
		\UnaryInfC{$ A \vdash \bot $}
		\DisplayProof
		&
		\AxiomC{$ \Gamma \vdash A$}
		\LeftLabel{\tiny{$CoJ$}}
		\UnaryInfC{$ \Gamma \vdash JA$}
		\DisplayProof
		&
		\AxiomC{$ \Gamma \vdash JA$}
		\LeftLabel{\tiny{$J$}}
		\UnaryInfC{$ \Gamma \vdash A$}
		\DisplayProof
		\end{tabular}
\end{center}
Note that in the rules $\rightarrow I$ and $F$, $\Gamma$ can have exactly one element.\\ 

Now define the system minimal $J$-logic as the logic with all the structural and propositional rules. If we add to this logic the rule $sCoJ$ we will have the logic $\mathbf{sCoJ}$ and similarly for the other rules. Note that we show the resulting system of adding both of the rules $sCoJ$ and $J$ as $\mathbf{sI}$ and the system with the rules $J$ and $CoJ$ as $\mathbf{I}$.
\begin{dfn}\label{t4-1}
A topological model for $J$-logics is a tuple $(X, J, V)$ such that $(X, J)$ is a modal space and $V:\mathcal{L}_{J} \to X$ is a valuation function such that: 
\begin{itemize}
\item[$(i)$]
$V(\top)=1$ and $V(\bot)=0$.
\item[$(ii)$]
$V(A \wedge B)=V(A) \wedge V(B)$.
\item[$(iii)$]
$V(A \vee B)=V(A) \vee V(B)$.
\item[$(iv)$]
$V(A \rightarrow B)= V(A) \rightarrow_J V(B)$.
\item[$(v)$]
$V(JA)=JV(A)$.
\end{itemize}
We say $(X, J, V) \vDash \Gamma \Rightarrow A$ when $\bigwedge_{\gamma \in \Gamma} V(\gamma) \leq V(A)$ and $(X, J) \vDash \Gamma \Rightarrow A$ when for all $V$, $(X, J, V) \vDash \Gamma \Rightarrow A$. Moreover, if for some fix $X$ and for all $(X, J)$ in some class $\mathcal{C}$ we have $(X, J) \vDash \Gamma \Rightarrow A$, then we write $X \vDash_{\mathcal{C}} \Gamma \Rightarrow A$. Furthermore,  we omit the symbol $\Rightarrow$ in all the cases where $\Gamma$ is empty. 
\end{dfn}
As we expect, we have the following soundness theorem.
\begin{thm}\label{t4-2}(Soundness Theorem)
\begin{itemize}
\item[$(i)$]
If $\Gamma \vdash_{\mathbf{mJ}} A$ then $\mathbf{MS} \vDash \Gamma \Rightarrow A$.
\item[$(ii)$]
If $\Gamma \vdash_{\mathbf{sCoJ}} A$ then $\mathbf{sCoTS} \vDash \Gamma \Rightarrow A$.
\item[$(iii)$]
If $\Gamma \vdash_{\mathbf{CoJ}} A$ then $ \mathbf{CoTS} \vDash \Gamma \Rightarrow A$.
\item[$(iv)$]
If $\Gamma \vdash_{\mathbf{J}} A$ then $ \mathbf{TS} \vDash \Gamma \Rightarrow A$.
\item[$(v)$]
If $\Gamma \vdash_{\mathbf{sI}} A$ then $ \mathbf{sS} \vDash \Gamma \Rightarrow A$.
\item[$(vi)$]
If $\Gamma \vdash_{\mathbf{I}} A$ then $ \mathbf{S} \vDash \Gamma \Rightarrow A$.
\end{itemize}
\end{thm}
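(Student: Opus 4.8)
The plan is to argue by induction on the length of the derivation, showing that each rule of the system in question preserves validity over the class of modal spaces attached to it. Since all six systems share the structural and propositional rules and differ only in which of $sCoJ$, $CoJ$, $J$ they add, it is enough to verify the shared rules once over an arbitrary modal space and then check each additional rule over exactly the class singled out by its name (every class in Definition \ref{t2-8} being contained in $\mathbf{MS}$, the soundness of the shared rules is inherited by the subclasses). Throughout, for a context $\Gamma = \{\gamma_1,\dots,\gamma_k\}$ I write $V(\Gamma) := \bigwedge_i V(\gamma_i)$, so that by Definition \ref{t4-1} the judgement $(X,J,V) \vDash \Gamma \Rightarrow A$ is exactly the inequality $V(\Gamma) \le V(A)$ in the locale $X$.

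The structural and propositional rules reduce to elementary locale arithmetic: $cut$ is transitivity of $\le$ together with $V(\Gamma_0)\wedge V(\Gamma_1) \le V(A)\wedge V(\Gamma_1)$ and the second premise; the rules for $\wedge$, $\vee$, $\top$, $\bot$ use the (co)universal properties of finite meets, finite joins, $1$ and $0$; and $\vee E$ additionally uses the distributivity of the frame $X$. The modal content sits in $F$ and the two implication rules, where the single-element restriction on $\Gamma$ is essential. For $F$, monotonicity of $J$ turns $V(\gamma) \le V(A)$ into $JV(\gamma) \le JV(A)$, i.e. $V(J\gamma) \le V(JA)$. For $\rightarrow I$, the premise says $JV(\gamma)\wedge V(A) \le V(B)$, and Theorem \ref{t2-5} with $c=V(\gamma)$, $a=V(A)$, $b=V(B)$ rewrites this as $V(\gamma) \le V(A)\rightarrow_J V(B) = V(A\rightarrow B)$. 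For $\rightarrow E$, applying the forward direction of Theorem \ref{t2-5} to the trivial inequality $V(A\rightarrow B) \le V(A)\rightarrow_J V(B)$ gives $JV(A\rightarrow B)\wedge V(A) \le V(B)$; meeting the premises $V(\Gamma_0) \le V(A)$ and $V(\Gamma_1) \le V(J(A\rightarrow B)) = JV(A\rightarrow B)$ then yields $V(\Gamma_0)\wedge V(\Gamma_1) \le V(B)$.

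The additional rules are immediate from Definition \ref{t2-8}: $J$ uses $Ja \le a$ (temporality) to pass from $V(\Gamma) \le JV(A)$ to $V(\Gamma) \le V(A)$; $CoJ$ uses $a \le Ja$ (cotemporality) in the other direction; and $sCoJ$ uses that $Ja = 0$ forces $a=0$ (semi-cotemporality) to pass from $JV(A) \le 0$ to $V(A) \le 0$. Hence $\mathbf{mJ}$ is sound over $\mathbf{MS}$, $\mathbf{sCoJ}$ over $\mathbf{sCoTS}$, $\mathbf{CoJ}$ over $\mathbf{CoTS}$, $\mathbf{J}$ over $\mathbf{TS}$, and $\mathbf{sI}$ (adding $sCoJ$ and $J$) and $\mathbf{I}$ (adding $CoJ$ and $J$) over the intersections $\mathbf{sS} = \mathbf{sCoTS}\cap\mathbf{TS}$ and $\mathbf{S}$ respectively. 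I do not expect a serious obstacle here; the only delicate points are the context bookkeeping in $F$ and $\rightarrow I$ — these rules are stated for singleton $\Gamma$ precisely because $J$ need not preserve binary meets, so one must not apply them to compound contexts — and invoking the adjunction of Theorem \ref{t2-5} in the correct direction for each of $\rightarrow I$ and $\rightarrow E$.
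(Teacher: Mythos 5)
Your proof is correct and is exactly the rule-by-rule induction the paper has in mind; the paper itself merely asserts that soundness is clear and skips the details, and your verification (shared rules over arbitrary modal spaces via Theorem \ref{t2-5} and locale arithmetic, additional rules via the corresponding conditions of Definition \ref{t2-8}) supplies precisely those omitted details, including the correct handling of the singleton-context restriction in $F$ and $\rightarrow I$.
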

\begin{proof}
Since the logics are just the syntactical representations of the structure of the modal spaces, the soundness theorem is clear and we will skip the details of its proof.
\end{proof}
To prove the completeness, we need the following lemma:
\begin{lem}\label{t4-3}
Let $\mathbb{A}=(A, \leq, \wedge, \vee, 0, 1, J, \rightarrow)$ be a structure where $(A, \leq)$ is a poset with meet $\wedge$, joint $\vee$, the zero element $0$, the one element $1$ and $J$ is monotone such that $J(-)\wedge a$ is a left adjoint to $a \rightarrow (-)$ for all $a \in A$. Then there exists a modal space $(X, I)$ such that the structure $\mathbb{A}$ is embeddable in $(X, I)$. Moreover, if $\mathbb{A}$ has any of the properties of the Definition \ref{t2-8}, so does $(X, I)$.
\end{lem}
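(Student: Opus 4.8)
The plan is to prove this by a representation argument in the spirit of Stone-type dualities for residuated structures: embed the abstract structure $\mathbb{A}$ into a concrete modal space of the form $(P(W), I)$ for a suitable relational frame $(W, R)$, where $W$ is the set of prime filters of $\mathbb{A}$ (or some variant thereof). First I would recall that $(A, \leq, \wedge, \vee, 0, 1)$ is a distributive lattice — distributivity is forced here because $J(-) \wedge a$ being a left adjoint means $a \wedge (-)$ preserves all existing joins, in particular binary ones. Hence by Birkhoff/Stone prime filter representation, the map $a \mapsto \hat{a} = \{P \mid a \in P\}$ is a lattice embedding of $\mathbb{A}$ into $P(W)$, where $W$ is the set of prime filters of $\mathbb{A}$. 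This handles the lattice part: $\widehat{a \wedge b} = \hat a \cap \hat b$, $\widehat{a \vee b} = \hat a \cup \hat b$, $\hat 0 = \emptyset$, $\hat 1 = W$, and $a \leq b$ iff $\hat a \subseteq \hat b$.

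Next I would define the relation $R$ on $W$ so that the induced operator $I(U) = \{P \mid \exists Q\, (R(Q,P) \wedge Q \in U)\}$ restricts correctly along the embedding, i.e. $I(\hat a) = \widehat{Ja}$. The natural choice, reading off the intended semantics of $J$ as a "possibility-like" operator on opens, is to set $R(Q, P)$ iff for all $a \in A$, $a \in Q$ implies $Ja \in P$ — equivalently $J^{-1}[P] \subseteq Q$ where $J^{-1}[P] = \{a \mid Ja \in P\}$. Monotonicity of $J$ ensures $J^{-1}[P]$ is a filter, and one checks it is the relevant witness. The content to verify is the equality $I(\hat a) = \widehat{Ja}$, whose nontrivial inclusion ($\widehat{Ja} \subseteq I(\hat a)$, i.e. if $Ja \in P$ then there is a prime filter $Q \ni a$ with $R(Q,P)$) requires a prime filter extension lemma: extend the filter generated by $a$ together with the constraint coming from $P$ to a prime filter, using a Zorn's-lemma/Birkhoff separation argument and distributivity. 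Since $I$ is automatically monotone and join-preserving (as in Example \ref{t2-3}), $(P(W), I)$ is a modal space.

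The remaining and most delicate point is the implication: one must check $\widehat{a \rightarrow b} = \hat a \rightarrow_I \hat b$, where the right side is the weak implication of the modal space $(P(W), I)$ from Theorem \ref{t2-5}. Here I would use the adjunction characterization: in $\mathbb{A}$, $Jc \wedge a \leq b \iff c \leq a \rightarrow b$, and in $(P(W), I)$, $I(U) \cap \hat a \subseteq \hat b \iff U \subseteq \hat a \rightarrow_I \hat b$. Unwinding $\widehat{a \rightarrow b} = \bigcup \{\hat c \mid c \leq a \rightarrow b\} = \bigcup\{\hat c \mid Jc \wedge a \leq b\}$ and comparing with $\hat a \rightarrow_I \hat b = \bigcup\{U \in P(W) \mid I(U) \cap \hat a \subseteq \hat b\}$, the $\subseteq$ direction is immediate from $I(\hat c) = \widehat{Jc}$ and the lattice embedding. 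The $\supseteq$ direction is the main obstacle: given an arbitrary $U \subseteq W$ (not of the form $\hat c$) with $I(U) \cap \hat a \subseteq \hat b$, one must show every prime filter $P \in U$ lies in $\widehat{a \rightarrow b}$, i.e. $a \rightarrow b \in P$; this again reduces to a prime-filter separation argument showing that if $a \rightarrow b \notin P$ then one can build a prime filter witnessing a violation of $I(U) \cap \hat a \subseteq \hat b$. I expect this to be where the real work lies, and it is the standard pain point in canonical-model constructions for implicational fragments.

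Finally, for the preservation of the classes in Definition \ref{t2-8}, I would verify each condition transfers: if $\mathbb{A}$ is semi-cotemporal ($Ja = 0 \Rightarrow a = 0$), semi-temporal ($JJa \leq Ja$), temporal ($Ja \leq a$), or cotemporal ($a \leq Ja$), then the corresponding frame condition holds on $(W, R)$ — seriality, transitivity, reflexivity of $R$ restricted appropriately — and hence $(P(W), I)$ lands in the same class; these are routine checks using $I(\hat a) = \widehat{Ja}$ together with the faithfulness of the embedding. For the boolean case one replaces prime filters by ultrafilters and distributivity by the boolean structure. The overall shape is thus: (1) distributivity and prime-filter lattice embedding; (2) define $R$, prove $I \hat a = \widehat{Ja}$ via a filter-extension lemma; (3) prove $\widehat{a\rightarrow b} = \hat a \rightarrow_I \hat b$ via adjunction plus a separation lemma — the hard step; (4) transfer the Definition \ref{t2-8} properties.
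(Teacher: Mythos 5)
Your route is genuinely different from the paper's: the paper does not go through prime filters at all, but uses a choice-free join-completion, taking $X$ to be the set of downward-closed subsets of $A$ that contain $0$ and are closed under existing joins, embedding $\mathbb{A}$ by principal downsets $a \mapsto \{b \mid b \leq a\}$, and setting $I(V)=\{u \mid u \leq Jv,\ v \in V\}$; with that choice every preservation claim (including the ``Moreover'' clause) is essentially immediate. Your canonical-frame plan could be pushed through in the distributive case --- the two filter-extension arguments you defer do work, using that the adjunction at $a=1$ gives $J(b \vee c)=Jb \vee Jc$ --- but as written it has two real gaps. First, your justification of distributivity is incorrect: the hypothesis says that $c \mapsto Jc \wedge a$ preserves joins, not that $c \mapsto c \wedge a$ does. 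In fact the stated hypotheses are satisfied on any lattice by taking $J$ the constant-$0$ map and $a \rightarrow b := 1$, so distributivity is not a consequence of them; yet your entire prime-filter representation (injectivity, preservation of $\vee$, and both separation lemmas) depends on it. It does hold in the intended application (Lindenbaum algebras of the $J$-logics, via the context-carrying $\vee$-elimination), and the paper's construction tacitly needs it as well, but you must either assume it or obtain it from somewhere other than the adjunction.

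Second, and more damaging, the transfer of the Definition \ref{t2-8} properties fails for the temporal case on the space you build. Temporality of $(P(W), I)$ means $I(U) \subseteq U$ for \emph{every} $U \subseteq W$; taking $U=\{Q\}$ this forces every $R$-successor of $Q$ to be $Q$ itself, which reflexivity of $R$ does not give. Concretely, let $\mathbb{A}$ be the three-element chain with $J=\mathrm{id}$ and the Heyting implication: $\mathbb{A}$ is temporal, $R$ becomes inclusion of prime filters, and $I(\{Q\}) \nsubseteq \{Q\}$ whenever $Q$ is properly contained in another prime filter. So your construction does not land in $\mathbf{TS}$, $\mathbf{sS}$ or $\mathbf{S}$ --- precisely the classes needed for $\mathbf{J}$, $\mathbf{sI}$ and $\mathbf{I}$ in Theorem \ref{t4-4}. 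Repairing this forces you to change the locale (e.g.\ to the $R$-upward-closed sets as in Example \ref{t2-4}, noting that temporality of $\mathbb{A}$ makes each $\widehat{a}$ upward closed) and then redo the join and implication verifications there; the paper's downset-completion avoids all of this and transfers every class uniformly, without any appeal to the prime ideal theorem. (Minor slips: your condition ``$a \in Q$ implies $Ja \in P$'' is $Q \subseteq J^{-1}[P]$, not $J^{-1}[P] \subseteq Q$, and $J^{-1}[P]$ need not be a filter since $J$ need not preserve meets; neither is fatal, but the main two points above are.)
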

\begin{proof}
The proof is just the usual proof of the Stone type duality. Define $X$ as the set of all downward-closed subsets of $A$ which includes $0$ and closed under all joins. Define the order and the meet as the inclusion and the intersection and the join of $\{U_i\}_{i \in I}$ as 
\[
\{z| \exists u_i \in U_i, z \leq \bigvee_{i \in I} u_i\}
\]
Also define $1$ as $A$, $0$ as $\{0\}$ and $I(V)=\{u| u \leq Jv, v \in V\}$. It is not hard to check that $X$ is a locale and $I$ is join-preserving. Hence, $(X, I)$ is a modal space. Now, define $U : A \to X$ as $U(a)=\{b|b \leq a\}$. Since $U(a)$ is downward-closed and closed under joins, $U$ is well-defined. It is easy to prove that $U$ preserves all the lattice structure including $J$ and $\rightarrow$ and all the properties of the Definition \ref{t2-8}.
\end{proof}
\begin{thm}\label{t4-4}(Completeness Theorem)
\begin{itemize}
\item[$(i)$]
If $ \mathbf{MS} \vDash \Gamma \Rightarrow A$ then $\Gamma \vdash_{\mathbf{mJ}} A$.
\item[$(ii)$]
If $ \mathbf{sCoTS} \vDash \Gamma \Rightarrow A$ then $\Gamma \vdash_{\mathbf{sCoJ}} A$.
\item[$(iii)$]
If $ \mathbf{CoTS} \vDash \Gamma \Rightarrow A$ then $\Gamma \vdash_{\mathbf{CoJ}} A$.
\item[$(iv)$]
If $ \mathbf{TS} \vDash \Gamma \Rightarrow A$ then $\Gamma \vdash_{\mathbf{J}} A$.
\item[$(v)$]
If $ \mathbf{sS} \vDash \Gamma \Rightarrow A$ then $\Gamma \vdash_{\mathbf{sI}} A$.
\item[$(vi)$]
If $ \mathbf{S} \vDash \Gamma \Rightarrow A$ then $\Gamma \vdash_{\mathbf{I}} A$.
\end{itemize}
\end{thm}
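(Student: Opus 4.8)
The plan is to prove all six items at once by contraposition, via a Lindenbaum--Tarski term model that is then fed into Lemma~\ref{t4-3}. Fix one of the logics $\mathbf{L}$ and let $\mathcal{C}$ be the matching class of modal spaces from Definition~\ref{t2-8}. Suppose $\Gamma \nvdash_{\mathbf{L}} A$; I want a modal space $(X,I) \in \mathcal{C}$ and a valuation $V$ with $\bigwedge_{\gamma\in\Gamma} V(\gamma) \not\leq V(A)$, which is precisely the negation of $\mathcal{C} \vDash \Gamma \Rightarrow A$.

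First I would build the Lindenbaum algebra. On formulas of $\mathcal{L}_J$ set $B \preceq B'$ iff $B \vdash_{\mathbf{L}} B'$; reflexivity is the identity axiom, transitivity is $cut$, and quotienting by the induced equivalence $\equiv$ yields a poset $\mathbb{A} = \mathcal{L}_J/{\equiv}$. Define $[B]\wedge[B'] = [B\wedge B']$, $[B]\vee[B'] = [B\vee B']$, $0 = [\bot]$, $1 = [\top]$, $J[B] = [JB]$ and $[B]\rightarrow[B'] = [B\rightarrow B']$. The rules $\wedge I/\wedge E$, $\vee I/\vee E$, $\top$ and $\bot$ make these into honest finite meets, joins, bottom and top; rule $F$, applied with a one-element context, shows that $J$ is monotone and well defined on classes. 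The crucial point is the adjunction $J(-)\wedge a \dashv a\rightarrow(-)$: unwinding, $J[C']\wedge[A']\leq[B']$ iff $[C']\leq[A']\rightarrow[B']$, where the forward direction is rule $\rightarrow I$ applied to the one-element context $\{C'\}$ (after rewriting $J[C']\wedge[A']\leq[B']$ as $JC', A'\vdash_{\mathbf{L}} B'$), and the backward direction is rule $F$ followed by $\rightarrow E$ against the identity $A'\vdash_{\mathbf{L}} A'$. The adjunction in turn gives monotonicity of $\rightarrow$, hence its well-definedness on classes, and it makes $\mathbb{A}$ a structure of exactly the shape hypothesised in Lemma~\ref{t4-3}. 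Using $\wedge I$, $\wedge E$ and the structural rules one also gets $\Gamma\vdash_{\mathbf{L}} B$ iff $\bigwedge_{\gamma\in\Gamma}[\gamma]\leq[B]$ in $\mathbb{A}$ (the empty-context case reading as $[\top]\leq[B]$, which is legitimate since $\top\vdash_{\mathbf{L}} B$ entails $\vdash_{\mathbf{L}} B$ by $cut$ against the $\top$-axiom).

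The additional rules then pin down the extra structure. If $\mathbf{L}$ has $sCoJ$, then $J[B]\leq 0$ forces $[B]\leq 0$, so $\mathbb{A}$ is semi-cotemporal; if it has $CoJ$, instantiating that rule at $B\vdash_{\mathbf{L}} B$ gives $[B]\leq J[B]$, so $\mathbb{A}$ is cotemporal; if it has $J$, instantiating at $JB\vdash_{\mathbf{L}} JB$ gives $J[B]\leq[B]$, so $\mathbb{A}$ is temporal. In each of the six cases $\mathbb{A}$ therefore lies in the class from Definition~\ref{t2-8} that names $\mathcal{C}$. Now apply Lemma~\ref{t4-3}: there are a modal space $(X,I)$ --- which, by the ``moreover'' clause of the lemma, lies in $\mathcal{C}$ --- and an order-reflecting embedding $U : \mathbb{A} \to X$ preserving $\wedge, \vee, 0, 1, J$ and $\rightarrow$. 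Set $V(p) = U[p]$; a clause-by-clause induction on $A$ matching Definition~\ref{t4-1} against the preservation properties of $U$ gives $V(A) = U[A]$ for all $A$. Since $\Gamma \nvdash_{\mathbf{L}} A$ we get $\bigwedge_{\gamma\in\Gamma}[\gamma] \not\leq [A]$ in $\mathbb{A}$, and as $U$ reflects order, $\bigwedge_{\gamma\in\Gamma} V(\gamma) = U\big(\bigwedge_{\gamma\in\Gamma}[\gamma]\big) \not\leq U[A] = V(A)$. Thus $(X,I,V) \nvDash \Gamma \Rightarrow A$, completing the contrapositive.

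\textbf{The main obstacle.} The one genuinely delicate verification is the adjunction step: one must check that the weak elimination/introduction pair $\rightarrow E$, $\rightarrow I$ together with $F$ yields \emph{exactly} $Jc\wedge a \leq b \iff c \leq a\rightarrow b$ on single formulas, and that the one-element restriction on $\Gamma$ in $\rightarrow I$ and $F$ really does block anything stronger (in particular a full deduction theorem) --- this is precisely why the resulting implication is a weak exponential rather than a Heyting implication. Once that is secured, the well-definedness of the quotient operations, the propositional fragment, and the transfer of the Definition~\ref{t2-8} conditions are all routine, and the step from $\mathbb{A}$ to a genuine topological modal space is handled verbatim by Lemma~\ref{t4-3}. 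A minor bookkeeping point to flag is that $\Gamma$ is taken finite throughout, matching the finitary deduction rules; this is what makes the term-model equivalence $\Gamma\vdash_{\mathbf{L}} B \iff \bigwedge_{\gamma\in\Gamma}[\gamma]\leq[B]$ meaningful.
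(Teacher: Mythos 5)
Your proposal is correct and follows essentially the same route as the paper: build the Lindenbaum--Tarski quotient of $\mathcal{L}_J$ under mutual derivability, verify it is a structure of the kind required by Lemma \ref{t4-3} (with the adjunction $J(-)\wedge a \dashv a\rightarrow(-)$ coming from $F$, $\rightarrow I$ and $\rightarrow E$, and the additional rules yielding the corresponding conditions of Definition \ref{t2-8}), and then embed it into a modal space to produce the countermodel. Your contrapositive phrasing and the explicit verification of the adjunction are just a more detailed rendering of the paper's argument, not a different method.
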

\begin{proof}
Pick one of the logics that we want to prove the completeness for. By $\vdash$ we mean the provability in that system. The proof is the usual Lindenbaum type proof augmented with the embedding of Lemma \ref{t4-3}. Define $X$ to be the set of all formulas of the language $\mathcal{L}_J$. Define the equivalence relation $\equiv$ as $A \equiv B$ iff $A \vdash B$ and $B \vdash A$. It is clear that $(X/\equiv, \vdash)$ is a poset with all finite meets and joins. Moreover it has the zero and one elements and $J$ and $\rightarrow$ such that $J(-) \times A$ is a left adjoint for $A \rightarrow (-)$. By Lemma \ref{t4-3}, there exists a modal space $(Y, I)$ into which we can embed the structure of $X$ by some morphism $\phi$. Define $V(p)=\phi(p)$. Therefore, it is easy to check that for all formulas $C$, $V(C)=\phi(C)$. Therefore, since $(Y, I, V) \vDash \Gamma \vdash A$ we have $V(\bigwedge \Gamma) \leq V(A)$. Hence $\phi(\bigwedge \Gamma) \leq \phi(A)$. Since $\phi$ is an embedding, $\Gamma \vdash A$ which completes the proof.
\end{proof}
\begin{thm}\label{t4-5}
Assume $X$ and $Y$ are modal spaces, $Y$ is Alexandrov and $f: X \to Y$ is a continuous surjection. Then for any formula $A \in \mathcal{L}_J$, if $X \vDash_{\mathcal{C}} A$ then $Y \vDash_{\mathcal{C}} A$ for any class $\mathcal{C}$ as in the Definition \ref{t2-8}.
\end{thm}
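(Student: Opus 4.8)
The plan is to adapt the proof of Theorem \ref{t3-8} to the language $\mathcal{L}_J$, using Corollary \ref{t2-7} to transport the modal structure from $Y$ back to $X$ and then reading off the valuation identity $V'(-)=f^{-1}(V(-))$.

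First I would fix an arbitrary modal space $(Y,J)$ lying in the class $\mathcal{C}$ and an arbitrary valuation $V$ on it. Since $Y$ is Alexandrov, Corollary \ref{t2-7} supplies a left adjoint $f_!\dashv f^{-1}$ together with a monotone join-preserving operator $I=f^{-1}\circ J\circ f_!$ that makes $(X,I)$ a modal space and satisfies $f^{-1}(a\rightarrow_J b)=f^{-1}(a)\rightarrow_I f^{-1}(b)$ for all $a,b\in Y$. Put $V'(p)=f^{-1}(V(p))$ and prove by induction on $A\in\mathcal{L}_J$ that $V'(A)=f^{-1}(V(A))$.

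The cases $\top,\bot,\wedge,\vee$ are immediate because $f^{-1}$, being a frame homomorphism, preserves $0$, $1$, finite meets and all joins; the case of $\rightarrow$ is precisely the identity furnished by Corollary \ref{t2-7}. The only genuinely new case compared with Theorem \ref{t3-8} is the modal connective $J$: I must verify $V'(JA)=I\,V'(A)$, i.e. $I\,f^{-1}(V(A))=f^{-1}(J\,V(A))$. Writing $b=V(A)$ and unfolding $I$, this amounts to $f^{-1}Jf_!f^{-1}(b)=f^{-1}(Jb)$, which holds because $f_!f^{-1}=\mathrm{id}$: indeed $f$ is surjective, so $f^{-1}$ is injective, and combining the unit $f^{-1}(b)\leq f^{-1}f_!f^{-1}(b)$ with the image under $f^{-1}$ of the counit $f_!f^{-1}(b)\leq b$ gives $f^{-1}f_!f^{-1}(b)=f^{-1}(b)$, hence $f_!f^{-1}(b)=b$. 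With the induction complete, the hypothesis $X\vDash_{\mathcal{C}}A$ yields $V'(A)=1$, that is $f^{-1}(V(A))=1=f^{-1}(1)$, and injectivity of $f^{-1}$ forces $V(A)=1$; since $(Y,J)$ and $V$ were arbitrary, $Y\vDash_{\mathcal{C}}A$.

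What remains — and what I expect to be the crux — is to check that the constructed $(X,I)$ really lies in $\mathcal{C}$ whenever $(Y,J)$ does, since otherwise the hypothesis $X\vDash_{\mathcal{C}}A$ cannot be invoked. For $\mathbf{sCoTS}$, $\mathbf{sTS}$ and $\mathbf{CoTS}$ the verifications are the short ones already carried out at the end of the proof of Theorem \ref{t3-8} (using $f_!f^{-1}\leq\mathrm{id}$, $\mathrm{id}\leq f^{-1}f_!$, monotonicity of $J$, and that $f_!$ reflects $0$), and $\mathbf{MS}$ is trivial. The delicate point is the temporal condition $Ja\leq a$, hence also the derived classes $\mathbf{sS}$ and $\mathbf{S}$ built from it: the operator $I=f^{-1}Jf_!$ is in general inflationary rather than contractive when $f$ is not injective — already for $J=\mathrm{id}$ one gets $I=f^{-1}f_!\geq\mathrm{id}$ — so $Ja\leq a$ need not descend from $Y$ to $X$, and since $I=f^{-1}Jf_!$ is moreover the smallest monotone operator on $X$ agreeing with $J$ along $f^{-1}$, no alternative choice repairs this directly. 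I would therefore expect the temporal cases to require either a strengthening of the hypotheses on $f$ or a subtler construction of the modal operator on $X$, and this is the step on which I would concentrate the effort.
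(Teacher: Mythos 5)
Your argument is, in substance, the paper's own: the printed proof of this theorem just says it is similar to that of Theorem \ref{t3-8}, i.e. the operator $I=f^{-1}Jf_!$ from Corollary \ref{t2-7}, the valuation $V'=f^{-1}\circ V$, the induction on $A$, the use of injectivity of $f^{-1}$ (surjectivity of $f$) at the end, and the transfer of the conditions of Definition \ref{t2-8}. Your handling of the only genuinely new case, the connective $J$ of $\mathcal{L}_J$, is correct: $f_!f^{-1}=\mathrm{id}$ because $f$ is onto, hence $I f^{-1}=f^{-1}J$.

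As for the point you flag: the paper does not resolve it either. The proof of Theorem \ref{t3-8} verifies only $\mathbf{sCoTS}$, $\mathbf{sTS}$ and $\mathbf{CoTS}$; the temporal condition $Ia\le a$ is never checked, and as you observe it genuinely fails for $I=f^{-1}Jf_!$ (take $Y$ a one-point space with $J=\mathrm{id}$ and $X=\mathbb{R}$: then $I(U)=\mathbb{R}$ for every nonempty open $U$). So for $\mathcal{C}\in\{\mathbf{TS},\mathbf{sS},\mathbf{S}\}$ your proposal and the paper share the same gap, and it is not innocuous, since Theorem \ref{t4-14} invokes exactly these classes; repairing it needs either extra hypotheses on $f$ or a different construction of the operator on $X$, none of which appears in the paper. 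One correction to your diagnosis: $I=f^{-1}Jf_!$ is the largest, not the smallest, monotone operator satisfying $I f^{-1}=f^{-1}J$ (any such $I'$ has $I'(a)\le I'(f^{-1}f_!(a))=I(a)$ since $a\le f^{-1}f_!(a)$), so a smaller, possibly temporal, choice is not excluded on those grounds; the real obstruction is that the implication-transfer argument of Theorem \ref{t2-6} is tied to this particular $I$.
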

\begin{proof}
It is similar to the proof of the Theorem \ref{t3-8}.
\end{proof}
\begin{thm}\label{t4-6}(Completeness Theorem, Strong version) Let $X$ be a topological space with continuum many connected components, then:
\begin{itemize}
\item[$(i)$]
If $ X \vDash_{\mathbf{MS}} \Gamma \Rightarrow A$ then $\Gamma \vdash_{\mathbf{mJ}} A$.
\item[$(ii)$]
If $ X \vDash_{\mathbf{sCoTS}} \Gamma \Rightarrow A$ then $\Gamma \vdash_{\mathbf{sCoJ}} A$.
\item[$(iii)$]
If $ X \vDash_{\mathbf{CoTS}} \Gamma \Rightarrow A$ then $\Gamma \vdash_{\mathbf{CoJ}} A$.
\item[$(iv)$]
If $ X \vDash_{\mathbf{TS}} \Gamma \Rightarrow A$ then $\Gamma \vdash_{\mathbf{J}} A$.
\item[$(v)$]
If $ X \vDash_{\mathbf{sS}} \Gamma \Rightarrow A$ then $\Gamma \vdash_{\mathbf{sI}} A$.
\item[$(vi)$]
If $ X \vDash_{\mathbf{S}} \Gamma \Rightarrow A$ then $\Gamma \vdash_{\mathbf{I}} A$.
\end{itemize}
\end{thm}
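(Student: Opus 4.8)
The plan is to mirror the proof of Corollary \ref{t3-9}, but with the Lindenbaum-style model used in Theorem \ref{t4-4} playing the role that the finite Kripke frames played in the boolean case. Fix one of the six logics, write $\vdash$ for its provability relation and $\mathcal{C}$ for the corresponding class from Definition \ref{t2-8}. Arguing by contraposition, I assume $\Gamma \nvdash A$ and aim to exhibit a modal operator $I$ on the \emph{given} space $X$ with $(X,I)\in\mathcal{C}$, together with a valuation, refuting $\Gamma \Rightarrow A$; this contradicts $X \vDash_{\mathcal{C}} \Gamma \Rightarrow A$ and hence forces $\Gamma \vdash A$. All six items will come out at once, since both Lemma \ref{t4-3} and Theorem \ref{t4-5} are uniform in the class $\mathcal{C}$ from Definition \ref{t2-8}.

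First (the small canonical model), I would extract from the proof of Theorem \ref{t4-4} a single modal space $(Y,I)$ which (a) lies in $\mathcal{C}$, (b) refutes $\Gamma \Rightarrow A$ under a suitable valuation, (c) is Alexandrov, and (d) has at most $2^{\aleph_0}$ points. For (a) and (b): the Lindenbaum algebra of the logic is a structure $\mathbb{A}$ as in Lemma \ref{t4-3} carrying whichever properties of Definition \ref{t2-8} correspond to $\mathcal{C}$, so the embedding $\mathbb{A}\hookrightarrow(Y,I)$ of Lemma \ref{t4-3} lands in $\mathcal{C}$, and from $\Gamma\nvdash A$ the image of $\bigwedge\Gamma$ is not below the image of $A$. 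For (d): the language $\mathcal{L}_J$ is countable, hence $\mathbb{A}$ is countable, and the carrier of $(Y,I)$ — the down-sets of $\mathbb{A}$ containing $0$ and closed under the relevant joins — has size at most $2^{\aleph_0}$. For (c) one checks that in that carrier arbitrary meets are computed as intersections, so $(Y,I)$ is the frame of opens of an Alexandrov space. (Alternatively, if a finite model property for these $J$-logics is available, one may instead take $Y=P(W)$ for a finite relational frame $(W,R)$ as in Example \ref{t2-3}, which is automatically finite, Alexandrov, and in $\mathcal{C}$ by the frame-condition computations already carried out in Theorem \ref{t3-7}.)

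Second (realizing $Y$ as a quotient of $X$), since $X$ has continuum many connected components and the space underlying $Y$ has at most $2^{\aleph_0}$ points, I would construct a continuous surjection $f:X\to Y$: partition the connected components of $X$ into $|Y|$ nonempty blocks indexed by the points of $Y$ and collapse each block to its index point. The only nontrivial point is continuity, i.e. that $f^{-1}$ of every open of $Y$ is open in $X$, and this is exactly where the bound on the number of components of $X$ versus the number of opens of $Y$ enters, together with the behaviour of unions of connected components relative to the clopen algebra of $X$. This step is the main obstacle: it is the point left implicit in Corollary \ref{t3-9} (there with finite discrete $W$ and ``infinitely many components''), and here it must be executed for an Alexandrov target and pinned to the precise hypothesis on $X$ — connected components versus the size of $\mathrm{Clop}(X)$ — that makes such a surjection exist; the cardinality rises to the continuum precisely because the $J$-logic completeness argument passes through the possibly infinite model of Lemma \ref{t4-3} rather than through finite frames.

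Third (transfer), with $Y$ Alexandrov and $f:X\to Y$ a continuous surjection, Theorem \ref{t4-5} applies — its statement extends verbatim from formulas to sequents $\Gamma \Rightarrow A$, since $f^{-1}$ preserves finite meets and, being split injective via the left adjoint $f_!$ supplied by Corollary \ref{t2-7}, reflects the order, so $\bigwedge_{\gamma} f^{-1}V(\gamma) \le f^{-1}V(A)$ iff $\bigwedge_{\gamma} V(\gamma) \le V(A)$. Thus $X \vDash_{\mathcal{C}} \Gamma \Rightarrow A$ implies $Y \vDash_{\mathcal{C}} \Gamma \Rightarrow A$; contrapositively, the refutation of $\Gamma \Rightarrow A$ on $(Y,I)$ from the first step pulls back along $f^{-1}$ to a refutation on $X$ with a modal operator $I'$ for which $(X,I')\in\mathcal{C}$ (again by Corollary \ref{t2-7} and Theorem \ref{t4-5}), contradicting the hypothesis and completing the proof of all six cases.
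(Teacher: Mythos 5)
Your proposal follows essentially the same route as the paper's own proof, which consists precisely of the observations you assemble: the Lindenbaum-style spaces from the proof of Theorem \ref{t4-4} (via Lemma \ref{t4-3}) are Alexandrov and of cardinality at most the continuum, any such space is a continuous surjective image of a space with continuum many connected components, and Theorem \ref{t4-5} then transfers the refutation back to $X$. The surjection step you single out as the main obstacle is exactly the point the paper asserts without further detail, so your write-up matches the paper's argument and is, if anything, more explicit about where the hypothesis on $X$ is used.
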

\begin{proof}
The proof is similar to the proof of the Theorem \ref{t3-9}. Note that the spaces that we used for completeness are Alexandrof and has cardinality at most equal to the continuum and any topological space with this size is a continuous image of a topological space with continuum many connected components.
\end{proof}
Now let us review some important sub-intuitionistic logics which were introduced in \cite{Vi}, \cite{Ard} and \cite{Do}. For this purpose, we need to introduce some rules in the usual natural deduction system:

\begin{flushleft}
   \textbf{Propositional Rules:}
   \end{flushleft}
\begin{center}
  	\begin{tabular}{c c}
        
        \AxiomC{$A$}
    		\LeftLabel{\tiny{$\top$}}
    		\UnaryInfC{$\top$}
    		\DisplayProof
    		& \;\;
    		\AxiomC{$\bot$}
    		\LeftLabel{\tiny{$\bot$}}
    		\UnaryInfC{$A$}
    		\DisplayProof
    		\\[4 ex]  	    
  	    
		\AxiomC{$ A$}
	   	\AxiomC{$ B$}
	   	\LeftLabel{\tiny{$\wedge I$}}
	   	\BinaryInfC{$A \wedge B $}
	   	\DisplayProof
		&
		\AxiomC{$ A \wedge B $}
		\LeftLabel{\tiny{$\wedge E$}}
		\UnaryInfC{$ A$}
		\DisplayProof
			
		\AxiomC{$ A \wedge B $}
		\LeftLabel{\tiny{$\wedge E$}}
		\UnaryInfC{$ B$}
		\DisplayProof
	   		\\[4 ex]
	   		
        \AxiomC{$  A $}
        \LeftLabel{\tiny{$\vee I$}}
   		\UnaryInfC{$ A \vee B$}
   		\DisplayProof
   		
   		\AxiomC{$  B $}
   		\LeftLabel{\tiny{$\vee I$}}
   		\UnaryInfC{$ A \vee B$}
   		\DisplayProof
			&
   		\AxiomC{$A \lor B$}
        \AxiomC{[$A$]}
        \noLine
   		\UnaryInfC{$\mathcal{D}$}
        \noLine
        \UnaryInfC{$C$}
        
        \AxiomC{[$B$]}
        \noLine
   		\UnaryInfC{$\mathcal{D'}$}
        \noLine
        \UnaryInfC{$C$}
        \LeftLabel{\tiny{$\vee E$}}
        \TrinaryInfC{$C$}
        \DisplayProof
    		\\[6 ex]
   	
   \end{tabular}
   \end{center}

\begin{center}
\begin{tabular}{c}

   		\AxiomC{[$A$]}
   		\noLine
   		\UnaryInfC{$\mathcal{D}$}
   		\noLine
   		\UnaryInfC{$B$}
   		\LeftLabel{\tiny{$\rightarrow I$}}
   		\UnaryInfC{$ A \rightarrow B$}
   		\DisplayProof 
   		
\end{tabular}
\end{center}

\begin{flushleft}
   \textbf{Formalized Rules:}
   \end{flushleft}
\begin{center}
\begin{tabular}{c c}
   		   		
   		\AxiomC{$A \rightarrow B$}
    		\AxiomC{$A \rightarrow C$}
    		\LeftLabel{\tiny{$(\wedge I)_f$}}
    		\BinaryInfC{$A \rightarrow B \wedge C$}
    		\DisplayProof
    		&
    		\AxiomC{$A \rightarrow C$}
    		\AxiomC{$B \rightarrow C$}
    		\LeftLabel{\tiny{$(\vee E)_f$}}
    		\BinaryInfC{$A \vee B \rightarrow C$}
    		\DisplayProof
    		\\[4 ex]
    		
\end{tabular}
\end{center}
\begin{center}
      \begin{tabular}{c}
  
    		\AxiomC{$A \rightarrow B$}
    		\AxiomC{$B \rightarrow C$}
    		\LeftLabel{\tiny{$tr_f$}}
    		\BinaryInfC{$A \rightarrow C$}
    		\DisplayProof
    		\\[4 ex]
   		
	\end{tabular}
\end{center}
Moreover, consider the following additional rules:
\begin{flushleft}
   \textbf{Additional Rules:}
   \end{flushleft}
\begin{center}
  	\begin{tabular}{c c c}
		\AxiomC{$\top \rightarrow \bot$}
		\LeftLabel{\tiny{$E$}}
		\UnaryInfC{$\bot$}
		\DisplayProof
		&
        \AxiomC{$ A \wedge (A \rightarrow B)$}
        \LeftLabel{\tiny{$T$}}
        \UnaryInfC{$ B $}
        \DisplayProof
        &
        \AxiomC{$A$}
		\LeftLabel{\tiny{$cur$}}
		\UnaryInfC{$\top \to A$}
		\DisplayProof
		\\[3 ex]
	\end{tabular}
\end{center}
The condition for the rule $\rightarrow I$ is that $A$ should be the only assumption to deduce $B$. The logic $\mathbf{KPC}$ is defined as the system which consists of all the propositional rules and all the formalized rules. Then $\mathbf{BPC}$ is defined as $\mathbf{KPC} + Cur$, $\mathbf{EKPC}$ as $\mathbf{KPC}$ plus the rule $E$, $\mathbf{EBPC}$ as $\mathbf{BPC}$ plus the rule $E$, $\mathbf{KTPC}$ as $\mathbf{KPC}$ plus the rule $T$ and $\mathbf{IPC}$ is defined as $\mathbf{BPC} + T$.
\begin{rem}\label{t4-7}
The other way to define $\mathbf{BPC}$ is by using the rules for $\mathbf{KPC}$ with relaxing the condition on $\rightarrow I$ (See \cite{Ard2}). It is clear that the rule $cur$ is provable by this more strong version of $\rightarrow I$. For the converse, first we will show that using the rule $cur$, it is possible to prove that $C \vdash D \rightarrow C$ for all the formulas $C$ and $D$. It is enough to use the $cur$ rule on $C$ to show $C \vdash \top \rightarrow C$ and then note $D \vdash \top$, hence $\vdash D \to \top$. By $tr$ and formalized $tr$ we have $C \vdash D \rightarrow C$.\\
Then assume $\Gamma, A \vdash B$ hence $\bigwedge \Gamma, A \vdash B$ and by the original version of $\rightarrow I$ we have $\vdash \bigwedge \Gamma \wedge A \to B$. But by what we proved and formalized $\wedge$ we can prove $\bigwedge \Gamma \vdash A \to \bigwedge \Gamma \wedge A$, hence by $tr$, we have $\bigwedge \Gamma \vdash A \to B$ which is equivalent to $\Gamma \vdash A \to B$.
\end{rem}
\begin{rem}\label{t4-8}
Note that the language of these sub-intuitionistic logics is a subset of the language of $J$-logics. Therefore, we can apply the topological semantics for the $J$-formulas also for these sub-intuitionistic logics.
\end{rem}
The following example shows how the interpretation of the usual propositional formulas works.
\begin{exam}\label{t4-9}
Let $a$ and $b$ be two real numbers and $a \neq 0$. Now consider the model $X_{a, b}=(\mathbb{R}, \tau_E, J_{a,b})$ where $\tau$ is the Euclidean topology and $J_{a,b} : \tau \to \tau$ is the following function: $J_{a,b}(U)=aU+b=\{ax+b | x \in U \}$. It is clear that the image of any open $U$ is also open. $J$ is also monotone and join preserving. Now, we want to show that $X_{\frac{1}{2}, 0} \nvDash (\top \to p) \Rightarrow p$. Pick $V(p)=(0, 1)$. Then we have $V(\top \rightarrow p)= \bigcup\{U | \frac{1}{2} U \subseteq (0, 1)\}=(0, 2)$. Hence, $V(\top \rightarrow p)=(0, 2) \nsubseteq (0, 1)=V(p)$.\\
Now consider $X_{2, 0}$. We want to show that $X_{2, 0} \nvDash p \Rightarrow \top \rightarrow p$. Pick $V'(p)=(0, 2)$ then $V'(\top \rightarrow p)=(0, 1)$. Hence $V'(p) \nsubseteq V'(\top \to p)$.
\end{exam}

\begin{thm}\label{t4-10}(Embedding) Assume $\Gamma \cup \{A\} \subseteq \mathcal{L}$ where $\mathcal{L}$ is the language of propositional logic, then:
\begin{itemize}
\item[$(i)$]
$\Gamma \vdash_{\mathbf{mJ}} A$ iff $ \Gamma \vdash_{\mathbf{KPC}} A$.
\item[$(ii)$]
If $\Gamma \vdash_{\mathbf{sCoJ}} A$ iff $ \Gamma \vdash_{\mathbf{EKPC}} A$.
\item[$(iii)$]
If $\Gamma \vdash_{\mathbf{CoJ}} A$ iff $ \Gamma \vdash_{\mathbf{KTPC}} A$.
\item[$(iv)$]
If $\Gamma \vdash_{\mathbf{J}} A$ iff $ \Gamma \vdash_{\mathbf{BPC}} A$.
\item[$(v)$]
If $\Gamma \vdash_{\mathbf{sI}} A$ iff $ \Gamma \vdash_{\mathbf{EBPC}} A$.
\item[$(vi)$]
If $\Gamma \vdash_{\mathbf{I}} A$ iff $ \Gamma \vdash_{\mathbf{IPC}} A$.
\end{itemize}
\end{thm}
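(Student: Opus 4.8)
The plan is to establish each of the six equivalences by proving its two halves separately and uniformly, the common engine being the soundness and completeness theorems for $J$-logics (Theorems~\ref{t4-2} and~\ref{t4-4}) on one side and the known Kripke completeness of the sub-intuitionistic calculi of \cite{Vi}, \cite{Ard} and \cite{Do} on the other. The bridge between the two worlds is the fact that in a relational modal space as in Example~\ref{t2-3} (or its Alexandrov variant, Example~\ref{t2-4}) the weak implication $\rightarrow_J$ reproduces exactly Visser's Kripke clause for $\rightarrow$: a short computation shows that $x \in V(A) \rightarrow_J V(B)$ holds if and only if $v \in V(B)$ for every $R$-successor $v$ of $x$ with $v \in V(A)$, since the smallest open neighbourhood of $x$ is the optimal witness subset and its image under $J$ is precisely the set of $R$-successors of $x$.

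For the implications ``$\Gamma \vdash_{\mathrm{sub}} A \Rightarrow \Gamma \vdash_{J} A$'' I would argue semantically. Reading each sub-intuitionistic formula through $V(A \rightarrow B) = V(A) \rightarrow_J V(B)$, one checks that every rule of $\mathbf{KPC}$ — and each of the additional rules $E$, $T$, $cur$ — is validated in the appropriate class of modal spaces, the only tool being the adjunction of Theorem~\ref{t2-5}, namely $Jc \wedge a \le b \iff c \le a \rightarrow_J b$. For instance the restricted $\rightarrow I$ amounts to $J1 \wedge a \le a \le b$; $tr_f$, $(\wedge I)_f$ and $(\vee E)_f$ amount to elementary monotonicity and distributivity in the locale; the rule $E$ holds exactly when $J1 = 0$ forces $1 = 0$, i.e.\ in semi-cotemporal spaces; the rule $T$ (which is just modus ponens) holds exactly in cotemporal spaces; and $cur$ holds exactly in temporal spaces. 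Matching this against Definition~\ref{t2-8}, all rules of $\mathbf{KPC}$, $\mathbf{EKPC}$, $\mathbf{KTPC}$, $\mathbf{BPC}$, $\mathbf{EBPC}$, $\mathbf{IPC}$ hold respectively in $\mathbf{MS}$, $\mathbf{sCoTS}$, $\mathbf{CoTS}$, $\mathbf{TS}$, $\mathbf{sS}$, $\mathbf{S}$, and the completeness Theorem~\ref{t4-4} then yields provability in the corresponding $J$-logic.

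For the converse implications I would use soundness. If $\Gamma \vdash_{\mathbf{mJ}} A$ with $\Gamma \cup \{A\} \subseteq \mathcal{L}$, then by Theorem~\ref{t4-2} $\Gamma \Rightarrow A$ holds in every modal space, in particular in the power-set spaces $(P(W), J_R)$ of Example~\ref{t2-3}; by the bridge above this says precisely that $\Gamma \Rightarrow A$ is valid on every Kripke frame in Visser's sense, so the Kripke completeness of $\mathbf{KPC}$ gives $\Gamma \vdash_{\mathbf{KPC}} A$. For the stronger logics one restricts to the sub-class of modal spaces cut out by the extra rule and aligns it with the corresponding frame condition: seriality for $\mathbf{sCoTS}$, reflexivity for $\mathbf{CoTS}$, and, using the Alexandrov models of Example~\ref{t2-4} (whose opens are exactly the persistent valuations and which are automatically temporal since $J(U) \subseteq U$ for upward closed $U$), transitivity for $\mathbf{TS}$; finally, on $\mathbf{S} = \mathbf{TS} \cap \mathbf{CoTS}$ one has $J = \mathrm{id}$, so $\rightarrow_J$ is the ordinary Heyting implication and the classical topological completeness of $\mathbf{IPC}$ (\cite{Mc}) closes that case.

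The hard part will be bookkeeping rather than insight: carefully aligning, case by case, the semantic conditions of Definition~\ref{t2-8} with the precise frame classes over which $\mathbf{EKPC}$, $\mathbf{KTPC}$ and $\mathbf{EBPC}$ are known to be Kripke-complete, and, for the systems containing $cur$, making sure one presents the relevant Kripke models as modal spaces of class $\mathbf{TS}$ — which forces the passage from the power-set space of Example~\ref{t2-3} to the Alexandrov space of Example~\ref{t2-4}, so that persistence of atomic forcing is built into the topology. It is also worth noting explicitly that, although the operator $J$ appears in the premises of the $J$-logic rules $\rightarrow I$, $\rightarrow E$ and $F$, this does not make any new $J$-free sequent provable; this conservativity, however, comes for free from the semantic argument above.
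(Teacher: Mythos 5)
Your proposal is correct, and its right-to-left halves (from the $J$-logics back to the sub-intuitionistic calculi) follow essentially the paper's own route: soundness of the $J$-logics over the corresponding classes of modal spaces, translation of the Kripke models into the relational modal spaces of Examples \ref{t2-3} and \ref{t2-4} (with the Alexandrov variant exactly where $cur$ forces persistence of atomic forcing), the matching of $\rightarrow_J$ with the successor clause, and Kripke completeness of the propositional systems. Where you genuinely diverge is the other half: the paper proves that $\Gamma\vdash_{\mathbf{KPC}} A$ implies $\Gamma\vdash_{\mathbf{mJ}} A$ (and handles $cur$, $T$, $E$) purely proof-theoretically, by exhibiting explicit derivations in the $J$-logics of the formalized rules, of the restricted $\rightarrow I$, and of each additional rule from its corresponding $J$-rule, whereas you verify that every $\mathbf{KPC}$ rule and each additional rule is semantically valid on the appropriate class of modal spaces (via the adjunction of Theorem \ref{t2-5}) and then invoke the completeness Theorem \ref{t4-4} to recover provability in the $J$-logic. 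Your route is legitimate: the sequents lie in $\mathcal{L}\subseteq\mathcal{L}_J$, so Theorem \ref{t4-4} applies, and there is no circularity because you check the rule validities directly instead of quoting Theorem \ref{t4-12}, whose soundness half the paper actually derives from this embedding theorem. The trade-off is that the paper's syntactic derivations give an effective, proof-level translation (which is the point stressed in Remark \ref{t4-11}) and do not depend on the Lindenbaum-style machinery of Lemma \ref{t4-3}, while your argument is shorter and uniform across the six systems at the cost of a detour through completeness. One small imprecision to fix: the rule $E$ is valid on a space exactly when $\bigvee\{d \mid Jd=0\}=0$, i.e.\ when $Jd=0$ implies $d=0$ for \emph{all} $d$ (semi-cotemporality), not merely when $J1=0$ forces $1=0$; the direction you actually use, soundness of $E$ over $\mathbf{sCoTS}$, is nevertheless correct.
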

\begin{proof}
To prove the soundness part, note that all the propositional rules except $\to I$ are available in $\mathbf{mJ}$. Therefore, it remains to show that all the formalized rules and $\rightarrow I$ are also provable in $\mathbf{mJ}$. This is what we will do in the following proof trees. Note that by double line rules we mean that there is an easy omitted proof tree between the upper part and the lower part. The main rule among them is the following tree:\\

\begin{center}
\begin{tabular}{c}
        
        \AxiomC{$A\wedge B \vdash A \wedge B $}
        \LeftLabel{\tiny{$\wedge E$}}
        \UnaryInfC{$A \wedge B \vdash A$}
        \LeftLabel{\tiny{$mJ$}}
        \UnaryInfC{$J(A \wedge B) \vdash J(A)$}

        \AxiomC{$A\wedge B \vdash A \wedge B $}
        \LeftLabel{\tiny{$\wedge E$}}
        \UnaryInfC{$A \wedge B \vdash B$}
        \LeftLabel{\tiny{$mJ$}}
        \UnaryInfC{$J(A \wedge B) \vdash J(B)$}
        \LeftLabel{\tiny{$\wedge I$}}
        \BinaryInfC{$J(A \wedge B) \vdash J(A) \wedge J(B)$}
        \DisplayProof
        
\end{tabular}
\end{center}

For the formalized $\wedge I$, we have:

\begin{center}
\begin{tabular}{c}
      
        \AxiomC{$ $}
        \doubleLine
        \UnaryInfC{$J(A \rightarrow B), A \vdash B$}
        
	    \AxiomC{$ $}
        \doubleLine
        \UnaryInfC{$J(A \rightarrow C), A \vdash C$}
	    \LeftLabel{\tiny{$\wedge I$}}
        \BinaryInfC{$J(A \rightarrow B), J(A \rightarrow C), A \vdash B \wedge C$}
        \doubleLine
	    \UnaryInfC{$J((A \rightarrow B) \wedge (A \rightarrow C)), A \vdash B \wedge C$}
	    \LeftLabel{\tiny{$\rightarrow I$}}
        \UnaryInfC{$(A \rightarrow B) \wedge (A \rightarrow C) \vdash A \rightarrow (B \wedge C)$}
        \doubleLine
        \UnaryInfC{$(A \rightarrow B), (A \rightarrow C) \vdash A \rightarrow (B \wedge C)$}
        \DisplayProof
        
\end{tabular}
\end{center}

and for the formalized $\vee I$, we have:

\begin{center}
\begin{tabular}{c}
      \AxiomC{$ $}
        \doubleLine
        \UnaryInfC{$J(A \rightarrow C), A \vdash C$}
        
	    \AxiomC{$ $}
        \doubleLine
        \UnaryInfC{$J(B \rightarrow C), B \vdash C$}
	    \LeftLabel{\tiny{$\vee I$}}
        \BinaryInfC{$J(A \rightarrow C), J(B \rightarrow C), A \vdash B \vee C$}
        \doubleLine
	    \UnaryInfC{$J((A \rightarrow C) \wedge (B \rightarrow C)), A \vdash B \vee C$}
	    \LeftLabel{\tiny{$\rightarrow I$}}
        \UnaryInfC{$(A \rightarrow C) \wedge (B \rightarrow C) \vdash A \rightarrow B \vee C$}
        \doubleLine
        \UnaryInfC{$(A \rightarrow C), (B \rightarrow C) \vdash A \rightarrow B \vee C$}
        \DisplayProof
        
\end{tabular}
\end{center}

for the formalized $tr$, we have:

\begin{center}
\begin{tabular}{c}
      
        \AxiomC{$ $}
        \doubleLine
        \UnaryInfC{$J(A \rightarrow B), A \vdash B$}
        
	    \AxiomC{$ $}
        \doubleLine
        \UnaryInfC{$J(B \rightarrow C), B \vdash C$}
	    \LeftLabel{\tiny{$cut$}}
        \BinaryInfC{$J(A \rightarrow B), J(B \rightarrow C), A \vdash C$}
        \doubleLine
	    \UnaryInfC{$J((A \rightarrow B) \wedge (B \rightarrow C)), A \vdash C$}
	    \LeftLabel{\tiny{$\rightarrow I$}}
        \UnaryInfC{$(A \rightarrow B) \wedge (B \rightarrow C) \vdash A \rightarrow C$}
        \doubleLine
        \UnaryInfC{$(A \rightarrow B), (B \rightarrow C) \vdash A \rightarrow C$}
        \DisplayProof
\end{tabular}
\end{center} 

And finally for $\rightarrow I$ we have:
\begin{center}
\begin{tabular}{c}
      
        \AxiomC{$\vdash \top $}
        
        \AxiomC{$J(\top), A \vdash A $}
        \AxiomC{$ A \vdash B $}
        \LeftLabel{\tiny{$cut$}}
        \BinaryInfC{$J(\top) , A \vdash B$}
        	\LeftLabel{\tiny{$\rightarrow I$}}
        \UnaryInfC{$\top \vdash A \to B$}
        \LeftLabel{\tiny{$cut$}}
        \BinaryInfC{$ \vdash A \rightarrow B$}
        \DisplayProof
\end{tabular}
\end{center}
Now we have to show that the additional rules are provable by their corresponding rules. For $cur$, we will use its characterization based on $\rightarrow I$ as mentioned in the Remark \ref{t4-7}. 

\begin{center}
\begin{tabular}{c}
      
        \AxiomC{$ J(\bigwedge \Gamma) \vdash J(\bigwedge \Gamma) $}
        \LeftLabel{\tiny{$J$}}
        \UnaryInfC{$J(\bigwedge \Gamma) \vdash \bigwedge \Gamma $}
        \AxiomC{$\Gamma, A \vdash B $}
        \doubleLine
        \UnaryInfC{$\bigwedge \Gamma , A \vdash B$}
        	\LeftLabel{\tiny{$cut$}}
        \BinaryInfC{$J(\bigwedge \Gamma) , A \vdash B$}
        \LeftLabel{\tiny{$\rightarrow I$}}
	    \UnaryInfC{$\bigwedge \Gamma \vdash A \rightarrow B$}
	    \doubleLine
        \UnaryInfC{$ \Gamma \vdash A \rightarrow B$}
        \DisplayProof
\end{tabular}
\end{center}

For $T$ and $E$ we have:

\begin{center}
\begin{tabular}{c c}
      
        \AxiomC{$A \vdash A $}
        
        \AxiomC{$ A \rightarrow B \vdash A \rightarrow B $}
        \LeftLabel{\tiny{$CoJ$}}
        \UnaryInfC{$A \rightarrow B \vdash J(A \rightarrow B) $}
        	\LeftLabel{\tiny{$\rightarrow E$}}
        \BinaryInfC{$A, A \rightarrow B \vdash B$}
        \DisplayProof
        &\;
        
        \AxiomC{$ $}
        \doubleLine
        \UnaryInfC{$\top, J(\top \rightarrow \bot) \vdash \bot $}
        \doubleLine
        \UnaryInfC{$J(\top \rightarrow \bot) \vdash \bot $}
        	\LeftLabel{\tiny{$sCoJ$}}
        \UnaryInfC{$\top \rightarrow \bot \vdash \bot$}
        \DisplayProof
\end{tabular}
\end{center}
To prove the converse, notice that all the Kripke models of the propositional logics can be translated to the corresponding modal spaces. For this purpose, we have to divide the proof into two different cases: The first case is when the rule $cur$ is present in the propositional system and the second case is when it is not. For the second case, assume $(W, R, V)$ is the Kripke model, then define the modal space $(P(W), J_R)$ of this Kripke model as in the Example \ref{t2-3} and define $V'(p)=V(p)$. It is easy to see that for any propositional formula $B$, $w \vDash B$ iff $w \in V'(B)$. The proof is exactly the same as the proof of Theorem \ref{t3-7}. Therefore, since the sequent $\Gamma \vdash A$ is provable in the corresponding $J$-logic, we know $(P(W), J_R, V') \vDash \Gamma \Rightarrow A$ hence $V'(\bigwedge \Gamma) \leq V'(A)$ which implies $(W, R, V) \vDash \Gamma \Rightarrow A$. Finally, using the completeness of the Kripke Semantics for the propositional logic we can prove the provability of $\Gamma \vdash A$ in the propositional logic.\\

The proof of the first case is the same as the previous case with just a little change in the definition of the corresponding modal space. In this case we have to define the modal space as in the Example \ref{t2-4}. The reason is that $R$ is transitive and we want $J_R$ to have the property of being temporal. Now define $V'(p)=V(p)$. Note that by the definition of Kripke models for logics with the $cur$ rule, $V(p)$ is upward-closed which implies that it is open in our topological space. Hence $V'(p)$ is well-defined. The next important part is the claim that for any propositional formula $B$, $w \vDash B$ iff $w \in V'(B)$. The atomic case is trivial by definition. The cases for conjucntion and disjunction are also trivial. For the implication case, assume 
\[
w \in V'(A \rightarrow B)=\bigcup\{U| JU \cap V'(A) \subseteq V'(B)\}.
\]
Therefore, there exists $U$ such that $w \in U$ and $JU \cap V'(A) \subseteq V'(B)$. Hence if $(w, u) \in R$, then $u \in JU$ and thus we know that $u \in V'(A)$ implies $u \in V'(B)$. By IH, if $u \vDash A$ then $u \vDash B$. Hence $w \vDash A \rightarrow B$.
Conversely, if $w \vDash A \rightarrow B$ then for all $u$ such that $(w, u) \in R$ if $u \vDash A$ then $u \vDash B$. By using IH it implies that if $u \in V'(A)$ then $u \in V'(B)$. Therefore define $U=\{u| (w, u) \in R \vee u=w \}$. Since $R$ is transitive, $U$ is upward-closed and hence open. On the other hand, by the definition of $J$ we have $JU=\{u| (w, u) \in R\}$ and since $JU \cap V'(A) \subseteq V'(B)$ we will have $w \in U \subseteq V'(A \rightarrow B)$. Hence $w \in V'(A \rightarrow B)$.\\
The rest of the proof is similar to the second case.
\end{proof}
\begin{rem}\label{t4-11}
The sub-intuitionistic logics that we have defined in this section have an extreme importance in the philosophical sense. However, their limited language and specifically their lack of smooth rules for implication makes developing mathematics on top of them, practically impossible. The reason, roughly speaking, is the anti-symmetry in the definition of the implication. Although almost all of them have a strong introduction rule for implication, almost none of them have a reasonable elimination rule. In the categorical terms, the implication in these systems is not a part of an adjunction. The embedding theorem actually solves exactly this problem. It shows that it is possible to embed these logics to some well-behaved $J$-logics which keep the adjunctive symmetry of the rules of logic while it has some undefined weak modality $J$ to make some room for changes.
\end{rem}
By the embedding theorem, we have the soundness-completeness for the propositional logics with respect to the topological semantics.
\begin{thm}\label{t4-12}(Soundness-Completeness Theorem)
\begin{itemize}
\item[$(i)$]
$\Gamma \vdash_{\mathbf{KPC}} A$ iff $ \Gamma \vDash_{\mathbf{MS}} A$.
\item[$(ii)$]
If $\Gamma \vdash_{\mathbf{EKPC}} A$ iff $ \Gamma \vDash_{\mathbf{sCoTS}} A$.
\item[$(iii)$]
If $\Gamma \vdash_{\mathbf{KTPC}} A$ iff $ \Gamma \vDash_{\mathbf{CoTS}} A$.
\item[$(iv)$]
If $\Gamma \vdash_{\mathbf{BPC}} A$ iff $ \Gamma \vDash_{\mathbf{TS}} A$.
\item[$(v)$]
If $\Gamma \vdash_{\mathbf{EBPC}} A$ iff $ \Gamma \vDash_{\mathbf{sS}} A$.
\item[$(vi)$]
If $\Gamma \vdash_{\mathbf{IPC}} A$ iff $ \Gamma \vDash_{\mathbf{S}} A$.
\end{itemize}
\end{thm}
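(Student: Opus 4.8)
The plan is to obtain Theorem \ref{t4-12} by composing three results already established, without reopening any of the constructions behind them: the Embedding Theorem \ref{t4-10}, the Soundness Theorem \ref{t4-2} for $J$-logics, and the Completeness Theorem \ref{t4-4} for $J$-logics. Concretely, fix one of the six pairs, say $\mathbf{KPC}$ on the propositional side and $\mathbf{mJ}$ on the $J$-side, with the class $\mathbf{MS}$. For $\Gamma \cup \{A\} \subseteq \mathcal{L}$, Theorem \ref{t4-10}(i) gives $\Gamma \vdash_{\mathbf{KPC}} A$ iff $\Gamma \vdash_{\mathbf{mJ}} A$, while Theorems \ref{t4-2}(i) and \ref{t4-4}(i) together give $\Gamma \vdash_{\mathbf{mJ}} A$ iff $\mathbf{MS} \vDash \Gamma \Rightarrow A$. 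Chaining these two biconditionals yields $\Gamma \vdash_{\mathbf{KPC}} A$ iff $\Gamma \vDash_{\mathbf{MS}} A$, which is part (i).

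The only point deserving a remark is that the semantic side must be read consistently across the two input theorems. In Theorem \ref{t4-10} the formulas range over the propositional language $\mathcal{L}$, whereas Theorems \ref{t4-2} and \ref{t4-4} are stated for the richer language $\mathcal{L}_J$. But, as noted in Remark \ref{t4-8}, $\mathcal{L}$ is a sublanguage of $\mathcal{L}_J$, and the interpretation clauses of Definition \ref{t4-1} for $\top$, $\bot$, $\wedge$, $\vee$ and $\rightarrow$ never refer to $J$; hence the value $V(C)$ of a propositional formula $C$ in a modal space $(X,J,V)$ coincides with its value when $C$ is viewed as an $\mathcal{L}_J$-formula. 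Therefore $\mathbf{MS} \vDash \Gamma \Rightarrow A$ is unambiguous for $\Gamma \cup \{A\} \subseteq \mathcal{L}$, and invoking Theorems \ref{t4-2}(i) and \ref{t4-4}(i) on such $\Gamma$ and $A$ is legitimate.

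The remaining five items follow the identical pattern, using the aligned triples: $\mathbf{EKPC}$ with $\mathbf{sCoJ}$ and $\mathbf{sCoTS}$; $\mathbf{KTPC}$ with $\mathbf{CoJ}$ and $\mathbf{CoTS}$; $\mathbf{BPC}$ with $\mathbf{J}$ and $\mathbf{TS}$; $\mathbf{EBPC}$ with $\mathbf{sI}$ and $\mathbf{sS}$; and $\mathbf{IPC}$ with $\mathbf{I}$ and $\mathbf{S}$. In each case one applies the corresponding item of Theorem \ref{t4-10} for the syntactic equivalence and the corresponding items of Theorems \ref{t4-2} and \ref{t4-4} for the semantic equivalence, then composes. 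Since all three ingredient theorems are already proved, there is no substantive obstacle; the one thing to watch is the bookkeeping, namely checking that the class of modal spaces attached to each propositional logic is exactly the class attached to its $J$-logic translate — which is precisely how the classes were paired up in the statements of Theorems \ref{t4-2}, \ref{t4-4} and \ref{t4-10}, so the composition goes through row by row.
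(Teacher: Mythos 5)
Your proposal is correct and matches the paper's own argument: the paper derives Theorem \ref{t4-12} precisely by combining the Embedding Theorem \ref{t4-10} with the soundness and completeness theorems for the $J$-logics (Theorems \ref{t4-2} and \ref{t4-4}), exactly as you do. Your extra remark on reading propositional formulas in the language $\mathcal{L}_J$ via Remark \ref{t4-8} is a sensible piece of bookkeeping that the paper leaves implicit.
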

With the same line of reasoning as in the Theorem \ref{t3-9} and \ref{t4-6}, it is clear that we have a stronger version of completeness just by using the topological spaces with infinitely many connected components. However, in the presence of the rule $cur$, the situation becomes more interesting. To explain how, we need the following topological lemma. 
\begin{lem}\label{t4-13}
Let $X$ be an infinite Hausdorff space. Then every finite tree is a sutjective continuous image of $X$. 
\end{lem}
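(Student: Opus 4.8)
The plan is to reduce the statement to a finite combinatorial core and then handle the topology in two stages: first produce a continuous surjection onto a countably infinite discrete space, then compose with a continuous surjection from that discrete space onto the given finite tree. The second stage is the easy, purely combinatorial part, so I would dispose of it first. A finite tree $T$, viewed as a topological space via the Alexandrov topology on its underlying poset (opens = upward-closed sets), is the continuous image of a countably infinite discrete space $D$: just pick any surjection of sets $g : D \to T$ that hits every node infinitely often — every function out of a discrete space is continuous, so $g$ is automatically a continuous surjection. (In fact any surjection of sets works for continuity; hitting each node infinitely often is a convenience for the next step, and can be arranged since $D$ is infinite and $T$ is finite.) So the whole weight of the lemma is the claim: \emph{every infinite Hausdorff space $X$ admits a continuous surjection onto a countably infinite discrete space.}

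First I would extract a countably infinite \emph{discrete} subspace, or rather a countable family of pairwise disjoint nonempty open sets, from $X$. Since $X$ is infinite and Hausdorff, I can build by recursion a sequence of distinct points $x_0, x_1, x_2, \dots$ together with pairwise disjoint open neighborhoods $U_0, U_1, \dots$ with $x_n \in U_n$: having chosen $x_0,\dots,x_{n-1}$ and disjoint opens $U_0,\dots,U_{n-1}$, pick any point $x_n$ distinct from all previous ones (possible as $X$ is infinite), use Hausdorffness to separate $x_n$ from each $x_i$ ($i<n$) by disjoint opens, and intersect to get $U_n$ disjoint from a chosen neighborhood of each $x_i$; shrinking the earlier neighborhoods symmetrically (or, more cleanly, doing the standard simultaneous construction) yields a pairwise disjoint family. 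Actually the cleanest route: it is a standard fact that an infinite Hausdorff space contains an infinite discrete subspace, equivalently an infinite family of pairwise disjoint nonempty opens $\{U_n\}_{n\in\mathbb{N}}$; I would cite or quickly prove this.

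Now define $h : X \to \mathbb{N}$ (with $\mathbb{N}$ discrete) by $h(y) = n$ if $y \in U_n$, and $h(y) = 0$ for $y \notin \bigcup_n U_n$. This is continuous because the preimage of $\{n\}$ for $n \geq 1$ is $U_n$ (open) and the preimage of $\{0\}$ is $U_0 \cup (X \setminus \bigcup_n U_n)$, which is open since $\bigcup_{n\geq 1} U_n$ is open and we are adding the open set $U_0$ — wait, $X \setminus \bigcup_{n\geq 1}U_n$ need not be open. To fix this I would instead not demand that $h$ be defined everywhere via the $U_n$ alone; rather, redefine so that $h^{-1}(0)$ is forced open: replace $U_0$ by $U_0' = U_0 \cup (X\setminus\bigcup_{n\geq 1}\overline{U_n})$ is still not obviously open either. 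The genuinely safe fix: choose the $U_n$ for $n\geq 1$ and simply let $h$ map everything outside $\bigcup_{n\geq 1}U_n$ into the single extra point, and take the \emph{quotient} target to be $\{\ast\} \cup \{1,2,\dots\}$ — but a quotient of this need not be discrete. The clean resolution is: it suffices to surject onto a countably infinite discrete space, and $h$ restricted to $\bigsqcup_{n\geq 1} U_n$ already surjects onto $\{1,2,\dots\}$ continuously on that open subspace; to get a map on all of $X$, send the complement into $U_1$'s label, i.e. set $h(y)=1$ for $y\notin\bigcup_{n\geq 2}U_n$ and $h(y)=n$ for $y\in U_n$, $n\geq 2$; then $h^{-1}(n)=U_n$ is open for $n\geq 2$ and $h^{-1}(1)=X\setminus\bigcup_{n\geq 2}U_n$ is closed — still not open.

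The \textbf{main obstacle}, then, is exactly this: continuity of a surjection \emph{onto a discrete space} forces every fiber to be clopen, and an arbitrary infinite Hausdorff space need not decompose into infinitely many clopen pieces (e.g. $[0,1]$ is connected!). So the lemma as literally stated — with $X$ an \emph{arbitrary} infinite Hausdorff space and the tree carrying the discrete-like Alexandrov topology that makes leaves open points — \emph{cannot} be true via a genuine continuous surjection onto a discrete space. Hence the intended reading must be different: I expect ``finite tree'' here to carry the \emph{order topology / Alexandrov topology of its poset} in which the \emph{root} is the generic point (every nonempty open contains the root) — i.e. opens are the \emph{downward}-closed sets toward the root, so the space is far from discrete and is in fact \emph{irreducible with a generic point}, reflecting the earlier use (Example~\ref{t2-4}) of upward-closed opens on relational frames. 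With that topology, a continuous surjection $X \to T$ amounts to a finite chain/tree of \emph{nested opens} $X = W_{\text{root}} \supseteq \dots$ subordinate to the tree order, hitting each node. The plan is therefore: (1) fix the correct topology on $T$ (poset topology with root generic); (2) by the disjoint-opens construction above, carve out inside $X$ enough open sets to realize, by unions and the ambient open set $X$, a system of opens indexed by $T$ respecting the tree order and jointly covering $X$, with each node's open strictly realized by at least one point; (3) define $f: X\to T$ by sending $y$ to the unique minimal (closest-to-leaf) node $t$ such that $y$ lies in the open assigned to $t$, and check continuity (preimage of the basic open ``below $t$'' is the assigned open) and surjectivity. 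The only real work is step (2)'s bookkeeping — ensuring the finitely many assigned opens nest correctly along the branches and that their ``difference sets'' are nonempty — and this is where I would spend the effort; Hausdorffness plus infinitude gives more than enough room (countably many disjoint opens) to accommodate a fixed finite tree.
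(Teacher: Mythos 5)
Your final plan (carve a system of opens out of $X$ indexed by the nodes of the tree, send the leftover points to the root) is in the same spirit as the intended proof, but as written there are two genuine problems. First, the topology on the tree is mis-identified. In this paper a finite tree is topologized as in Example \ref{t2-4}: the \emph{upward}-closed sets are open, so each leaf is an open point and the root, far from being generic, is a closed point whose only open neighbourhood is the whole tree. Your stated reading (opens are the rootward-closed sets, so every nonempty open contains the root) is the order-dual of this, and it is not the topology under which the lemma gets used in Theorem \ref{t4-14}, where the target spaces are exactly the modal spaces built from finite Kripke trees as in Example \ref{t2-4}. Note also that your own picture in step (2), ``$X = W_{\mathrm{root}} \supseteq \dots$'', silently corresponds to the upward-closed reading, so your stated topology and your construction contradict each other; it is the upward-closed reading that makes the construction work, since mapping all unused points of $X$ to the root imposes no continuity constraint (the root's only neighbourhood pulls back to all of $X$).

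Second, and more seriously, steps (2)--(3) --- which you yourself flag as ``the only real work'' --- are precisely what is missing, and the fact you propose to lean on does not suffice. Knowing that $X$ contains countably many pairwise disjoint nonempty open sets gives no control over the size of those sets: the members of such a family may all be singletons (think of a discrete space, or any space with many isolated points), and then you cannot continue the construction inside them to accommodate the grandchildren of the root. What is needed is a quantitative supply lemma, e.g.: for all $N$ and $K$ there is an $M$ such that every Hausdorff space with at least $M$ points contains $K$ pairwise disjoint open subsets each having at least $N$ points (proved by induction on $N$), followed by an induction on the size of the tree: choose disjoint sufficiently large opens $U_1,\dots,U_n$ for the subtrees above the root's children, map each $U_i$ onto the corresponding subtree by the induction hypothesis, and send everything else to the root. (Alternatively, prove that every infinite Hausdorff space has an infinite family of pairwise disjoint \emph{infinite} open sets, treating separately the cases of infinitely many versus finitely many isolated points, and recurse.) Some lemma of this kind must be stated and proved for the nesting bookkeeping to go through; without it the proposal is a plan rather than a proof. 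The first half of your note, the attempted reduction to a continuous surjection onto a discrete space, is, as you observed yourself, a dead end and should simply be removed.
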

\begin{proof}
Let us first prove the following claim:\\

\textbf{Claim 1.} For any natural numbers $N$ and $K$, there exists a natural number $M$ such that for any Hausdorff space $X$ with cardinality greater than or equal to $M$, there are $K$ many open subspaces of $X$ with at least $N$ elements.\\

We prove the claim by induction on $N$. For $N=1$, pick $M=K$ and prove by induction. For $K=1$, it is enough to pick the whole space as the open subset. To prove the claim for $K+1$, since we have at least $K+1$ elements, we have also at least $K$ elements, and by IH, it is possible to find at least $K$ non-empty open subsets  $\{U_i\}_{i=0}^K$. Pick $\{x_i\}_{i=0}^K$ as elements such that $x_i \in U_i$. Therefore, there should be some $x \notin \{x_i\}_{i=0}^K$. Now, use the condition that the space is Hausdorff to find $U_{K+1}$ such that $x \in U_{K+1}$ and $U_{K+1}$ is disjoint with all $U_i$'s.\\

Now, if we have the claim for $N$, we want to prove it for $N+1$. By IH we know that there exists $M'$ that works for $N$ and $K'=2K$. We claim that $M=M'$ works for $N+1$ and $K$. If $X$ has at least $M'$ elements, then there are at lest $2K$ mutually disjoint opens such that each of them has at least $N$ elements. If we arrange these $2K$, to $K$ pairs that compute their unions, then we have $K$ opens, each of them contains at least $2N$ elements, which is greater than or equal to $N$.\\

Now we want to prove the following claim:\\

\textbf{Claim 2.} For any natural number $N$, there exists a natural number $M$ such that for any Hausdorff space with at least $M$ elements and any finite tree with at most $N$ elements, there exists a continuous surjection from the space to the tree.\\

We will prove the claim by induction on $N$. For $N=1$ pick $M=1$ and use the constant function. For $N+1$, by IH, we know that for $N$ there exists an $M'$. Pick $M$ as a number in claim 1, for $N$ and $K=M'$. Therefore, the space $X$ has at least $N$ opens each of them contains at least $M'$ elements. Call them $\{U_i\}_{i=1}^N$. Since the tree has $N+1$ element, there are at most $N$ branches for the root such that each of them has at most $N$ nodes. Call these branches $\{T_j\}_{j=0}^n$ for some $n \leq N$. By IH, we can find a surjective continuous function $f_i: U_i \to T_i$ for any $1 \leq i \leq n$. Now define $f: X \to T$ as the extension of the union of $f_i$'s such that it sends any $x \in \bigcup_{i=1}^n U_i$ to the root $r$. The function is clearly surjective. For continuity, note that any open subset of the tree is a upward-closed subset which means that it is equal to $T$ or it is a subset of one of $T_i$'s. For the first case, $f_{-1}(T)=X$ which is open. For the second case, it is implied from the continuity of $f_i$ and the condition that $U_i$ is open.\\

Now, by the claim 2 it is easy to prove the lemma. Let $X$ be an infinite Hausdorff space and $T$ a finite tree. Then for the cardinality of $T$, say $N$, there exists a number $M$ such that for any space with at least $M$ elements, specially $X$, there exists the continuous surjection to $T$.
\end{proof}
\begin{thm}\label{t4-14}(Completeness Theorem, Strong version) Let $X$ be an infinite Hausdorff space. Then
\begin{itemize}
\item[$(i)$]
If $ X \vDash_{\mathbf{TS}} \Gamma \Rightarrow A$ then $\Gamma \vdash_{\mathbf{BPC}} A$
\item[$(ii)$]
If $ X \vDash_{\mathbf{sS}} \Gamma \Rightarrow A$ then $\Gamma \vdash_{\mathbf{EBPC}} A$
\item[$(iii)$]
If $ X \vDash_{\mathbf{S}} \Gamma \Rightarrow A$ then $\Gamma \vdash_{\mathbf{IPC}} A$
\end{itemize}
\end{thm}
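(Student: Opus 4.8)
The plan is to prove only the completeness direction — the soundness direction $\Gamma\vdash A\Rightarrow X\vDash_{\mathcal C}\Gamma\Rightarrow A$ is immediate from the soundness half of Theorem \ref{t4-12}, since it holds in every $\mathcal C$-structure that can be placed on $X$ — and to obtain it by contraposition, transporting a finite Kripke countermodel onto $X$ along a continuous surjection furnished by Lemma \ref{t4-13}. Fix one of the three systems, say $\mathbf{BPC}$, and suppose $\Gamma\nvdash_{\mathbf{BPC}}A$. Since $\Gamma\cup\{A\}$ lies in the propositional language, the embedding Theorem \ref{t4-10} reduces this to underivability in the corresponding $J$-logic, and then Kripke completeness together with the finite tree model property of these sub-intuitionistic logics (\cite{Vi},\cite{Ard},\cite{Do}) yields a \emph{finite transitive tree} $(W,R)$ and a persistent valuation $V$ with $(W,R,V)\nvDash\Gamma\Rightarrow A$; for $\mathbf{EBPC}$ one gets in addition the frame condition matching $\mathbf{sCoTS}$, and for $\mathbf{IPC}$ a reflexive transitive tree, i.e. a finite rooted poset.

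Next I would repackage this finite Kripke model as a finite modal space exactly as in the $cur$-case of the proof of Theorem \ref{t4-10}: topologize $W$ by declaring its upward-closed subsets to be the opens, put $J=J_R$ as in Example \ref{t2-4} (so $J$ is temporal by transitivity of $R$, and the extra semi-cotemporal, resp. cotemporal, identity holds whenever the matching frame condition does), and set $V'(p)=V(p)$, which is legitimate precisely because persistent sets are open. The translation lemma established there — $w\vDash B \iff w\in V'(B)$ for every propositional $B$ — then gives $(W,\tau,J,V')\nvDash\Gamma\Rightarrow A$. Since $(W,\tau,J)$ is finite, hence Alexandrov, and lies in the relevant class $\mathcal C\in\{\mathbf{TS},\mathbf{sS},\mathbf{S}\}$, this shows $W\nvDash_{\mathcal C}\Gamma\Rightarrow A$.

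The geometric step comes last. The finite rooted tree $W$, carrying the topology whose opens are its upsets, is a tree in the sense of Lemma \ref{t4-13}; since $X$ is an infinite Hausdorff space, that lemma produces a continuous surjection $f\colon X\to W$, and $W$ is Alexandrov, so Theorem \ref{t4-5} applies to $f$. Its proof parallels that of Theorem \ref{t3-8}: pulling $V'$ back along $f$ one obtains a valuation $V''$ on $X$ with $V''(C)=f^{-1}(V'(C))$ for all $C$, and since $f$ is surjective $f^{-1}$ is injective and order-reflecting, hence commutes with the inequality defining validity of a sequent; thus Theorem \ref{t4-5} covers $\Gamma\Rightarrow A$ and not just single formulas. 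From $W\nvDash_{\mathcal C}\Gamma\Rightarrow A$ we conclude $X\nvDash_{\mathcal C}\Gamma\Rightarrow A$, contradicting the hypothesis, so $\Gamma\vdash_{\mathbf{BPC}}A$; the arguments for $\mathbf{EBPC}$ and $\mathbf{IPC}$ are identical.

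The main obstacle is not the transport along $f$ — Lemma \ref{t4-13} and Theorem \ref{t4-5} handle that cleanly — but the bookkeeping at the Kripke level: one must secure the finite tree model property in exactly the form needed for all three systems, and check that the frame conditions for $\mathbf{EBPC}$ and $\mathbf{IPC}$ transfer, under the construction of Example \ref{t2-4}, into membership in $\mathbf{sS}$ and $\mathbf{S}$; note that the \emph{topology} on the tree, hence the applicability of Lemma \ref{t4-13}, depends only on the poset reduct of $R$, so reflexivity/irreflexivity issues affect only which $J_R$ one carries along. A smaller point to verify is that the branch-by-branch map built inside Lemma \ref{t4-13} is continuous for the upset topology on the tree — it is, because the nonempty opens of a finite rooted tree are exactly the whole space together with the upsets contained in a single branch, which is precisely what that construction manipulates. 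Finally, it is worth remarking why "infinite Hausdorff" suffices here while the boolean Corollary \ref{t3-9} required infinitely many connected components: the modal spaces needed now sit on rooted Alexandrov trees, which are \emph{connected}, so no disconnectedness of $X$ is called for, and Hausdorffness is exactly the hypothesis Lemma \ref{t4-13} consumes to split off the branches.
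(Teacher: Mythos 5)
Your proof is correct and follows essentially the same route as the paper's: completeness of the logics with respect to modal spaces built from finite rooted Kripke trees (reflexive or serial where appropriate), the observation that these spaces are Alexandrov, Lemma \ref{t4-13} to obtain a continuous surjection from the infinite Hausdorff space, and Theorem \ref{t4-5} to transfer validity back. You merely spell out details the paper leaves implicit (the reduction via Theorem \ref{t4-10}, the extension of Theorem \ref{t4-5} from formulas to sequents via injectivity of $f^{-1}$, and the connectedness remark), which is fine.
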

\begin{proof}
The claim is a trivial combination of the following three facts: Firstly, the modal spaces constructed from finite Kripke rooted trees (reflexive or serial in the appropriate cases) are complete for the logics. Secondly, these modal spaces are Alexandrov and finally they are surjective continuous image of any infinite Hausdorff space by the Lemma \ref{t4-13}.   
\end{proof}
\section{Categorical Semantics}
In this section we will use different types of categories defined in the first section as natural models for the sub-intuitionistic logics defined in the previous section. The following soundness-completeness result actually shows that the seemingly strange behavior of implication in these logics is actually quite natural.
\begin{dfn}\label{t5-1}
Let $\mathcal{C}$ be a strong category which has product, coproduct, the initial object and the terminal object. We say $\mathcal{C}$ has an internal propositional structure if it internalizes all of its propositional structures, i.e. product, coproduct, the terminal and the initial objects. For instance, for product we should have the following condition: For every objects $A$, $B$ and $C$ there exists $pair : [A, B] \times [A, C] \to [A, B \times C]$ such that the following diagrams commute:

\begin{center}
\begin{tabular}{c c}

\begin{tikzpicture}
\node (C) {$[C, A] \times [C, B]$};
  \node (P) [below of=C] {$[C, A \times B]$};
  \node (Ai) [right of=P, node distance=4cm] {$[C, A]$};
  \draw[->] (C) to node [swap] {$pair$} (P);
  \draw[->] (P) to node [swap] {$[id, p_0]$} (Ai);
  \draw[->] (C) to node {$p_0$} (Ai);
\end{tikzpicture}
&
\begin{tikzpicture}
\node (C) {$[C, A] \times [C, B]$};
  \node (P) [below of=C] {$[C, A \times B]$};
  \node (Ai) [right of=P, node distance=4cm] {$[C, B]$};
  \draw[->] (C) to node [swap] {$pair$} (P);
  \draw[->] (P) to node [swap] {$[id, p_1]$} (Ai);
  \draw[->] (C) to node {$p_1$} (Ai);
\end{tikzpicture}
\end{tabular}
\end{center}
And we have to have the same for all the other parts of the propositional structure.
\end{dfn}
\begin{rem}\label{t5-2}
Note that in the above definition, the natural internal version of the uniqueness condition for the pairs is the condition that the morphism $pair$ should be a mono. However, this condition is redundant in our definition because, using the external uniqueness, it is very easy to prove that the $pair$ is already a mono.
\end{rem}
\begin{dfn}\label{t5-3}
Let $\mathcal{C}$ be a category closed under finite products, finite coproducts and has terminal and initial objects. Also Let $h : \mathcal{C}^{op} \times \mathcal{C} \to \mathcal{C}$ be a functor and $V$ be a function which assigns to any atomic formula in the language $\mathcal{L}$, an object in the category $\mathcal{C}$. Extend $V : \mathcal{L} \to Ob(\mathcal{C})$ as the following:
\begin{itemize}
\item[$(i)$]
$V(\top)=1$ and $V(\bot)=0$.
\item[$(i)$]
$V(A \wedge B)=V(A) \times V(B)$.
\item[$(ii)$]
$V(A \vee B)=V(A) + V(B)$.
\item[$(iii)$]
$V(A \rightarrow B)=h(V(A), V(B))$.
\end{itemize}
We say $(\mathcal{C}, V) \vDash \Gamma \Rightarrow A$, when there exists a morphism $f : \prod_{B \in \Gamma} V(B) \to V(A)$. If for all $V$, $(\mathcal{C}, V) \vDash \Gamma \Rightarrow A$ holds, we say $\mathcal{C} \vDash \Gamma \Rightarrow A$.
\end{dfn}
We have the following soundness and completeness theorems.
\begin{thm}\label{t5-4}(Soundness Theorem)
\begin{itemize}
\item[$(i)$]
If $\Gamma \vdash_{\mathbf{KPC}} A$ then for any category $\mathcal{C}$ with internal propositional structure, $\mathcal{C} \vDash \Gamma \Rightarrow A$.
\item[$(ii)$]
If $\Gamma \vdash_{\mathbf{BPC}} A$ then for any Curry category $\mathcal{C}$ with internal propositional structure, $\mathcal{C} \vDash \Gamma \Rightarrow A$.
\item[$(ii)$]
If $\Gamma \vdash_{\mathbf{IPC}} A$ then for any closed category $\mathcal{C}$ with internal propositional structure, $\mathcal{C} \vDash \Gamma \Rightarrow A$.
\end{itemize}
\end{thm}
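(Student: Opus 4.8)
The plan is to argue by induction on the structure of the natural–deduction derivation, interpreting a derivation with conclusion $A$ whose undischarged assumptions are contained in $\Gamma$ as a morphism $\prod_{B\in\Gamma}V(B)\to V(A)$; this morphism is then the witness demanded by Definition \ref{t5-3}. Since $\mathbf{BPC}=\mathbf{KPC}+cur$ and $\mathbf{IPC}=\mathbf{BPC}+T$, since every closed category is a Curry category and every Curry category a strong category, and since all three clauses carry the standing assumption of an internal propositional structure, it suffices to: (a) interpret every $\mathbf{KPC}$–rule in a strong category with internal propositional structure; (b) interpret $cur$ using the map $i$ of a Curry category; (c) interpret $T$ using that $i$ is invertible in a closed category. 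The base case, a single undischarged assumption $A$, is $id_{V(A)}$ (precomposed with a projection if weakening is present), and the implicit substitution of a derivation into an assumption is interpreted by composition.

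For most of the $\mathbf{KPC}$–rules the interpretation is forced and essentially mechanical. The rules $\top$, $\bot$, $\wedge I$, $\wedge E$ and $\vee I$ use the terminal object, the initial object, the product pairing, the product projections and the coproduct injections, with side–contexts combined by taking products of the morphisms supplied by the induction hypothesis. The formalized transitivity rule $tr_f$ is interpreted by the internal composition $[V(A),V(B)]\times[V(B),V(C)]\to[V(A),V(C)]$ of Definition \ref{t1-1}, precomposed with a product of the two hypothesis morphisms; and $(\wedge I)_f$, $(\vee E)_f$ are interpreted by the internalization maps $[V(A),V(B)]\times[V(A),V(C)]\to[V(A),V(B)\times V(C)]$ and $[V(A),V(C)]\times[V(B),V(C)]\to[V(A)+V(B),V(C)]$ coming from the internal propositional structure (Definition \ref{t5-1}), again precomposed with products of hypothesis morphisms; no distributivity is needed in these three cases, since one is gluing pre-formed internal homs rather than decomposing a product over a coproduct. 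Finally $\rightarrow I$, whose side condition forces its premise to have $A$ as its only open assumption and hence its conclusion $A\rightarrow B$ to have none, needs a morphism $1\to[V(A),V(B)]$ out of a morphism $g\colon V(A)\to V(B)$; this is exactly $\gamma(g)=[1,g]\circ j_{V(A)}$ of Definition \ref{t1-1}, and only the \emph{existence} of $\gamma$ — not that it is a bijection — is used, which is why bare internal propositional structure suffices for $\mathbf{KPC}$.

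For $cur$ ($A\vdash\top\to A$) I would postcompose the hypothesis morphism $P\to V(A)$ with $i_{V(A)}\colon V(A)\to[1,V(A)]=V(\top\to A)$, exactly the datum a Curry category adds, which is what matches $\mathbf{BPC}$ with Curry categories. For $T$ ($A\wedge(A\to B)\vdash B$) I would first assemble an evaluation morphism $V(A)\times[V(A),V(B)]\to V(B)$ from the closed structure, namely $i_{V(A)}\times id$ followed by the internal composition $[1,V(A)]\times[V(A),V(B)]\to[1,V(B)]$ followed by $i_{V(B)}^{-1}$, using that $i$ is an isomorphism, and then precompose with the hypothesis morphism into $V(A)\times[V(A),V(B)]$; the use of $i^{-1}$ is precisely the categorical reason $T$ separates $\mathbf{IPC}$ from $\mathbf{BPC}$.

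The one step I expect to be genuinely delicate — and which I would treat as the crux — is the \emph{un}-formalized disjunction elimination $\vee E$. Interpreting it means gluing the morphisms $u_{0}\colon P_{0}\times V(A)\to V(C)$ and $u_{1}\colon P_{1}\times V(B)\to V(C)$ from the two minor premises along $w\colon P\to V(A)+V(B)$ from the major premise, and the natural route — rewrite $P_{0}\times P_{1}\times\bigl(V(A)+V(B)\bigr)$ as $\bigl(P_{0}\times P_{1}\times V(A)\bigr)+\bigl(P_{0}\times P_{1}\times V(B)\bigr)$, apply $u_{0},u_{1}$ on the summands by the coproduct universal property, and precompose with $w$ — requires the canonical distributivity comparison to be invertible. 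This does not follow from merely having finite products, finite coproducts and a weak exponential with the internalization maps, so the honest options are either to add ``distributive'' to the standing hypothesis of Definition \ref{t5-3} (harmless, since every intended model is distributive: the frame–induced modal spaces of Section 3 are locales, and the presheaf and closed examples of Section 2 are distributive as well), or to replace the primitive $\vee E$ in the object logics by its formalized companion $(\vee E)_{f}$, whose interpretation goes through the $copair$ map with no distributivity needed. Apart from this point all remaining verifications are routine applications of the universal properties and of the structure maps of Definitions \ref{t1-1} and \ref{t5-1}; since the semantics only demands the existence of a morphism, no coherence condition is invoked in this direction.
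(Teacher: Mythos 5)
Your proposal is correct and follows essentially the same route as the paper's own proof: induction on the derivation, propositional rules via the (co)product and (co)terminal structure, the formalized rules via the $pair$/$copair$ maps and the internal composition $L$, $\rightarrow I$ via the mere existence of $\gamma(f)=[1,f]\circ j$, $cur$ via $i_X$, and $T$ via $i_X^{-1}$ composed with internal composition. Your observation that the unformalized $\vee E$ with side assumptions needs the distributivity isomorphism $(P\times V(A))+(P\times V(B))\simeq P\times(V(A)+V(B))$, which the stated hypotheses do not supply, is a legitimate caveat that the paper's one-line treatment of the propositional rules silently passes over, and either of your remedies (assuming distributivity, which all intended models satisfy, or working with $(\vee E)_f$) repairs it.
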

\begin{proof}
The proof is by induction on the length of the derivation of $\Gamma \vdash A$. The propositional rules are just easy consequences of the fact that $\times$ and $+$ are product and coproduct. The same is true for $\bot$ and $\top$ as the initial and the terminal objects. The important part is the part of the formalized rules. The formalized rules $(\wedge I)_f$ and $(\vee E)_f$ are satisfied by the internal structure of the category. The $tr_f$ rule is implied by the internal structure of hom. And finally the $\rightarrow I$ rule is satisfied by the natural transformation $\gamma:  \mathcal{C}(A, B) \to \mathcal{C}(1, [A, B])$ which sends $f$ to $[1, f](j_A)$ and internalizes all morphisms from $A$ to $B$. For the additional rules, note that for the $cur$ rule, it suffices to have a natural transformation $i_X: X \to [1, X]$ and for the rule $T$, the converse of $i_X$ works.
\end{proof}

\begin{thm}\label{t5-5}(Completeness Theorem)
\begin{itemize}
\item[$(i)$]
If for any weakly closed category $\mathcal{C}$ with internal propositional structure, $\mathcal{C} \vDash \Gamma \Rightarrow A$, then $\Gamma \vdash_{\mathbf{KPC}} A$.
\item[$(ii)$]
If for any Curry weakly closed category $\mathcal{C}$ with internal propositional structure, $\mathcal{C} \vDash \Gamma \Rightarrow A$, then $\Gamma \vdash_{\mathbf{BPC}} A$.
\item[$(ii)$]
If for any closed category $\mathcal{C}$ with internal propositional structure, $\mathcal{C} \vDash \Gamma \Rightarrow A$, then $\Gamma \vdash_{\mathbf{IPC}} A$.
\end{itemize}
\end{thm}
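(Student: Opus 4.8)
We argue by contraposition, and for each of the three logics we produce a single model of the prescribed kind refuting the given sequent. The subtle point is that the natural ``syntactic'' models of these logics are only \emph{strong} and not weakly closed — there is no elimination rule for $\rightarrow$, so one direction of the deduction theorem is unavailable — and the proof has to repair precisely this. For part $(i)$, suppose $\Gamma\nvdash_{\mathbf{KPC}}A$. By the soundness--completeness of $\mathbf{KPC}$ for $\mathbf{MS}$ (Theorem~\ref{t4-12}$(i)$) there are a modal space $(X,J)$ and a valuation $V$ with $\bigwedge_{\gamma\in\Gamma}V(\gamma)\not\le V(A)$. Adjoin a fresh top: put $X^{+}=X\cup\{\ast\}$ with $\ast$ strictly above every element of $X$, and set $J^{+}|_{X}=J$, $J^{+}(\ast)=\ast$. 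One checks routinely that $X^{+}$ is again a locale, that $J^{+}$ is monotone and preserves all joins (the only case needing a glance, a join in which $\ast$ occurs, is immediate from $J^{+}(\ast)=\ast$), and that $J^{+}(1_{X^{+}})=1_{X^{+}}$. Hence, by Theorem~\ref{t2-9}, $X^{+}$ equipped with $\rightarrow_{J^{+}}$ is a strong and moreover \emph{weakly closed} category. It has internal propositional structure in the sense of Definition~\ref{t5-1}: being a poset it makes all the coherence diagrams commute automatically, and the required comparison morphisms are the inequalities $(c\rightarrow_{J^{+}}a)\wedge(c\rightarrow_{J^{+}}b)\le c\rightarrow_{J^{+}}(a\wedge b)$, $(a\rightarrow_{J^{+}}c)\wedge(b\rightarrow_{J^{+}}c)\le(a\vee b)\rightarrow_{J^{+}}c$, $c\rightarrow_{J^{+}}1=1$ and $0\rightarrow_{J^{+}}c=1$, each of which follows from monotonicity of $J^{+}$, Theorem~\ref{t2-5}, and frame distributivity.

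It remains to transport the refutation. Let $V^{+}$ be the valuation on $(X^{+},J^{+})$ with $V^{+}(p)=V(p)$ (which on propositional formulas coincides with the Definition~\ref{t5-1}/Definition~\ref{t4-1} extension), and let $r:X^{+}\to X$ be the retraction $r(\ast)=1_{X}$, $r|_{X}=\mathrm{id}_{X}$. A direct case analysis shows that $r$ is monotone and commutes with $\wedge$, $\vee$, $\top$, $\bot$ and with implication, in the sense that $r(a\rightarrow_{J^{+}}b)=r(a)\rightarrow_{J}r(b)$. Since the formulas in $\Gamma\cup\{A\}$ use only these connectives, $r(V^{+}(C))=V(C)$ for each of them; hence $V^{+}(\bigwedge\Gamma)\le V^{+}(A)$ would give, applying the monotone map $r$, $\bigwedge_{\gamma}V(\gamma)\le V(A)$, contradicting the choice of $(X,J,V)$. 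Therefore there is no morphism $\prod_{\gamma}V^{+}(\gamma)\to V^{+}(A)$ in $X^{+}$, i.e. this weakly closed category with internal propositional structure does not model $\Gamma\Rightarrow A$, which is exactly what contraposition of $(i)$ requires.

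For $(ii)$ one starts instead from a refuting $(X,J)\in\mathbf{TS}$ (Theorem~\ref{t4-12}$(iv)$); the same surgery preserves temporality ($J^{+}(a)\le a$ for $a\in X$ and $\ast\le\ast$), so by Theorem~\ref{t2-9}$(iii)$ the resulting category is also a \emph{Curry} category, hence Curry weakly closed, and the rest goes through verbatim. For $(iii)$ a refuting $(X,J)\in\mathbf{S}$ already has $J=\mathrm{id}$, so $\rightarrow_{J}$ is just the Heyting implication of the locale $X$; then $[1,X]=1\rightarrow_{J}X=X$ and $i_{X}=\mathrm{id}$ is a natural isomorphism, so $X$ is a \emph{closed} category with internal propositional structure already refuting $\Gamma\Rightarrow A$ (adjoining $\ast$, now with $J^{+}=\mathrm{id}$, is optional and changes nothing).

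The main obstacle is precisely the step ``strong $\leadsto$ weakly closed''. No proof calculus built directly over $\mathbf{KPC}$ or $\mathbf{BPC}$ is weakly closed on the nose, since the deduction theorem is recovered only in the presence of $cur$, and fully only in $\mathbf{IPC}$ (cf.\ Remark~\ref{t4-7}); thus one cannot simply use a Lindenbaum-type category. The fresh-top surgery is the device that forces $J(1)=1$ — and hence weak closedness — while remaining harmless with respect to the refuted sequent, and the one computation that genuinely has to be carried out is that the retraction $r$ commutes with $\rightarrow_{J^{+}}$.
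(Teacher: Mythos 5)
Your argument is correct, and it secures the same key point as the paper's proof --- forcing $J(1)=1$, and hence weak closedness, by adjoining one harmless element to a countermodel --- but it implements this at a different level. The paper argues via Kripke completeness of $\mathbf{KPC}$, $\mathbf{BPC}$ and $\mathbf{IPC}$: it takes a Kripke countermodel, adds a single reflexive node below all existing nodes (so that every node has a predecessor, which is exactly $J(1)=1$ in the induced modal spaces of Examples \ref{t2-3} and \ref{t2-4}), notes that truth at the old nodes is unaffected because truth only looks at successors, and concludes that the induced modal space is a weakly closed (respectively Curry or closed) category still refuting $\Gamma \Rightarrow A$. You instead start from the topological completeness already recorded in Theorem \ref{t4-12}, adjoin a fresh top $\ast$ to the refuting modal space with $J^{+}(\ast)=\ast$, and transport the refutation back along the retraction $r$; the computation $r(a\rightarrow_{J^{+}}b)=r(a)\rightarrow_{J}r(b)$ that you single out is indeed the only step needing care, and it does hold (the new top enters $a\rightarrow_{J^{+}}b$ only when $a\le b$, in which case $a\rightarrow_{J}b=1_{X}$ already), as do the checks that $X^{+}$ is a locale, that $J^{+}$ preserves joins and temporality, and that the internal propositional structure is available in any modal space. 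In the temporal cases your surgery is essentially the localic shadow of the paper's: the up-sets of a frame enlarged by a reflexive root are exactly the old up-sets together with one new top. What your route buys is independence from the Kripke analysis (you only invoke Theorem \ref{t4-12}), at the price of the explicit localic verifications; the paper's route gets the transport of the countermodel for free but silently re-enters the Kripke-model machinery of Theorem \ref{t4-10}. Your further observation that for $\mathbf{IPC}$ no surgery is needed, since on $\mathbf{S}$-spaces $J=\mathrm{id}$ and the Heyting locale is already a closed category, is a small simplification over the paper's uniform treatment.
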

\begin{proof}
Notice that any modal space with its arrow is a strong category and it is weakly-closed if $J(1)=1$. On the other hand, since the modal spaces constructed from Kripke models are enough to prove completeness, and since in these models we have $J(1)=1$ if the relation $R$ has the property that before any node, there should be another node, it is enough to prove that these logics are complete with respect to their Kripke models with this additional condition. This is obviously the case, because for any corresponding Kripke frame, it is enough to add one reflexive node under all the nodes of the Kripke model. This new model is a model with the condition and furthermore, truth in all the nodes of this model implies the truth in all the nodes of the first model. 
\end{proof}

\section{Modal Topoi and Modal Lambda Calculus}
In the previous sections, we developed the concept of a modal space and its canonical logic. In this section we want to extend these investigations to the higher and more structured level of generalized topological spaces as topoi and generalized logics as modal simply typed lambda calculus. To begin, let us define the notion of weakly Cartesian closed category as the weak version of the usual Cartesian closed categories.
\begin{dfn}\label{t6-1}
A pair $(\mathcal{C}, J)$ is called a weakly Cartesian closed category, wcc, if $\mathcal{C}$ is a category with all finite limits and $J: \mathcal{C} \to \mathcal{C}$ is a functor such that there exists a functor $\rightarrow : \mathcal{C}^{op} \times \mathcal{C} \to \mathcal{C}$ in a way that
\[
\Hom(JC \times A, B) \simeq \Hom(C, A \rightarrow B)
\]
naturally in $A$, $B$ and $C$.\\
Moreover, if there exists a natural transformation $\pi_A: JA \to A$, the category is called temporal and if there exists a natural transformation $\sigma_A \to JA $, it is called cotemporal.
\end{dfn}
Just like the situation in Cartesian closed categories, it is possible to define weak exponential objects by universal morphisms:
\begin{thm}\label{t6-2}
A pair $(\mathcal{C}, J)$ is a weakly Cartesian closed category if it has all finite limits and satisfies the following property: For any two objects $A, B$ there exist an objects $A \rightarrow B$ and a morphism $ev: J(A \rightarrow B) \times A \to B$ such that for any object $C$ and any morphism $f: JC \times A \to B$ there is a unique morphism $\lambda f: C \to (A \to B)$ such that $ ev(J \lambda f \times id)=f$.
\end{thm}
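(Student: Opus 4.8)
The plan is to show the equivalence of the two formulations of weakly Cartesian closedness by the same argument that turns a natural iso $\Hom(JC\times A,B)\simeq\Hom(C,A\rightarrow B)$ into a universal arrow, adapting the classical Cartesian-closed case to the presence of $J$. I would prove both implications. For the forward direction, suppose $(\mathcal{C},J)$ is wcc in the sense of Definition~\ref{t6-1}. Fix $A,B$ and let $\theta_C:\Hom(JC\times A,B)\xrightarrow{\ \sim\ }\Hom(C,A\rightarrow B)$ be the given natural bijection. Define $ev:=\theta^{-1}_{A\rightarrow B}(\mathrm{id}_{A\rightarrow B}):J(A\rightarrow B)\times A\to B$. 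Then for $f:JC\times A\to B$ set $\lambda f:=\theta_C(f):C\to(A\rightarrow B)$. The computation $ev\circ(J\lambda f\times\mathrm{id}_A)=f$ is exactly naturality of $\theta$ in the variable $C$ applied along the morphism $\lambda f$: naturality gives a commuting square relating $\theta_C$ and $\theta_{A\rightarrow B}$, and chasing $\mathrm{id}_{A\rightarrow B}$ around it yields the identity. Uniqueness of $\lambda f$ follows because $\theta_C$ is a bijection: if $g:C\to(A\rightarrow B)$ also satisfies $ev\circ(Jg\times\mathrm{id})=f$, the same naturality chase shows $\theta_C^{-1}(g)=f$, hence $g=\theta_C(f)=\lambda f$.

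**For the converse,** assume the universal-morphism property. Define, for each $C$, the map $\Phi_C:\Hom(JC\times A,B)\to\Hom(C,A\rightarrow B)$ by $f\mapsto\lambda f$; its two-sidedness is immediate — $\Phi_C$ is injective because $\lambda f$ is determined by $f$ via $ev\circ(J\lambda f\times\mathrm{id})=f$, and surjective because given $g:C\to(A\rightarrow B)$ the morphism $f:=ev\circ(Jg\times\mathrm{id})$ has $g$ as a solution to its universal problem, so by uniqueness $g=\lambda f=\Phi_C(f)$. It remains to upgrade $\Phi$ to a natural isomorphism in all three variables $A,B,C$; in particular one must first define $\rightarrow$ as a functor $\mathcal{C}^{op}\times\mathcal{C}\to\mathcal{C}$. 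On morphisms, given $a:A'\to A$ and $b:B\to B'$ one sets $(a\rightarrow b):=\lambda\bigl(b\circ ev\circ(\mathrm{id}_{J(A\rightarrow B)}\times a)\bigr)$, and functoriality (preservation of identities and composites) is checked by the uniqueness clause, exactly as in the Cartesian-closed proof. Naturality of $\Phi$ in $C$ is the statement $\lambda\bigl(f\circ(Jc\times\mathrm{id})\bigr)=\lambda f\circ c$ for $c:C'\to C$, again verified by applying $ev\circ(J(-)\times\mathrm{id})$ to both sides and invoking uniqueness — here the functoriality of $J$ (so that $J(\lambda f\circ c)=J\lambda f\circ Jc$) is exactly what makes the diagram close. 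Naturality in $A$ and $B$ is similar, using the definition of $a\rightarrow b$.

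**The main obstacle** I anticipate is purely bookkeeping rather than conceptual: one has to be careful that $J$ is only assumed to be a functor, so every manipulation that pushes a morphism through $J$ must respect $J$'s functoriality and nothing more — in particular one cannot assume $J$ preserves products, so the notation $J\lambda f\times\mathrm{id}$ must always be read as $(J\lambda f)\times\mathrm{id}_A$ with the product formed in $\mathcal{C}$, and the canonical comparison issues that would arise if one wanted $J(C\times A)\cong JC\times JA$ never enter because the adjunction is deliberately set up with $JC\times A$ and not $J(C\times A)$. Modulo this care, every verification reduces to the uniqueness half of the universal property, so I would organize the write-up as: (1) forward direction via $ev:=\theta^{-1}(\mathrm{id})$; (2) converse, first defining $\rightarrow$ on objects and morphisms and checking functoriality; (3) the bijection $\Phi_C$; (4) naturality in $C$, then in $A$ and $B$. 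The only genuinely new ingredient compared to the classical theorem is tracking the functor $J$ through the naturality squares, and that is routine.
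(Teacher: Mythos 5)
Your proposal is correct and follows exactly the route the paper intends: its proof of Theorem \ref{t6-2} simply says it is ``similar to the usual proof for the usual exponential objects,'' i.e.\ the classical correspondence between a natural isomorphism $\Hom(JC\times A,B)\simeq\Hom(C,A\rightarrow B)$ and the universal morphism $ev$, which is precisely what you spell out (including the only nonclassical point, tracking the functor $J$ through the naturality squares). The extra detail you supply — defining $\rightarrow$ on morphisms and checking functoriality and naturality via uniqueness — is the content the paper leaves implicit, so there is nothing to correct.
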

\begin{proof}
It is similar to the usual proof for the usual exponential objects.
\end{proof}
\begin{exam}\label{t6-3}
All modal spaces are weakly Cartesian closed. All Cartesian closed categories are also wcc.
\end{exam}

\begin{thm}\label{t6-4}
Let $(\mathcal{C}, J)$ be a weakly Cartesian closed category, then
\begin{itemize}
\item[$(i)$]
$\mathcal{C}$ equipped with $\rightarrow_J$ is a strong category.
\item[$(ii)$]
$\mathcal{C}$ equipped with $\rightarrow_J$ is weakly closed iff $J(1)\simeq 1$.
\item[$(iii)$]
$\mathcal{C}$ equipped with $\rightarrow_J$ is Curry category iff there exists a natural transformation $\pi : A \to [1, A]$.
\end{itemize}
\end{thm}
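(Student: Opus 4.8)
The plan is to lift the proof of Theorem \ref{t2-9} from locales to arbitrary categories: one replaces the poset adjunction of Theorem \ref{t2-5} by the natural hom-set isomorphism $\Phi\colon\Hom(JC\times A,B)\xrightarrow{\sim}\Hom(C,A\rightarrow_J B)$ supplied by the wcc structure, and reads each inequality ``$x\le y$'' of the old argument as ``there is a chosen arrow $x\to y$''. Two things that were automatic over a locale now require genuine work: a subobject such as $j_X$ or $L$ existed as soon as the defining inequality held, whereas here one must exhibit the arrows; and in a poset all parallel arrows coincide, whereas here the three coherence diagrams of Definition \ref{t1-1} are honest equations that must be verified. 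I expect the latter --- in particular the associativity square for $L$ --- to be the main obstacle, though it is entirely analogous to the corresponding check for ordinary exponentials in a Cartesian closed category.

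For $(i)$: put $[A,B]=A\rightarrow_J B$ with its given functoriality and let $ev_{AB}\colon J[A,B]\times A\to B$ be the evaluation of Theorem \ref{t6-2}, so $ev_{AB}=\Phi^{-1}(\mathrm{id}_{[A,B]})$. Take $j_X\colon 1\to[X,X]$ to be $\Phi$ of the projection $J1\times X\to 1\times X\cong X$, and take $L^X_{YZ}\colon[X,Y]\times[Y,Z]\to[X,Z]$ to be $\Phi$ of the evident arrow $J([X,Y]\times[Y,Z])\times X\to Z$ obtained by applying $J$ to the two projections $p_0,p_1$ of $[X,Y]\times[Y,Z]$, forming $\langle Jp_0,Jp_1\rangle$, re-associating to $(J[X,Y]\times X)\times J[Y,Z]$, applying $ev_{XY}\times\mathrm{id}$, and then $ev_{YZ}$; note that $J$ need not preserve the product $[X,Y]\times[Y,Z]$, so the comparison arrow $\langle Jp_0,Jp_1\rangle$ rather than an isomorphism appears here. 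Naturality of $j$ and $L$ in the displayed variables, and their extranaturality, follow from functoriality of $[-,-]$ and of $J$ together with naturality of $\Phi$ in its three slots. The right-unit, left-unit and associativity diagrams are then verified by transposing along $\Phi$ (a bijection, so it is enough to compare the transposed arrows, which land in $Y$, resp.\ $W$): each reduces to a routine diagram chase among projections and copies of $ev$ --- for the unit laws one uses that $j_X$ transposes to a projection, and for associativity one peels off the two evaluations one at a time using naturality of $\Phi$.

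For $(ii)$: the point is to recognise $\gamma\colon\mathcal{C}(X,Y)\to\mathcal{C}(1,[X,Y])$ as precomposition with the projection $J1\times X\to X$ followed by $\Phi$; this identification is immediate from the definition of $j_X$ and naturality of $\Phi$ in its codomain slot. Hence, if $J1\simeq 1$, the projection $J1\times X\to 1\times X\cong X$ is an isomorphism, precomposition with it is a bijection, and $\gamma$ is a bijection, so $\mathcal{C}$ is weakly closed. Conversely, if $\gamma$ is a bijection for all $X,Y$, specialise to $X=1$ and compose $\gamma\colon\mathcal{C}(1,Y)\to\mathcal{C}(1,[1,Y])$ with $\Phi^{-1}$ and the canonical iso $J1\times 1\cong J1$: this yields a bijection $\mathcal{C}(1,Y)\cong\mathcal{C}(J1,Y)$ built from natural maps, hence natural in $Y$, and Yoneda gives $J1\simeq 1$.

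For $(iii)$: by Definition \ref{t1-1} a Curry structure on $(\mathcal{C},\rightarrow_J)$ is exactly a natural transformation $i_X\colon X\to[1,X]=(1\rightarrow_J X)$. Applying $\Phi$ objectwise gives a bijection $\mathcal{C}(X,[1,X])\cong\mathcal{C}(JX\times 1,X)=\mathcal{C}(JX,X)$, natural in $X$, under which the family $(i_X)_X$ corresponds to a family $(\pi_X\colon JX\to X)_X$; a short calculation using naturality of $\Phi$ shows that $(i_X)_X$ is natural as a transformation $\mathrm{Id}\Rightarrow[1,-]$ if and only if $(\pi_X)_X$ is natural as a transformation $J\Rightarrow\mathrm{Id}$, i.e.\ if and only if $(\mathcal{C},J)$ is temporal in the sense of Definition \ref{t6-1}. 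This is the exact categorical counterpart of the equivalence ``Curry $\Leftrightarrow Ja\le a$'' in Theorem \ref{t2-9}$(iii)$.
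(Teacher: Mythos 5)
Your proposal is correct and follows essentially the same route as the paper: the paper likewise takes $j_A$ to be the transpose $\lambda p_1$ of the projection $J1\times A\to A$ and $L$ to be $\lambda\, ev_{B,C}(Jp_1, ev_{A,B}(Jp_0\times id))$, leaves the coherence diagrams of Definition \ref{t1-1} as a routine check, and treats $(ii)$ and $(iii)$ as the direct categorical analogues of Theorem \ref{t2-9}. Your reading of $(iii)$ as the equivalence with temporality (a natural transformation $JA\to A$, obtained by transposing $i_X$ along $\Phi$ and checking that naturality is preserved in both directions) is exactly the intended content there.
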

\begin{proof}
For $(i)$, define $j_A: 1 \to (A \rightarrow_J A)$ as $\lambda p_1$ and 
\[
L: ((A \rightarrow_J B)\times (B \to_J C)) \to (A \to_J C)
\] 
as $\lambda ev_{B, C}(Jp_1, ev_{A, B}(Jp_0 \times id))$. It is not hard to prove that all the conditions of the Definition \ref{t1-1}, hold. $(ii)$ and $(iii)$ are similar.
\end{proof}
Now it is time to lift the idea of the beginning of the third section from the level of propositions and truth values to the level of types and constructions. To do so, it is natural to replace the notion of space by its generalized version of Grothendieck topoi and then adding a functor $J$ to import the needed part of the notion of time to implement the lifted version of predicative implications which are function spaces now. It is also possible to interpret the functor $J$ as the higher order version of the Lawvere-Tierny topology which lifts that morphism from the level of subobjects and inclusion to the level of objects and morphisms. 
\begin{dfn}\label{t6-5}
A pair $(\mathcal{E}, J)$ is called a modal topos if $\mathcal{E}$ is a Grothendieck topos and $J: \mathcal{E} \to \mathcal{E}$ is a colimit preserving functor. Moreover, if there exists a natural transformation $\pi_A: JA \to A$, the modal topos is called temporal and if there exists a natural transformation $\sigma_A \to JA $, it is called cotemporal.
\end{dfn}
\begin{exam}\label{t6-6}
Assume that $\mathcal{E}$ is a Grothendieck topos and $f^* \dashv f_*: \mathcal{E} \to \mathcal{E}$ is a geometric morphism, then $(\mathcal{E}, f^*)$ is a modal topos. The only thing that we have to check is the colimit preserving condition of $f^*$ which is evident by the fact that it is a left adjoint.  
\end{exam}
\begin{exam}\label{t6-7}
Assume that $\mathbb{C}$ is a small category and $J: \mathbb{C} \to \mathbb{C}$ is a functor, then $(\Set^{\mathbb{C}^{op}}, J^*)$ where $J^*(F)=F \circ J $ is a modal topos. As a concrete example of such a modal topos, assume that $\mathbb{C}=(\mathbb{N}, \leq)$ and $J: \mathbb{N} \to \mathbb{N}$ as $J(n)=J(n\dot{-}1)$. Then the modal topos of presheaves over $\mathbb{N}$ is actually the space of variable sets (constructions) on which $J^*$ acts as a pulling back operator to pull back a variable construction one level on the line of time. 
\end{exam}
Just like modal spaces, modal topoi have a natural weak exponential object or as it seems natural to say, a predicative function space.
\begin{thm}\label{t6-8}
Let $(\mathcal{E}, J)$ be a modal topos. Then there exists a functor $[-, -]: \mathcal{E}^{op} \times \mathcal{E} \to \mathcal{E}$ such that 
\[
\Hom(JC \times A, B) \simeq \Hom(C, [A, B])
\]
naturally in $A$, $B$ and $C$. Hence, any modal topos is a weakly Cartesian closed category.
\end{thm}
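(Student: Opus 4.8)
The plan is to build $[A,B]$ by composing the ordinary internal hom of the topos with a right adjoint of $J$. First recall that a Grothendieck topos $\mathcal{E}$ is in particular Cartesian closed: for each object $A$ the functor $(-) \times A$ has a right adjoint $(-)^A$, and these assemble into a functor $\exp : \mathcal{E}^{op} \times \mathcal{E} \to \mathcal{E}$, $(A,B) \mapsto B^A$, equipped with an isomorphism $\Hom(C \times A, B) \simeq \Hom(C, B^A)$ natural in all three variables.

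Second — and this is the crucial input — I would produce a right adjoint to $J$. Since $\mathcal{E}$ is a Grothendieck topos it is locally presentable, and by hypothesis $J$ preserves all small colimits. By the adjoint functor theorem for locally presentable categories (equivalently, by the special adjoint functor theorem, using that a Grothendieck topos is complete, well-powered, and admits a small cogenerating set), any colimit-preserving endofunctor of $\mathcal{E}$ has a right adjoint; call it $J_{*}$, so that there is an isomorphism $\Hom(JC, D) \simeq \Hom(C, J_{*}D)$ natural in $C$ and $D$.

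Third, define $[A,B] = J_{*}(B^A)$. This is functorial in $A$ and $B$, being the composite $\mathcal{E}^{op} \times \mathcal{E} \xrightarrow{\exp} \mathcal{E} \xrightarrow{J_{*}} \mathcal{E}$. Chaining the two adjunctions then gives, for all $A,B,C$,
\[
\Hom(JC \times A, B) \;\simeq\; \Hom(JC, B^A) \;\simeq\; \Hom(C, J_{*}(B^A)) \;=\; \Hom(C, [A,B]),
\]
where the first isomorphism is the exponential adjunction applied with $JC$ in place of $C$ and the second is the adjunction $J \dashv J_{*}$; naturality in $A$, $B$, and $C$ is inherited from the naturality of these two isomorphisms. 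Finally, since a Grothendieck topos has all finite limits, Definition \ref{t6-1} is satisfied and $(\mathcal{E}, J)$ is a weakly Cartesian closed category.

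The only nontrivial step is the existence of the right adjoint $J_{*}$, and this is precisely where the Grothendieck (rather than merely elementary) topos hypothesis is used: local presentability is what lets one upgrade ``$J$ preserves colimits'' to ``$J$ is a left adjoint,'' an implication that can fail in an arbitrary cocomplete category. Everything else is a formal manipulation of adjunctions.
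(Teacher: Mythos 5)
Your proposal is correct and follows essentially the same route as the paper: the paper also obtains a right adjoint $I$ to $J$ from the adjoint functor theorem (using that $J$ preserves colimits and $\mathcal{E}$ is a Grothendieck topos), sets $[A,B]=I(B^A)$, and chains the exponential adjunction with $J \dashv I$. Your additional remarks on local presentability just make explicit the hypothesis the paper invokes implicitly.
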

\begin{proof}
Since $\mathcal{E}$ is a Grothendieck topos, and $J: \mathcal{E} \to \mathcal{E}$ is a colimit preserving functor, by the adjoint functor theorem, it has a right adjoint $I: \mathcal{E} \to \mathcal{E}$. Now, define $[A,B]=I(B^A)$. This funcor has the property because
\[
\Hom(JC \times A, B) \simeq \Hom(JC, B^A) \simeq \Hom(C, I(B^A))=\Hom(C, [A, B]).
\]
\end{proof}
\begin{thm}\label{t6-9}
Let $(\mathcal{F}, J)$ be a modal topos, $\mathcal{E}$ be a Grothendieck topos and $f=(f_* \dashv f^*): \mathcal{E} \to \mathcal{F}$ be a connected geometric morphism. Then if $f^*$ has a left adjoint, then there exists a functor $I: \mathcal{E}^{op} \times \mathcal{E} \to \mathcal{E}$ such that $(\mathcal{E}, I)$ is a modal topos and $f^*(A \to_J B)\simeq f^*(A) \to_I f^*(B)$ natural in $A$ and $B$.
\end{thm}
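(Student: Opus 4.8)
The plan is to transport the proof of Theorem~\ref{t2-6} from locales to topoi, with ``$f$ is a continuous surjection'' replaced by ``$f$ is connected''. Write $f_!$ for the left adjoint of $f^*$ supplied by the hypothesis, so that we have a chain of adjunctions $f_! \dashv f^* \dashv f_*$, and set
\[
I \;=\; f^* \circ J \circ f_! \;:\; \mathcal{E} \to \mathcal{E}.
\]
The functor $f_!$ preserves all colimits since it is a left adjoint, $J$ preserves all colimits because $(\mathcal{F},J)$ is a modal topos, and $f^*$ preserves all colimits because the inverse image part of a geometric morphism is itself a left adjoint; hence $I$ preserves all colimits and $(\mathcal{E},I)$ is a modal topos. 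By Theorem~\ref{t6-8} it therefore carries its canonical predicative function space $\to_I$, characterized by $\Hom_{\mathcal{E}}(IC \times P,\, Q) \simeq \Hom_{\mathcal{E}}(C,\, P \to_I Q)$ naturally in $C$, $P$ and $Q$.

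It then remains to produce a natural isomorphism $f^*(A \to_J B) \simeq f^*(A) \to_I f^*(B)$, which I would obtain by the Yoneda lemma after computing $\Hom_{\mathcal{E}}(C,-)$ of both sides and chaining the available adjunctions. On one side,
\[
\Hom_{\mathcal{E}}(C,\, f^*(A \to_J B)) \;\simeq\; \Hom_{\mathcal{F}}(f_!C,\, A \to_J B) \;\simeq\; \Hom_{\mathcal{F}}(J(f_!C) \times A,\, B),
\]
using $f_! \dashv f^*$ and then the defining adjunction of $\to_J$ in the modal topos $(\mathcal{F},J)$. On the other side, unfolding $\to_I$ and the definition of $I$,
\[
\Hom_{\mathcal{E}}(C,\, f^*(A) \to_I f^*(B)) \;\simeq\; \Hom_{\mathcal{E}}(IC \times f^*A,\, f^*B) \;=\; \Hom_{\mathcal{E}}(f^*(Jf_!C) \times f^*A,\, f^*B),
\]
and, since $f^*$ (an inverse image functor) preserves finite products and, by connectedness of $f$, is full and faithful,
\[
\Hom_{\mathcal{E}}(f^*(Jf_!C) \times f^*A,\, f^*B) \;\simeq\; \Hom_{\mathcal{E}}(f^*(Jf_!C \times A),\, f^*B) \;\simeq\; \Hom_{\mathcal{F}}(Jf_!C \times A,\, B).
\]
Both $\Hom$-functors in $C$ are thus naturally isomorphic, and Yoneda delivers the isomorphism of objects; to get naturality in $A$ and $B$ as well, one reads both chains as isomorphisms of functors $\mathcal{E}^{op}\times\mathcal{F}^{op}\times\mathcal{F}\to\Set$, since each individual isomorphism used is natural in all three arguments.

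I do not expect a genuine obstacle here: the argument is bookkeeping with the three adjunctions and the left-exactness of $f^*$. The point that needs care is to keep the two adjunctions involving $f^*$ apart: $f^*$ is a left adjoint (of $f_*$) because $f$ is a geometric morphism, which is exactly what makes $I$ colimit-preserving, while separately $f^*$ has a left adjoint $f_!$, which both defines $I$ and opens the computation on the left-hand side. Equally, the connectedness hypothesis is calibrated precisely so that the comparison $\Hom_{\mathcal{E}}(f^*P,\, f^*Q) \simeq \Hom_{\mathcal{F}}(P,\, Q)$ is a bijection and not merely an injection --- this is the exact topos-theoretic counterpart of the use of ``$f^{-1}$ is one-to-one'' in the proof of Theorem~\ref{t2-6}.
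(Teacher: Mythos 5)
Your proposal is correct and follows essentially the same route as the paper: define $I=f^*Jf_!$, and chain the adjunction $f_!\dashv f^*$, the defining adjunction of $\to_J$, preservation of products by $f^*$, and full faithfulness of $f^*$ (connectedness) to identify the two $\Hom$-functors, then conclude by Yoneda. Your explicit check that $I$ preserves colimits and your remarks on naturality are slightly more careful than the paper's write-up, but the argument is the same.
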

\begin{proof}
Define $I=f^*Jf_!$. First of all, it is clear that 
\[
\Hom(C, f^*(A) \to_I f^*(B)) \simeq \Hom(IC \times f^*(A), f^*(B)).
\]
Then, we have
\[
\Hom(IC \times f^*(A), f^*(B)) \simeq \Hom(f^*Jf_!C \times f^*(A), f^*(B)).
\] 
Since $f^*$ is a right adjoint, it preserves limit, hence 
\[
\Hom(IC \times f^*(A), f^*(B)) \simeq \Hom(f^*(Jf_!C \times A), f^*(B)).
\]
But $f$ is connected, which means that $f^*$ is full and faithful, hence
\[
\Hom(IC \times f^*(A), f^*(B)) \simeq \Hom(Jf_!C \times A), B),
\]
and then
\[
\Hom(IC \times f^*(A), f^*(B)) \simeq \Hom(f_!C, A \rightarrow_J B).
\]
Since $f_! \dashv f^*$, we have
\[
\Hom(IC \times f^*(A), f^*(B)) \simeq \Hom(C, f^*(A \rightarrow_J B)).
\]
Therefore,
\[
\Hom(C, f^*(A) \to_I f^*(B)) \simeq \Hom(C, f^*(A \rightarrow_J B)).
\]
\end{proof}

Now, let us define the right higher order language to describe weakly Cartesian closed categories and consequently modal topoi to some extent. This language, is an appropriate modal version of simply typed lambda calculus. \\

By modal lambda calculus we mean the following system: The type constructors are $\times$, $+$, $0$, $1$, $J$, $\rightarrow$. The terms constructors are $\langle \cdot, \cdot \rangle$, $p_0$, $p_1$, $r$, $l$, $d$, $*$, $!$, $j$, $\lambda$, $ap$.
We now begin to build up a system of rules.\\

\begin{flushleft}
 		\textbf{Structural Rules:}
\end{flushleft}
\begin{center}
  	\begin{tabular}{c c}
		\AxiomC{$x: A \vdash t(x): B$}
		\LeftLabel{\tiny{$F$}}
		\UnaryInfC{$y: JA \vdash [jt(x)](y): JB$}
		\DisplayProof
		&
		\AxiomC{$\vec{x}: \Gamma \vdash t(\vec{x}): A$}
		\AxiomC{$\vec{z}: \Delta, y:A \vdash s(\vec{z}, y): B$}
		\LeftLabel{\tiny{$cut$}}
		\BinaryInfC{$\vec{x}: \Gamma, \vec{z}: \Delta \vdash s(\vec{z}, t(\vec{x})): B$}
		\DisplayProof
		
		\end{tabular}
\end{center}

\begin{flushleft}
  		\textbf{Propositional Rules:}
\end{flushleft}
\begin{center}
  	\begin{tabular}{c c}
  	
  	    \AxiomC{$\vec{x}: \Gamma \vdash t(\vec{x}): \bot$}
		\LeftLabel{\tiny{$\bot$}}
		\UnaryInfC{$\vec{x}: \Gamma \vdash !t(\vec{x}): A$}
		\DisplayProof
		&
		\AxiomC{$ $}
		\LeftLabel{\tiny{$\top$}}
		\UnaryInfC{$\vec{x}: \Gamma \vdash *: \top$}
		\DisplayProof
       	\\[3 ex]
   		\AxiomC{$\vec{x}: \Gamma \vdash t(\vec{x}): A_0 \times A_1  $}
   		\RightLabel{{\tiny $ (i=0, 1) $}} 
   		\LeftLabel{{\tiny $\times E$}}  		
   		\UnaryInfC{$\vec{x}: \Gamma \vdash p_i(t(\vec{x})): A_i $}
   		\DisplayProof
	   		&
   		\AxiomC{$\vec{x}: \Gamma  \vdash t(\vec{x}): A$}
   		\AxiomC{$\vec{y}: \Delta  \vdash s(\vec{y}): B$}
   		\LeftLabel{{\tiny $\times I$}} 
   		\BinaryInfC{$\vec{x}: \Gamma, \vec{y}: \Delta \vdash \langle t(\vec{x}), s(\vec{y}) \rangle: A \times B $}
   		\DisplayProof
   		\\[3 ex]
   		\AxiomC{$\vec{x}: \Gamma \vdash t(\vec{x}): A$}
   		\LeftLabel{{\tiny $+ I$}} 
   		\UnaryInfC{$\vec{x}: \Gamma \vdash l(t(\vec{x})): A + B$}
   		\DisplayProof
   		&
   		\AxiomC{$\vec{x}: \Gamma \vdash t(\vec{x}): B$}
   		\LeftLabel{{\tiny $+ I$}} 
   		\UnaryInfC{$\vec{x}: \Gamma \vdash r(t(\vec{x})): A + B$}
   		\DisplayProof
	   		
	\end{tabular}
\end{center}

\begin{center}
  	\begin{tabular}{c}
		\AxiomC{$\vec{x}: \Gamma, a: A  \vdash t(\vec{x}, a): C $}
  		\AxiomC{$\vec{y}: \Delta, b: B  \vdash s(\vec{y}, b): C$}
  		\LeftLabel{{\tiny $+ E$}} 
  		\BinaryInfC{$ \vec{x}: \Gamma, \vec{y}: \Delta, e: A + B \vdash d(a,b; t(\vec{x},a), s(\vec{y}, b), e): C$}
  		\DisplayProof
		\end{tabular}
\end{center}

\begin{center}
  	\begin{tabular}{c}
   		\AxiomC{$x: JC, a: A \vdash t(x, a): B $}
   		\LeftLabel{{\tiny $\rightarrow I$}} 
   		\UnaryInfC{$y: C \vdash [\lambda a.t(x, a)](y) :A \rightarrow B$}
   		\DisplayProof
   		\\[3 ex]
		\end{tabular}
\end{center}

\begin{center}
  	\begin{tabular}{c}
		\AxiomC{$\vec{x}: \Gamma \vdash t(\vec{x}): A $}
  		\AxiomC{$\vec{y}: \Delta \vdash s(\vec{y}): J(A \rightarrow B)$}
  		\LeftLabel{{\tiny $\rightarrow E$}} 
   		\BinaryInfC{$\vec{x}: \Gamma, \vec{y}: \Delta \vdash ap(s(\vec{y}), t(\vec{x})): B$}
   		\DisplayProof
		\end{tabular}
\end{center}

\begin{flushleft}
 		\textbf{Additional Rules:}
\end{flushleft}
\begin{center}
  	\begin{tabular}{c c}
	
		\AxiomC{$\vec{x}: \Gamma \vdash t(\vec{x}): JA$}
		\LeftLabel{\tiny{$J$}}
		\UnaryInfC{$\vec{x}: \Gamma \vdash \pi(t(\vec{x})): A$}
		\DisplayProof
		&
		\AxiomC{$\vec{x}: \Gamma \vdash t(\vec{x}) :A$}
		\LeftLabel{\tiny{$CoJ$}}
		\UnaryInfC{$\vec{x}: \Gamma \vdash \sigma(t(\vec{x})): JA$}
		\DisplayProof
		\end{tabular}
\end{center}
Note that by substitution we mean the usual natural recursive definition. However, it is important to point out that in our system and in terms $[Jt(x)](y)$ and $[\lambda z. t(z, x)](y)$, the variable $x$ is not considered free and the only free variable in these terms is $y$. Therefore, we can just substitute $y$ by some term $s$, and the result of the substitution is $[Jt(x)](s)$ and $[\lambda z. t(z, x)](s)$, respectively.\\
 
Finally, the equality rules ($\beta$ and $\eta$ rules) consist of reflexivity, symmetry, transitivity, being closed under substitution and also all the following rules:\\
\\
For $\top$ and $\bot$ we have:\\
\\
$t=*$, for $t: \top$.\\
$t(x)=!x$ for $ t(x:\bot): A$.\\
\\
For $\times$ we have\\
\\
$p_0(\langle x, y \rangle)=x$ and $p_1(\langle x, y \rangle)=y$.\\
$\langle p_0(x), p_1(x) \rangle=x$.\\
\\
For $+$ we have:\\
\\
$d(a, b;t(\vec{x}, a), s(\vec{y}, b), l(c))=t(\vec{x}, c) $.\\
$d(a, b;t(\vec{x}, a), s(\vec{y}, b), r(c))=s(\vec{y}, c) $.\\
$d(a, b;t(\vec{x}, l(a)), t(\vec{x}, r(b)), e)=t(\vec{x}, e) $.\\
\\
For $\rightarrow$ we have:\\
\\
$ap([j[\lambda y. t(x, y)]](x), y)=t(x, y) $.\\
$[\lambda y.ap([jt(x)], y)](x)=t(x) $.\\
\\
For $J$ we have:\\
\\
$[jt]([js](z))=[jt(s)](z) $.\\
$[jx](y)=y$.\\
\\
And in the presence of $\pi$ or $\sigma$:\\
\\
$t(\pi(s))=\pi([Jt](s)) $.\\
$\sigma(t(s))=[jt](\sigma(s)) $.\\

The system of all the rules is called modal lambda calculus and is denoted by $ mJ\lambda$. In the presence of $J$ or $CoJ$ we denote the system by $J \lambda$ and $CoJ\lambda$ and we call them temporal and cotemporal, respectively. In the presence of both, denote it by $I\lambda$.
\begin{thm}\label{t6-10}(Soundness)
\begin{itemize}
\item[$(i)$]
$mJ\lambda$ is interpretable in any weakly Cartesian closed category. Specifically, it is interpretable in any modal topos.
\item[$(ii)$]
$J\lambda$ is interpretable in any temporal weakly Cartesian closed category. Specifically, it is interpretable in any temporal topos.
\item[$(iii)$]
$CoJ\lambda$ is interpretable in any cotemporal weakly Cartesian closed category. Specifically, it is interpretable in any cotemporal topos.
\end{itemize}
\end{thm}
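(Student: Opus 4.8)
The plan is to build the standard categorical interpretation and then to check, by induction on derivations, that every rule is sound and every equation of the calculus holds as an equation of morphisms. Fix a weakly Cartesian closed category $(\mathcal{C}, J)$ (and assume, as is automatic in a modal topos, that $\mathcal{C}$ also has finite coproducts and an initial object, needed to interpret $+$ and $0$). First I would interpret each type $A$ as an object $[\![A]\!]$ of $\mathcal{C}$ by the obvious recursion: $[\![\top]\!]=1$, $[\![\bot]\!]=0$, $[\![A\times B]\!]=[\![A]\!]\times[\![B]\!]$, $[\![A+B]\!]=[\![A]\!]+[\![B]\!]$, $[\![JA]\!]=J[\![A]\!]$, and $[\![A\rightarrow B]\!]=[\![A]\!]\rightarrow_J[\![B]\!]$, the last being the weak exponential carried by $\mathcal{C}$ as part of Definition \ref{t6-1}. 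A context $\vec x:\Gamma$ with $\Gamma=(A_1,\dots,A_n)$ is interpreted as the finite product $[\![\Gamma]\!]=\prod_i[\![A_i]\!]$, and a derivation of $\vec x:\Gamma\vdash t:A$ as a morphism $[\![t]\!]:[\![\Gamma]\!]\to[\![A]\!]$, defined by recursion on the derivation.

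The clauses for $[\![t]\!]$ are forced by the structure. A variable $x_i:A_i$ in context $\Gamma$ becomes the $i$-th projection; $*$ is the unique map into $1$; $!\,t$ is $[\![t]\!]$ followed by the unique map $0\to[\![A]\!]$; $\langle\cdot,\cdot\rangle$, $p_0$, $p_1$ use the product structure and $l$, $r$, $d$ the coproduct structure; $cut$ is modelled by composition after pairing the projections off the joint context, so that substitution is realised by composition. For $F$, a premise $[\![t]\!]:[\![A]\!]\to[\![B]\!]$ is sent to $J[\![t]\!]:J[\![A]\!]\to J[\![B]\!]$ by functoriality of $J$; this typechecks precisely because, by the binding convention stated after the rules, $x$ is not free in $[jt(x)](y)$, so the only context variable is $y:JA$. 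For $\rightarrow I$, a premise $[\![t]\!]:J[\![C]\!]\times[\![A]\!]\to[\![B]\!]$ is sent to its transpose $[\![C]\!]\to([\![A]\!]\rightarrow_J[\![B]\!])$ under the defining natural isomorphism $\Hom(JC\times A,B)\cong\Hom(C,A\rightarrow_J B)$ of Definition \ref{t6-1}, and again the conclusion context is forced to be just $y:C$ by the binding convention. For $\rightarrow E$, I would use the evaluation morphism $ev:J(A\rightarrow_J B)\times A\to B$ of Theorem \ref{t6-2}: precompose $ev$ with the pairing of $[\![s]\!]$ and $[\![t]\!]$ composed with the two projections off $[\![\Gamma]\!]\times[\![\Delta]\!]$. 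Finally the additional rules $J$ and $CoJ$ are modelled by postcomposition with the natural transformations $\pi_A:JA\to A$ and $\sigma_A:A\to JA$, which exist in the temporal and cotemporal cases.

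To verify the equational theory I would first prove the expected substitution lemma — the interpretation of a term obtained by a substitution equals the corresponding composite of interpretations — by induction on terms; the binding convention makes the cases for $j$ and $\lambda$ immediate, since substitution never reaches under them except into the designated free variable. Given this, every $\beta$/$\eta$-equation of $mJ\lambda$ is an instance of a universal property: the two $\times$-equations are the projection laws together with surjective pairing; the three $+$-equations are the injection laws and uniqueness of the copairing; $t=*$ and $t(x)=\,!x$ are uniqueness of maps into $1$ and out of $0$; the two $\rightarrow$-equations $ap([j[\lambda y.t(x,y)]](x),y)=t(x,y)$ and $[\lambda y.ap([jt(x)],y)](x)=t(x)$ are exactly the two halves of the statement in Theorem \ref{t6-2} that $\lambda f$ is the \emph{unique} morphism satisfying $ev\circ(J\lambda f\times\mathrm{id})=f$; the two $J$-equations $[jt]([js](z))=[jt(s)](z)$ and $[jx](y)=y$ are the composition and identity laws for the functor $J$; and the $\pi$- and $\sigma$-equations are the naturality squares of $\pi$ and $\sigma$. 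Reflexivity, symmetry and transitivity are automatic, and closure under substitution is precisely the substitution lemma. This settles (i), and the clause "specifically, in any modal topos" is immediate from Theorem \ref{t6-8}. For (ii) and (iii) nothing further is needed: a temporal (resp. cotemporal) wcc carries $\pi$ (resp. $\sigma$) by Definition \ref{t6-1}, and a temporal (resp. cotemporal) modal topos is such a wcc, being a wcc by Theorem \ref{t6-8} and carrying $\pi$ (resp. $\sigma$) by Definition \ref{t6-5}.

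The main obstacle is organizational rather than mathematical: one must set up the non-standard binding discipline — that the outer variable in $[jt(x)](y)$ and $[\lambda a.t(x,a)](y)$ is a bound placeholder and only $y$ is free — so that the interpretation of contexts, the restricted-context shapes of $F$ and $\rightarrow I$, and the substitution lemma all fit together; in particular one should check that weakening is admissible and interpreted by composition with a projection, so that these restricted rules suffice in practice. Once this scaffolding is in place the interpretation is forced and each equation collapses to functoriality of $J$, naturality of the currying isomorphism, the universal property already granted by Theorem \ref{t6-2}, or naturality of $\pi$ and $\sigma$, so no genuine difficulty remains beyond Theorem \ref{t6-2} itself, which is assumed.
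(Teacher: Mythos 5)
Your proposal is correct and follows essentially the same route as the paper, which simply says to interpret each type as an object and each term in context $\vec{x}:\Gamma$ as a morphism from $\prod\Gamma$ to the target object and declares the rest clear. Your write-up just makes explicit the details (rule-by-rule clauses, the substitution lemma, and the verification of the $\beta/\eta$-equations via the universal properties and functoriality/naturality) that the paper leaves implicit.
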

\begin{proof}
Interpret any type as an object and any term $t(\vec{x}):A$ where $\vec{x} : \Gamma$ as a morphism from $\prod \Gamma$ to $A$. The rest is clear. 
\end{proof}
\begin{lem}\label{t6-11}(Embedding) 
Let $\mathbb{C}$ be a small category and $J : \mathbb{C} \to \mathbb{C}$ a functor. Then there is a modal topos $(\mathcal{E}, I)$ such that $(\mathbb{C}, J)$ is embeddable in the modal topos $(\mathcal{E}, I)$, i.e., there exists an embedding $e: \mathbb{C} \to \mathcal{E}$ in a way that $e$ preserves limit and $I(e(c))=e(J(c))$ and consequently, $e(c) \rightarrow_{I} e(d) \simeq e(c \to_J d)$.
\end{lem}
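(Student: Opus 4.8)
The plan is to realise $(\mathbb{C},J)$ inside its presheaf topos. Put $\mathcal{E}=\Set^{\mathbb{C}^{op}}$, let $e=\mathbf{y}\colon\mathbb{C}\to\mathcal{E}$ be the Yoneda embedding, and let $I=J_{!}$ be the left adjoint of the precomposition functor $J^{*}\colon\mathcal{E}\to\mathcal{E}$ given by $(J^{*}F)(c)=F(Jc)$ (equivalently, $J_{!}$ is left Kan extension along $J^{op}$, which exists since $\Set$ is cocomplete). Since $\Set^{\mathbb{C}^{op}}$ is a Grothendieck topos and $J_{!}$, being a left adjoint, preserves all colimits, the pair $(\mathcal{E},J_{!})$ is a modal topos in the sense of Definition \ref{t6-5}; this is precisely the general form of the presheaf constructions in Examples \ref{t1-4} and \ref{t6-7}, no longer restricted to $\mathbb{C}=(\mathbb{N},\le)$.

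Next I would verify the two properties of $e$ demanded by the statement. Full faithfulness of $e$ is the Yoneda lemma, and $e$ preserves every limit that exists in $\mathbb{C}$ because $\Hom(x,\lim_{i}c_{i})\simeq\lim_{i}\Hom(x,c_{i})$ naturally in $x$. For the commutation $I\circ e\simeq e\circ J$, observe that for every $G\in\mathcal{E}$ there are natural isomorphisms
\[
\Hom(J_{!}\mathbf{y}(c),G)\simeq\Hom(\mathbf{y}(c),J^{*}G)\simeq(J^{*}G)(c)=G(Jc)\simeq\Hom(\mathbf{y}(Jc),G),
\]
using $J_{!}\dashv J^{*}$ and the Yoneda lemma; by the Yoneda lemma once more this forces $J_{!}(\mathbf{y}(c))\simeq\mathbf{y}(Jc)$, naturally in $c$, i.e.\ $I(e(c))\simeq e(J(c))$. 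I would also remark that any colimit-preserving endofunctor of $\mathcal{E}$ sending $\mathbf{y}(c)$ to $\mathbf{y}(Jc)$ is isomorphic to $J_{!}$, since such functors are determined by their restriction to representables, so the choice of $I$ is essentially forced.

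Finally, for the ``consequently'' clause --- which presupposes that the weak exponential $c\rightarrow_{J}d$ exists in $\mathbb{C}$, e.g.\ when $\mathbb{C}$ is itself weakly Cartesian closed --- I would compute $e(c)\rightarrow_{I}e(d)$ directly in the modal topos. By Theorem \ref{t6-8} the weak exponential of $(\mathcal{E},J_{!})$ is $A\rightarrow_{I}B=R(B^{A})$ where $R$ is the right adjoint of $I$, and here $R=J^{*}$. Hence, evaluating at an object $x$ and using the cartesian closed structure of the presheaf topos together with full faithfulness of $e$ and the fact that $\mathbf{y}$ preserves the product $Jx\times c$,
\[
\bigl(e(c)\rightarrow_{I}e(d)\bigr)(x)=\bigl(\mathbf{y}(d)^{\mathbf{y}(c)}\bigr)(Jx)\simeq\Hom\bigl(\mathbf{y}(Jx)\times\mathbf{y}(c),\mathbf{y}(d)\bigr)\simeq\Hom(Jx\times c,d).
\]
When $c\rightarrow_{J}d$ exists, its defining adjunction turns the right-hand side into $\Hom(x,c\rightarrow_{J}d)=\mathbf{y}(c\rightarrow_{J}d)(x)$, naturally in $x$, so $e(c)\rightarrow_{I}e(d)\simeq e(c\rightarrow_{J}d)$.

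The whole argument should be routine once $I=J_{!}$ has been identified; the one point that genuinely requires care is that $J_{!}$, like any Kan extension, is determined only up to natural isomorphism, so one gets $I\circ e\simeq e\circ J$ rather than a strict equality. This is harmless given the ``$\simeq$'' in the statement, but if strict $I(e(c))=e(J(c))$ were wanted one would have to rectify $J_{!}$ against the chosen isomorphisms.
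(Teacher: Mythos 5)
Your proposal is correct and follows essentially the same route as the paper: both work in the presheaf topos $\Set^{\mathbb{C}^{op}}$ with the Yoneda embedding and lift $\mathbf{y}\circ J$ to a colimit-preserving endofunctor agreeing with $\mathbf{y}(J(-))$ on representables, the paper's $\hat{J}$ being exactly your $J_!$ up to natural isomorphism. You simply make explicit what the paper leaves implicit (the identification of the lift as the left adjoint of $J^*$, and the computation behind ``the second part is clear''), and your remark about strict equality versus isomorphism on representables is a fair, minor caveat.
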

\begin{proof}
Consider $\bar{J}: \mathbb{C} \to \Set^{\mathbb{C}^{op}}$ as the combination of $J$ and Yoneda embedding, i.e., $\bar{J}=y \circ J$. It is possible to lift $\bar{J}$ to the topos $\Set^{\mathbb{C}^{op}}$, i.e., there exists a colimit preserving $\hat{J}: \Set^{\mathbb{C}^{op}} \to \Set^{\mathbb{C}^{op}}$ such that $\hat{J} \circ y=\bar{J}$, hence $\hat{J}(yc)=\bar{J}(c)=y(Jc)$.
The second part is clear from the first part.
\end{proof}
\begin{thm}\label{t6-12}(Completeness)
The syntax of the modal lambda calculus forms a syntactical weakly Cartesian closed category. Therefore, $mJ\lambda$ is complete with respect to all weakly Cartesian closed categories. Moreover, the modal lambda calculus without coproducts and the zero element forms a syntactical modal topos and hence modal topoi are complete for modal lambda calculus without coproducts and the zero element. The same is true in the presence of temporal and cotemporal conditions on both sides.
\end{thm}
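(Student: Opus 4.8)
The plan is to run the standard term-model argument, adapted to the modal setting, and then to bootstrap from a small syntactic category to a genuine topos by invoking Lemma~\ref{t6-11}. First I would let $\mathcal{S}$ be the category whose objects are the types of $mJ\lambda$ and whose morphisms $A\to B$ are the terms $t(x):B$ in the one-variable context $x:A$, taken modulo the $\beta\eta$-equality; composition is substitution $s\circ t:=s(t(x))$ and the identity on $A$ is the variable term $x:A$. That this is a well-defined category is exactly the substitution lemma together with the reflexivity, transitivity, congruence and substitution-closure clauses of the equality rules, and an $n$-ary context $\vec{x}:\Gamma$ is handled by the finite product $\prod\Gamma$. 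Finite products, the terminal object, finite coproducts and the initial object on $\mathcal{S}$ come from the $\times,\top,+,\bot$-rules and their $\beta\eta$-equations; the finite-limit clause of Definition~\ref{t6-1} should be read here as the finite-product clause (which is all the present section ever uses), or else one enlarges the calculus with equality types. This settles the ``forms a syntactical category'' part once the modal and exponential structure is in place.

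Next I would check that structure. The functor $J:\mathcal{S}\to\mathcal{S}$ acts on objects by the type constructor $J$ and on a morphism $t(x):B$ (context $x:A$) by $[jt(x)](y):JB$ (context $y:JA$) via rule $F$; functoriality is precisely the equations $[jx](y)=y$ and $[jt]([js](z))=[jt(s)](z)$. For the weak exponential I would take $A\to B$ on objects, use $\lambda$ (rule $\rightarrow I$) as the candidate bijection $\Hom_{\mathcal{S}}(JC\times A,B)\to\Hom_{\mathcal{S}}(C,A\to B)$, and $g\mapsto ap([jg],\mathrm{id}_A)$ (rule $\rightarrow E$ postcomposed with $F$) as its inverse; the two $\rightarrow$-equations say exactly that these maps are mutually inverse, and naturality in $A$, $B$ and $C$ is a substitution computation. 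So $(\mathcal{S},J)$ is a weakly Cartesian closed category. Taking the tautological valuation $V(A)=A$ turns $\mathcal{S}$ into a generic model: a judgement $\Gamma\vdash A$ is interpreted by a morphism $\prod\Gamma\to A$ which exists in $\mathcal{S}$ iff the judgement is derivable, and provable term-equalities are exactly the equalities of $\mathcal{S}$. Hence anything valid in every wcc is valid in $\mathcal{S}$, hence derivable; with Soundness (Theorem~\ref{t6-10}) this is the stated completeness with respect to all weakly Cartesian closed categories.

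For the topos statement, since $\mathcal{S}$ is small (pass to a skeleton if needed) I would apply Lemma~\ref{t6-11} to the small modal category $(\mathcal{S}_0,J)$, where $\mathcal{S}_0$ is the syntactic category of the fragment without $+$ and $0$: this produces a modal topos $(\mathcal{E},I)$ with a limit-preserving, full and faithful embedding $e:\mathcal{S}_0\to\mathcal{E}$ satisfying $I(e(c))=e(Jc)$ and $e(c\to_J d)\simeq e(c)\to_I e(d)$. Because $e$ preserves products, the terminal object and the weak exponential — which is precisely why coproducts and the initial object must be dropped, Yoneda not preserving colimits — the valuation $c\mapsto e(c)$ interprets the coproduct-and-zero-free fragment in $(\mathcal{E},I)$ faithfully enough to reflect derivability, and combined with the modal-topos half of Theorem~\ref{t6-10} this gives the second completeness statement. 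In the temporal (resp.\ cotemporal) case the term former $\pi$ (resp.\ $\sigma$) supplies a natural transformation $JA\to A$ (resp.\ $A\to JA$) on $\mathcal{S}$, with naturality given by the equations governing $\pi$ and $\sigma$; such a natural transformation on a small category lifts along the colimit-preserving extension of Lemma~\ref{t6-11}, so the whole argument goes through unchanged on both sides.

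The one genuinely delicate step is the naturality of the $\lambda$-bijection in all three variables simultaneously, carried out under the paper's convention that the ``outer'' variable in $[jt(x)](y)$ and $[\lambda z.t(z,x)](y)$ is not free: this restricts which substitutions may legally appear in the naturality squares and has to be tracked with care. A secondary point is confirming that the lifted functor $\hat{J}$ of Lemma~\ref{t6-11} really transports the weak exponential, which holds only after coproducts and the initial object are removed from the fragment. Everything else is routine bookkeeping with the $\beta\eta$-rules.
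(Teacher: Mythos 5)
Your proposal follows essentially the same route as the paper: build the syntactic (term) category of the calculus, check it is weakly Cartesian closed (with $J$ from rule $F$ and the $\lambda$/$ap$ adjunction from the $\rightarrow$-equations), and then, for the coproduct- and zero-free fragment, embed it via Lemma \ref{t6-11} into the presheaf modal topos over it, lifting $\pi$ and $\sigma$ along the Yoneda embedding for the temporal and cotemporal cases. You supply more of the bookkeeping than the paper does (e.g.\ why coproducts and $0$ must be dropped, and the finite-limits-versus-finite-products reading of Definition \ref{t6-1}), but the decomposition and the key lemma are the same, so the argument matches the paper's proof.
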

\begin{proof}
Define $\mathcal{C}(T)$ as the syntactic category of the type theory $T$. It is enough to interpret types as objects and terms as morphisms and the other constructors as their canonical interpretation in the categorical terms. It is clear that $\mathcal{C}(T)$ is a weakly Cartesian closed category presumably with some conditions regarding being temporal or cotemporal. For the second part, since the syntactic category is a small category, we can embed $\mathcal{C}(T)$ in a modal topos $(\Set^{\mathcal{C}(T)^{op}}, \hat{J})$ as in the Lemma \ref{t6-11}. Since $J$ and $\hat{J}$ and their exponentials act similarly relative to the Yoneda embedding, we can conclude that the equality of interpreted morphisms in the modal topos implies the equality of terms in the syntactic category $\mathcal{C}(T)$. For the temporal and cotemporal cases, it is sufficient to pick $\hat{\pi}$ and $\hat{\sigma}$ as $y\pi$ and $y\sigma$.
\end{proof}

\vspace{4pt}
\textbf{Acknowledgment.} 
We would like to thank Mark van Atten for his generosity to send the draft of his paper on the analysis of the impredicativity of intuitionistic implication. Also we are thankful to Raheleh Jalali and Masoud Memarzadeh for the invaluable discussions.

\end{document}